\def\eqref#1{equation~\ref{#1}}
\def\plaineqref#1{\ref{#1}}
\def\1{\bm{1}}
\DeclareMathAlphabet{\mathsfit}{\encodingdefault}{\sfdefault}{m}{sl}
\SetMathAlphabet{\mathsfit}{bold}{\encodingdefault}{\sfdefault}{bx}{n}
\theoremstyle{plain}
\newtheorem{theorem}{Theorem}[section]
\newtheorem{proposition}[theorem]{Proposition}
\newtheorem{lemma}[theorem]{Lemma}
\theoremstyle{definition}
\newtheorem{assumption}[theorem]{Assumption}
\theoremstyle{remark}
\def\1{\mathbf{1}}
\newcommand{\dv}[1]{{\textcolor{orange}{}}}
\newcommand{\ls}[1]{{\textcolor{violet}{}}}
\def\a{\alpha}
\def\g{\gamma}
\def\dist{{\rm dist}}
\def\bd{m}
\renewcommand{\plaineqref}[1]{(\ref{#1})}
\renewcommand{\eqref}[1]{(\ref{#1})}
\title{Generalized Smooth Stochastic Variational Inequalities:  Almost Sure Convergence and Convergence Rates}
\author{\name Daniil Vankov \email dvankov@asu.edu
      \AND
      \name Angelia Nedi\'{c} \email angelia.nedich@asu.edu
      \AND
      \name Lalitha Sankar \email lsankar@asu.edu\\
      \addr Arizona State University}
\begin{document}

\maketitle

\begin{abstract}
    This paper focuses on solving a stochastic variational inequality (SVI) problem under relaxed smoothness assumption for a class of structured non-monotone operators. The SVI problem has attracted significant interest in the machine learning community due to its immediate application to adversarial training and multi-agent reinforcement learning. In many such applications, the resulting operators do not satisfy the smoothness assumption. To address this issue, we focus on a weaker generalized smoothness assumption called $\alpha$-symmetric. Under $p$-quasi sharpness and $\alpha$-symmetric assumptions on the operator, we study clipped projection (gradient descent-ascent) and clipped Korpelevich (extragradient) methods. For these clipped methods, we provide the first almost-sure convergence results without making any assumptions on the boundedness of either the stochastic operator or the stochastic samples.  We also provide the first in-expectation unbiased convergence rate results for these methods under a relaxed smoothness assumption for $\alpha \leq \frac{1}{2}$.
\end{abstract}

\section{Introduction}
This paper focuses on the stochastic variational inequality (SVI) problem, which consists of finding a point $u^* \in U$, such that 
\begin{equation*}
    \langle F(u^*), u - u^* \rangle \geq 0 \qquad \text{for all } u \in U,
\end{equation*}
where the operator $F(\cdot)$ is specified as the expected value of a stochastic operator $\Phi(\cdot,\cdot): U \times \Xi \rightarrow \mathbb{R}^\bd$, i.e.,
\[F(u) = \mathbb{E}[\Phi (u,\xi)]\qquad\hbox{for all }u\in U,\]
where $\xi \in \Xi$ is a random vector. 
Variational Inequality (VI) problems encompass many practical applications, such as optimization, min-max problems, and multi-agent games. In particular, they play a vital role in modeling equilibrium problems where it's important to capture an interaction between many agents. In machine learning literature, the increasing focus on VIs is due to their relevance to generative adversarial networks (GANs) \cite{gemp2018global, DBLP:conf/iclr/GidelBVVL19}, actor-critic methods \cite{pfau2016connecting}, adversarial training, and multi-agent reinforcement learning \cite{sokota2022unified, kotsalis2022simple}. 
In many such applications, the corresponding operator is defined as an expected value of stochastic or finite sum of operators, which motivates us to study SVIs.
One of the pivotal works~\cite{nemirovski2004prox, juditsky2011solving} on SVIs proposed and studied the celebrated Mirror-Prox method under assumptions on monotonicity and Lipschitz continuity of an operator.\footnote{\noindent The well-studied  Mirror-Prox method~\cite{nemirovski2004prox} has been proven to be optimal for solving VIs under strong monotonicity and Lipschitz continuity assumptions. In fact, this method is the stochastic version of the classical extragradient method.} These assumptions become classical for the analysis of first-order methods for solving SVIs~\cite{beznosikov2022smooth, hsieh2019convergence, hsieh2020explore, loizou2021stochastic}. 

In adversarial and multi-agent training, where the corresponding operator is a gradient of a highly non-linear neural network model, these classical assumptions might not be satisfied. It is well-known that one possible remedy for such non-convergent behavior is in clipping, normalization, or adaptive stepsizes, such as ADAM~\citep{DBLP:journals/corr/KingmaB14}. This effect might be explained by the experiment conducted in~\cite{DBLP:conf/iclr/ZhangHSJ20}. In this work, authors observed that when training deep neural network, the norm of the hessian of the loss function correlates with a norm of a gradient along the optimization trajectory. 

This observation motivated \cite{DBLP:conf/iclr/ZhangHSJ20} to introduce a new and more realistic assumption on the linear growth of the hessian. This led to a great number of works in optimization investigating new assumptions on generalized smoothness and convergence behavior of classical gradient~\citep{li2023convergence}, normalized~\citep{DBLP:conf/icml/00020LL23}, clipped~\citep{koloskova2023revisiting}, and adaptive methods~\citep{wang2023convergence, zhang2024convergence}. Despite this progress in optimization, there are only a few works on generalized smooth min-max~\citep{xiandelving} and VI~\citep{vankov2024icml} problems. This motivates us to delve into investigation of the first-order methods for generalized smooth SVIs.

\subsection{Related work}

\paragraph{Weaker Assumption on SVIs.}

More work has focused on stochastic methods for SVI under more relaxed assumptions to develop and analyze the methods applicable to broader problem classes. In particular, some studies have explored SVIs under pseudo-monotonicity~\cite{kannan2019optimal}, quasi-monotonicity~\cite{loizou2021stochastic}, co-coercivity~\citet{pmlr-v206-beznosikov23a} and quasi-sharpness~\cite{vankov2023last}. \cite{diakonikolas2021efficient} showed that such conditions may not be satisfied even in two played Markov games and introduced the weakest known structured non-monotone assumption. Later, weak Minty SVIs were studied~\cite{DBLP:conf/iclr/PethickFLPC23, choudhury2023single, alacaoglurevisiting} under Lipschitz continuity assumption on the operator.
In our work, we consider the generalized smooth assumption that goes beyond the existing settings.

\paragraph{Normalized and Clipped Methods for SVIs.}
\cite{jelassi2022dissecting} studied the performance of normalized stochastic gradient descent-ascent and ADAM and suggested the crucial role of normalization for training GANs. It is worth noting that, with the right clipping parameters, clipped and normalized step sizes are equivalent up to a constant. Another line of works~\cite{gorbunov2022clipped} focuses on smooth SVIs under heavy-tail noise. Using Lipschitz's continuity of operator and the right choice of clipping parameters, the authors showed a high probability convergence rate for the clipped stochastic Korpelevich method. Recent work~\cite{xiandelving} considered generalized smooth stochastic nonconvex strongly-concave min-max problems and provided $\mathcal{O}(\frac{1}{\sqrt{K}})$ convergence rates for variants of stochastic gradient descent-ascent (SGDA) with normalized stepsizes. Due to the specific structure of the minmax problem and the fact that the gradient of the corresponding function is nonmonotone in one variable and strongly monotone in another, it is difficult to compare this work with ours. Moreover, in this work, the crucial part of the analysis is in the fact that the norm of a gradient can be upper bounded by a function residual. One can not use such bounds in SVIs due to the absence of function values. In our analysis, we develop a new technique to bound the operator norm in almost sure (\emph{a.s.}) sense and in expectation.

\paragraph{Stochastic Analysis of Clipped Methods for Generalized Smooth Optimization.}
In~\cite{DBLP:conf/iclr/ZhangHSJ20}, authors analyzed clipped gradient method under \emph{a.s.} bounded error assumption. \cite{koloskova2023revisiting} showed that the gradient method with standard clipping may not converge to a solution even with small stepsizes. The authors analyzed clipped gradient descent as a biased method and provided a convergence rate for non-convex functions. Later, \cite{li2024convex} developed a new technique allowing to bound stochastic gradients by the function value residual along the optimization trajectory, which helps to find the convergence rate for the gradient with the right choice of stepsizes. In our work, we do not make an assumption on \emph{a.s.} bounded noise and bounded stochastic gradients. Furthermore, we provide not only in-expectations but also \emph{a.s.} convergence of the considered clipped methods.

\paragraph{Contributions.} In light of the existing literature, we consider stochastic VIs with $p$-quasi sharp \emph{generalized smooth} $\alpha$-symmetric operators. We assume a bounded variance of the noise and do not use a restrictive assumption of bounded stochastic operators or bounded samples.
Our key contributions are summarized below (see also Table~\ref{tab:table1}):
\vspace{-0.075in}
\begin{itemize}
    \item We provide the first known analysis of the clipped stochastic projection method (clipped SGDA) for solving stochastic generalized smooth VIs 
    with $p$-quasi sharp and $\a$-symmetric operators. 
    The key feature of our analysis is the use of cleverly chosen clipped {\it stochastic stepsizes} $\gamma_k$. We use two different samples of stochastic the operator, one for clipping stepsizes $\gamma_k$ and another for the direction of the method update. This choice allows us to separate the clipping part from the stochastic error and analyze the method in an unbiased manner. To show \emph{a.s.} convergence, we prove that the series of clipped \emph{stochastic} stepsizes is not summable \emph{a.s.}, i.e. $\mathbb{P}(\sum_{k=0}^{\infty} \gamma_k  = \infty) = 1$. 

    \item We also provide convergence rate for stochastic clipped projection method for $\a \leq 1/2$. For $p=2$ we achieve $\mathcal{O}(k^{-1})$ last iterate convergence. For $p>2$, we show the best iterate convergence rate of $\mathcal{O}(k^{-2(1-q)/p})$, where $1 > q>1/2$ is a parameter of the stepsize choice.

    \item We provide the first known analysis of the stochastic clipped Korpelevich method for solving stochastic generalized smooth VIs
    with $p$-quasi sharp and $\a$-symmetric operators. 
    By reusing clipping stepsizes $\gamma_k$ for both iterates updates $h_k$ and $u_k$, we separate stochastic stepsize from the stochastic error, similar to the projection method analysis. To show \emph{a.s.} convergence, we prove that the series of clipped \emph{stochastic} stepsizes is not summable \emph{a.s.}, i.e. $\mathbb{P}(\sum_{k=0}^{\infty} \gamma_k = \infty) = 1$. 
    
    \item  Moreover, we prove in-expectation convergence rates for the stochastic clipped Korpelevich methods for $\alpha \leq 1/2$.  For $p=2$, we show the last iterate sublinear convergence rate $\mathcal{O}(k^{-1})$. For $p>2$, we show the best iterate convergence rate of $\mathcal{O}(k^{-2(1-q)/p})$, where $1 > q>1/2$ is a parameter of the stepsize choice.
    \item Finally, we present numerical experiments where we compare the performance of the methods with proposed stochastic clipping for different stepsize parameter $q > 1/2$ and quasi-sharpness parameter~$p$.

\end{itemize}
\begin{table*}[h!]
    \begin{center}
    \begin{tabular}{|l|l|l|}
        \hline
          & Stochastic Projection& Stochastic Korpelevich  \\
      \hline
      $p > 0 , \alpha \in (0,1]$ & Asym (Thm \ref{thm-proj-as-converge}) &  Asym (Thm \ref{thm-korpelevich-as-converge}) \\
      
      $p=2 , \alpha \in (0,\frac{1}{2}]$ & $\mathcal{O}\left(\dfrac{D_0}{k^2} + \dfrac{\sigma^2 (C_F + \sigma)^2}{\mu^2 k}\right)$ &
      $\mathcal{O}\left(\dfrac{(C_F + \sigma)^2 K^2 D_0}{\mu^2 k^2} + \dfrac{(\sigma^2+ K_1 \sigma^{2 \alpha}) (C_F + \sigma)^2}{\mu^2 k}\right)$ \\
      $p > 2, \alpha \in (0,\frac{1}{2}]$ &  $\mathcal{O}\left(\dfrac{ (D_0 + \sigma^2)^{2/p} (C_F +\sigma)^{2/p}}{\mu^{2/p} k^{2(1-q)/p}}\right)$ &
      $\mathcal{O}\left(\dfrac{\sigma^{4/p} (D_0 + \sigma^2 + K_1\sigma^{2\alpha})^{2/p} (C_F +\sigma)^{2/p}}{\mu^{2/p} k^{2(1-q)/p}}\right)$ \\
     \hline

    \end{tabular}
    \caption{ 
     Summary of convergence rate results showing the decrease of certain performance measures with the number $k$ of iterations. We use ``Asym" as an abbreviation for asymptotic almost sure convergence results.  For $p$-quasi sharp operators, with $p=2$, and for stochastic projection and Korpelevich methods, the performance measures are $D_k=\mathbb{E}[\dist^2(u_k, U^*)]$ and $D_k=\mathbb{E}[\dist^2(h_k, U^*)]$, respectively.  
     For $p> 2$, the performance measure for both methods is $\bar{D}_k = \mathbb{E}[\dist^2 (\bar{u}_k, U^*)]$, $\bar{u}_k = (\sum_{t=0}^{k} \beta_t)^{-1} \sum_{t=0}^k \beta_t u_t$. The constant $C_F$ denotes the upper bound on $\mathbb{E}[\|F(u_k)\|]$ and $\mathbb{E}[\|F(h_k)\|]$ for stochastic projection and Korpelevich methods, respectively.}
    \label{tab:table1}
    \end{center}
\end{table*}

The rest of the paper is organized as follows. In Section~\ref{sec-prelim}, we provide the assumption on the operator class we consider and  the first-order methods we focus on. In Section~\ref{section-proj} we show the almost sure convergence result of clipped stochastic projection method. In Section~\ref{sec-korp}, we provide \emph{a.s.} convergence results and in-expectation convergence rates for the clipped stochastic Korpelevich method. In Section~\ref{Numerical}, we conduct experiments on solving generalized smooth SVIs and compare the performance of the stochastic clipped projection and Korpelevich method for different problem and stepsize parameters. Section~\ref{Conclusion} concludes our work and presents some further research directions.

\section{Preliminaries}\label{sec-prelim}
In this section, we provide the necessary concepts and assumptions for the considered SVI problem.
We start with a standard definition; the operator $F$ is said to be Lipschitz continuous on a set $U$ if there exists $L > 0$ such that
\begin{equation}
\label{def-Lipschitz}
\|F(u) - F(v)\| \leq L \|u - v\| \quad \hbox{for all } u,v \in U.
\end{equation}
So far, the Lipschitz continuity of the operator was the most common assumption to study SVIs~\cite{nemirovski2004prox, 
DBLP:conf/cdc/YousefianNS14,
YousefianNS2017,
hsieh2019convergence, loizou2021stochastic, alacaoglurevisiting}. 

However, this assumption does not hold in modern deep-learning applications. Based on the experiments on neural network training provided in \cite{DBLP:conf/iclr/ZhangHSJ20}, the norm of Jacobian of the operator correlates with the norm of the operator. Motivated by this observation, \cite{DBLP:conf/iclr/ZhangHSJ20} proposed a new, more realistic, and weaker assumption named $(L_0, L_1)$-smooth:
a differentiable operator $F$ is $(L_0, L_1)$-smooth operator on a set $U$ when the following relation holds
\begin{equation}
\label{def-general-l0l1}
\|\nabla F(u)\| \leq 
L_0 + L_1\|F(u))\|
\quad \text{ for all } u \in U.
\end{equation}

When the operator $F(\cdot)$ is $L$-Lipschitz continuous, it satisfies~\eqref{def-general-l0l1} with $L_0 = L$ and $L_1 = 0$.
Recent work~\cite{DBLP:conf/icml/00020LL23} generalized a class of $(L_0, L_1)$-smooth operators and introduced a new class termed $\alpha$-symmetric. The class of $\alpha$-symmetric operators includes the class of $(L_0, L_1)$-smooth operators and coincides with it when the operator is differentiable for $\a=1$.
Given that the class of $\alpha$-symmetric operators includes $(L_0, L_1)$-smooth and Lipschitz continuous operators,
we focus on this class in our work.

\begin{assumption}\label{asum-alpha} 
Given a convex set $U\subseteq\mathbb{R}^m$, the operator $F(\cdot):U\to\mathbb{R}^\bd$ is $\alpha$-symmetric over $U$, i.e.,
for some $\alpha\in(0,1]$ and $L_0, L_1 \geq 0$, we have for all $u, v \in U$,
\begin{equation}
\label{assum-general-alpha}
\|F(u) - F(v)\| \leq 
\left (L_0 + L_1 \max_{\theta \in (0,1)}\|F(w_{\theta})\|^{\alpha}\right) \|u - v\|,
\end{equation}
where $w_{\theta} = \theta u + (1 - \theta) v$.
\end{assumption}

An alternative characterization of $\a$-symmetric operators has been proved in~\cite{DBLP:conf/icml/00020LL23}, as given in the following proposition.

\begin{proposition}[\cite{DBLP:conf/icml/00020LL23}, Proposition 1]
\label{prop-a} 
Let $U\subseteq \mathbb{R}^m$ be a nonempty convex set and let
$F: U\to\mathbb{R}^m$ be an operator. Then, the following statements hold:
\begin{itemize}
    \item [(a)] $F(\cdot)$ is $\a$-symmetric with $\a \in (0,1)$ and constants $L_0, L_1
\ge 0$ if and only if the following relation holds for all $y, y' \in U$,
\begin{equation}
\begin{aligned}
\label{alpha-property}
\|F(y) - F(y')\| &\leq \|y - y'\| (K_0 + K_1 \|F(y')\|^{\alpha} + K_2 \|y - y'\|^{\a / (1 - \a)}),
\end{aligned}
\end{equation}
where $K_0 = L_0(2^{\a^2 / (1 - \a)} + 1)$, $K_1 = L_1 2^{\a^2 / (1 - \a)} 3^\a$, and $K_2 = L_1^{1 / (1 - \a)} 2^{\a^2 / (1 - \a)} 3^{\a} (1 - \a)^{\a / (1 - \a)} $.

\item[(b)] 
$F(\cdot)$ is $\a$-symmetric with $\a =1 $ and constants $L_0, L_1\ge0$ if and only if the following relation holds for all $y, y' \in U$,
\begin{equation}
\begin{aligned}
\label{alpha-property-1}
&\|F(y) - F(y')\| \leq \|y - y'\| (L_0+ L_1 \|F(y')\|) \exp (L_1 \|y - y'\|).
\end{aligned}
\end{equation}
\end{itemize}
\end{proposition}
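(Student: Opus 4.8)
The plan is to prove both implications by treating the segment between $y'$ and $y$ as a one–dimensional object. Write $w_\theta = \theta y + (1-\theta)y'$ (so $w_0 = y'$, $w_1 = y$), set $b = \|y-y'\|$, and define the scalar majorant $m(t) := \sup_{r\in[0,t]}\|F(w_r)\|$. The one structural fact I would use throughout is that $\lambda w_\theta + (1-\lambda)w_{\theta'} = w_\mu$ for some $\mu$ between $\theta$ and $\theta'$; hence, whenever Assumption~\ref{asum-alpha} is applied to a pair $(w_\theta, w_{\theta'})$, the maximum on its right–hand side only involves values of $\|F\|$ taken along the same segment. I also record that $m(1)<\infty$: in the forward direction this is built into \eqref{assum-general-alpha}, and in the reverse direction it holds because \eqref{alpha-property} forces $F$ to be continuous, hence bounded on the compact segment.

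For the implication \eqref{assum-general-alpha} $\Rightarrow$ \eqref{alpha-property}, I would apply \eqref{assum-general-alpha} to the pair $(w_t, w_0)$; its interpolation points are exactly the $w_r$ with $r\in(0,t)$, so writing $a := \|F(y')\|$ and using monotonicity of $x\mapsto x^\alpha$ gives $\|F(w_t)\| \le a + (L_0 + L_1 m(t)^\alpha)\,tb$ for every $t\in[0,1]$. Since this right–hand side is nondecreasing in $t$ and dominates $\|F(w_{t'})\|$ for all $t'\le t$, taking the supremum over $t'\in[0,t]$ converts it into the self–referential scalar inequality $m(t) \le a + L_0 b t + L_1 b t\, m(t)^\alpha$. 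At $t=1$, the elementary fact that ``$x \le A + Bx^\alpha$ with $x<\infty$ and $\alpha\in(0,1)$ implies $x \le 2A + (2B)^{1/(1-\alpha)}$'' (sharpened by Young's inequality) bounds $m(1)$ explicitly in $a,b,L_0,L_1$. Feeding this into \eqref{assum-general-alpha} for $(y,y')$, distributing $m(1)^\alpha$ using subadditivity of $x\mapsto x^\alpha$, and absorbing the residual cross term via $b^{1+\alpha} \le b + b^{1/(1-\alpha)}$ (valid because $1 < 1+\alpha \le 1/(1-\alpha)$), yields \eqref{alpha-property}; the precise constants $K_0,K_1,K_2$ come out of the sharp version of each of these steps.

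For the reverse implication I would not bootstrap but telescope. Fix $u,v$, put $M = \max_{\theta\in(0,1)}\|F(w_\theta)\|$, and for each $N\ge1$ split $[v,u]$ into the $N$ subsegments with endpoints $w_{j/N}$. Summing \eqref{alpha-property} over consecutive pairs (triangle inequality on the left) bounds $\|F(u)-F(v)\|$ by $\|u-v\|(K_0 + K_1 M^\alpha) + K_2\|u-v\|^{1/(1-\alpha)}N^{-\alpha/(1-\alpha)}$, and letting $N\to\infty$ — where $\alpha<1$ is essential — deletes the last term and leaves exactly \eqref{assum-general-alpha}. Part (b), $\alpha=1$, is the boundary case: its reverse implication is the same telescoping with per–segment factor $(L_0+L_1\|F(w_{(j-1)/N})\|)\exp(L_1\|u-v\|/N)$, so that summing gives $\|F(u)-F(v)\| \le (L_0+L_1 M)\|u-v\|\exp(L_1\|u-v\|/N) \to (L_0+L_1 M)\|u-v\|$; its forward implication repeats the $m(t)$ argument of the previous paragraph, except the scalar inequality is now the linear $m(t)\le\|F(v)\| + (L_0+L_1 m(t))\,tb$, and a Grönwall comparison along the segment — rigorously, by iterating over short sub-segments or by comparing with the upper Dini derivative of $t\mapsto\|F(w_t)\|$ — gives $m(1)\le(\|F(v)\|+L_0/L_1)e^{L_1 b}-L_0/L_1$, which is precisely what \eqref{alpha-property-1} requires.

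The main obstacle is the forward direction: because $\max_\theta\|F(w_\theta)\|$ appears on both sides as soon as one tries to rewrite the $\alpha$–symmetric bound in terms of endpoint data, the argument must first be arranged as a scalar inequality of the shape $m(t)\le A + B\,m(t)^\alpha$ along the segment — which needs the ``supremum of a nondecreasing majorant'' maneuver just to make $m(t)$ appear consistently — and only then closed by the algebraic/Grönwall bootstrap. Extracting the functional form of \eqref{alpha-property} and \eqref{alpha-property-1} this way is routine; what is delicate is pinning down the exact constants (the $2^{\alpha^2/(1-\alpha)}$, $3^\alpha$ and $(1-\alpha)^{\alpha/(1-\alpha)}$ factors), which requires replacing each crude estimate by its sharp Young–inequality counterpart and regrouping the $L_0$– and $L_1$–contributions carefully.
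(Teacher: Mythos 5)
A preliminary remark: the paper does not prove Proposition~\ref{prop-a} at all --- it is quoted with its constants from \cite{DBLP:conf/icml/00020LL23} --- so there is no in-paper proof to compare you against; your attempt has to be judged against the statement itself and the standard argument of that reference. Measured that way, your route is essentially the canonical one: a self-bounding (bootstrap/Gr\"onwall) estimate for $m(t)=\sup_{r\in[0,t]}\|F(w_r)\|$ along the segment for the ``only if'' direction, and a telescoping discretization with $N\to\infty$ for the ``if'' direction. The individual steps you describe are sound: the scalar inequality $m(t)\le \|F(y')\|+L_0bt+L_1bt\,m(t)^{\alpha}$ is valid, the implication $x\le A+Bx^{\alpha}\Rightarrow x\le 2A+(2B)^{1/(1-\alpha)}$ is correct, the $N^{-\alpha/(1-\alpha)}$ term in the telescoped bound does vanish precisely because $\alpha<1$, and for $\alpha=1$ your Gr\"onwall constant $m(1)\le(\|F(y')\|+L_0/L_1)e^{L_1b}-L_0/L_1$ plugs back to give exactly \eqref{alpha-property-1} (with the caveats you yourself flag: the pointwise inequality is vacuous once $L_1tb\ge1$, so the short-subsegment iteration is mandatory, and $L_1=0$ needs a separate trivial case).

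Two genuine shortfalls remain relative to the statement as written. First, part~(a) asserts \emph{specific} constants, and your sketch only produces the correct functional form $K_0'+K_1'\|F(y')\|^{\alpha}+K_2'\|y-y'\|^{\alpha/(1-\alpha)}$; the factors $2^{\alpha^2/(1-\alpha)}$, $3^{\alpha}$ and $(1-\alpha)^{\alpha/(1-\alpha)}$ are never actually derived, and you acknowledge as much. This is not cosmetic here, since $K_1$ (and $K_0+K_1+K_2$) enter the stepsize conditions and rates of Theorem~\ref{thm-korpelevich-rates} quantitatively, so the sharp Young-inequality bookkeeping would have to be carried out, not merely promised. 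Second, in the converse direction your telescoping yields $\alpha$-symmetry with constants $(K_0,K_1)$ rather than the original $(L_0,L_1)$ appearing in the ``if and only if'' (for $\alpha=1$ you do recover $(L_0,L_1)$ exactly); that degradation is the intended reading of the equivalence, but it should be stated explicitly. Related small technicalities worth closing: the $j=1$ telescoping term involves $\|F(y')\|$, which must be dominated by $\max_{\theta\in(0,1)}\|F(w_\theta)\|$ --- this follows from the continuity of $F$ along the segment that \eqref{alpha-property} provides, or by starting the partition at $w_{\epsilon}$ and letting $\epsilon\to0$ --- and the finiteness of $m(t)$ you invoke in the forward direction deserves one sentence rather than an appeal to the word ``max'' in Assumption~\ref{asum-alpha}.
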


Proposition~\ref{prop-a} is useful for our analysis, since it  describes an $\alpha$-symmetric operator by using two points $y, y' \in U$, and bypasses the evaluation of $\max_{\theta\in(0,1)}\|F(w_\theta)\|^\alpha$.
The solution set for the variational inequality problem defined by the set $U$ and operator $F$, denoted $U^*$, is given by
\[U^*=\{u^*\in U\mid \langle F(u^*), u - u^* \rangle \geq  0 \ \ \hbox{for all } u \in U\}.\]

Throughout this paper, we make the following assumption on the set $U$ and the solution set.

\begin{assumption}\label{asum-set}
The set $U\subseteq\mathbb{R}^m$ is a nonempty closed convex set, and the solution set $U^*$ is nonempty and closed. 
\end{assumption}
In our analysis we assume that operator $F$ is $p$-quasi sharp~\cite{vankov2023last}.

\begin{assumption}\label{asum-sharp} The operator $F:U\to\mathbb{R}^\bd$ has a $p$-quasi sharpness property over $U$ relative to the solution set $U^*$, i.e.,
for some $p>0$, $\mu >0$, and for all $u \in U$ and $u^*\in U^*$,
\begin{equation}\label{eq-psharp}
\langle F(u), u - u^* \rangle \geq \mu \,\dist^p(u, U^*).
\end{equation}
\end{assumption}

In the optimization community, such assumptions are called error bounds and are widely studied~\cite{zhou2017unified}. This class of operators encompasses strongly monotone, $p$-monotone~\cite{facchinei2003finite, lin2025perseus}, strongly quasi-monotone~\cite{loizou2021stochastic} and strongly coherent~\cite{song2020optimistic} operators and aligns with the class of operators that satisfy the saddle-point metric subregularity~\cite{DBLP:conf/iclr/WeiLZL21} for $p > 2$. The are many applications where operator satisfy \eqref{eq-psharp}, for example robust learning \cite{wang2023convergence, zarifis2024robustly}, network congestion games~\cite{xiao2025computing}.

Assumption~\ref{asum-sharp} is one of the most general assumptions with the positive inner product between an operator value $F(u)$ and $u - u^*$, which is crucial in our analysis. 

For solving the SVI problem,
we consider stochastic variants of projection and~\cite{korpelevich1976extragradient} methods, where stochastic approximations 
$\Phi(u_k, \xi_k)$ and $\Phi(h_k, \xi_k^1)$ are used, respectively, instead of the directions $F(u_k)$ and $F(h_k)$. The iterates of each of the stochastic methods are defined as follows:

Stochastic projection method:
\begin{equation}
\begin{aligned}
\label{eq-projection-stoch}
u_{k+1} &=  P_{U}(u_k - \g_k \Phi(u_k, \xi_k)),
\end{aligned}
\end{equation}
Stochastic Korpelevich method:
\begin{equation}
\begin{aligned}
\label{eq-korpelevich-stoch}
u_{k} &=  P_{U}(h_k - \g_k \Phi(h_k, \xi_k^1)), \cr 
h_{k+1} &= P_{U}(h_{k} - \g_{k}  \Phi(u_{k}, \xi_k^2)),
\end{aligned}
\end{equation}

where $\{\g_k\}$ is a sequence of  stochastic positive stepsizes, and $u_0, h_0 \in U$ are arbitrary deterministic initial points\footnote{The results easily extend to the case when the initial points are random as long as $\mathbb{E}[\|u_0\|^2]$ and $\mathbb{E}[\|h_0\|^2]$ are finite.}. 
At each operator evaluation of these stochastic methods, a random sample $\xi_k$ is drawn according to the distribution of the random variable $\xi$. 
We assume that the stochastic approximation error $\Phi(u, \xi)-F(u)$ is unbiased and has finite variance, leading to the following formal assumption.
\begin{assumption}\label{asum-samples}
The random sample $\xi$ is such that
for all $u \in U$,
\[\mathbb{E}[\Phi(u, \xi)-F(u)]= 0, \qquad \mathbb{E}[\|\Phi(u, \xi)-F(u)\|^{2}]\le \sigma^{2}.\]
\end{assumption}

Our proof techniques in the following sections can be applied to analyze the \emph{a.s.} convergence and convergence rate of the stochastic Popov~\citep{popov1980modification} method with an appropriate selection of stochastic clipping. However, due to space constraints, we leave this exploration for future research.

\section{Stochastic Clipped Projection Method}
\label{section-proj}
Common approaches to developing convergent methods for generalized smooth optimization and VI problems are normalized or clipping stepsizes. We focus on the latter one and present stepsizes for the stochastic projection method~\eqref{eq-projection-stoch} applied to $\alpha$-symmetric operators:
\begin{equation}
    \label{eq-stepsizes-projection}
    \gamma_k = \beta_k \min \left\{ 1, \frac{1}{\|\Phi(u_k, \xi_k^2)\|}\right\},
\end{equation}
where $\beta_k > 0$ for all $k \geq 0$ and $\xi_k^2$ is a random variable, such that $\xi_k^2$ and $\xi_k$ are independent conditionally on $u_k$.
In other words, at every iteration of the projection method, having $u_k$, two independent samples of the stochastic operator are drawn: (1) $\Phi(u_k, \xi_k)$ for the direction of update and (2) $\Phi(u_k, \xi_k^2)$ for clipping stepsize $\gamma_k$.  We define the sigma-algebra $\mathcal{F}_{k}$ for the method:
\begin{equation}
    \mathcal{F}_k = \{\xi_{0}, \xi_{0}^2, \ldots, \xi_{k}, \xi_{k}^2\}\qquad\hbox{for all }k\ge0,
\end{equation}
with $\mathcal{F}_{-1} = \emptyset$. In the sequel, we provide important results on the behavior of the iterates of the clipped stochastic projection method.

\subsection{Almost sure convergence}

The following lemma establishes a key relation for the iterate sequence $\{u_k\}$ generated by the stochastic projection method with stochastic clipping stepsizes.
Its proof is in Appendix~\ref{sec-lem31}
\begin{lemma}
\label{lemma-proj-as-bound}
    Let Assumptions~\ref{asum-alpha},~\ref{asum-set},~\ref{asum-sharp},~\ref{asum-samples} hold, and $\{u_k\}$ be the iterate sequence generated by stochastic projection method~\plaineqref{eq-projection-stoch} with stepsizes $\gamma_k$ defined in~\plaineqref{eq-stepsizes-projection}. Let parameter $\beta_k$ be such that $\sum_{k=0}^{\infty} \beta_k = \infty$ and $\sum_{k=0}^{\infty} \beta_k^2 < \infty$. Then, the following relation holds almost surely for all $k \geq 0$,
    \begin{equation}
    \label{eq-lemma-projection}
         \mathbb{E}[\|u_{k+1}-u^*\|^2 \mid \mathcal{F}_{k-1} \cup \xi_k^2] \leq \|u_k -u^*\|^2 - 2 \mu  \g_k  \dist^p(u_k, U^*)  + 3 \beta_k^2 (2 \sigma^2 +1).
    \end{equation}
Furthermore, almost surely, we have
\begin{equation}
\label{eq-lemma-projection-2}
\sum_{k=0}^{\infty} \gamma_k \, \dist^p(u_k, U^*) < \infty,
\end{equation}
and the sequence $\{ \|u_k - u^*\|\}$ is bounded almost surely for all $u^* \in U^*$. 
\end{lemma}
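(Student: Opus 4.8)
The plan is to prove the three claims in sequence: first the one-step descent inequality \eqref{eq-lemma-projection}, then the summability \eqref{eq-lemma-projection-2}, and finally the almost-sure boundedness of $\{\|u_k-u^*\|\}$, with the first inequality doing the heavy lifting and the last two following from a supermartingale convergence argument.

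\textbf{Step 1: the one-step inequality.} Fix $u^*\in U^*$. Starting from the nonexpansiveness of the projection $P_U$, I would write
\[
\|u_{k+1}-u^*\|^2 \le \|u_k - \g_k\Phi(u_k,\xi_k) - u^*\|^2 = \|u_k-u^*\|^2 - 2\g_k\langle \Phi(u_k,\xi_k), u_k-u^*\rangle + \g_k^2\|\Phi(u_k,\xi_k)\|^2.
\]
The crux is the conditional expectation given $\mathcal{F}_{k-1}\cup\{\xi_k^2\}$. Because $\g_k$ is measurable with respect to $\mathcal{F}_{k-1}\cup\{\xi_k^2\}$ (it depends only on $u_k$, which is $\mathcal{F}_{k-1}$-measurable, and on $\xi_k^2$) while $\xi_k$ is conditionally independent of $\xi_k^2$ given $u_k$, the cross term becomes $-2\g_k\langle F(u_k), u_k-u^*\rangle$ after taking expectation, and Assumption~\ref{asum-sharp} ($p$-quasi sharpness) lower-bounds this by $2\mu\g_k\dist^p(u_k,U^*)$. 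For the last term, $\g_k^2 = \beta_k^2\min\{1,\|\Phi(u_k,\xi_k^2)\|^{-2}\}$, so $\g_k^2\|\Phi(u_k,\xi_k)\|^2 \le \beta_k^2\min\{1,\|\Phi(u_k,\xi_k^2)\|^{-2}\}\|\Phi(u_k,\xi_k)\|^2$. I would split $\|\Phi(u_k,\xi_k)\|^2 \le 2\|\Phi(u_k,\xi_k)-F(u_k)\|^2 + 2\|F(u_k)\|^2$, bound $\min\{1,\|\Phi(u_k,\xi_k^2)\|^{-2}\}\cdot 1$ crudely by $1$ for the noise piece (whose conditional expectation is $\le\sigma^2$ by Assumption~\ref{asum-samples}), and for the $\|F(u_k)\|^2$ piece use $\|F(u_k)\|^2 \le 2\|F(u_k)-\Phi(u_k,\xi_k^2)\|^2 + 2\|\Phi(u_k,\xi_k^2)\|^2$ together with $\min\{1,\|\Phi(u_k,\xi_k^2)\|^{-2}\}\|\Phi(u_k,\xi_k^2)\|^2\le 1$ and $\min\{1,\|\Phi(u_k,\xi_k^2)\|^{-2}\}\le 1$, whose conditional expectation again contributes $O(\sigma^2)$ plus a constant. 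Collecting these gives the stated $3\beta_k^2(2\sigma^2+1)$ on the right (the precise constant just needs careful bookkeeping of the factors of $2$).

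\textbf{Step 2: summability.} Take the full conditional expectation given $\mathcal{F}_{k-1}$ of \eqref{eq-lemma-projection} (the extra conditioning on $\xi_k^2$ disappears by the tower property), and observe that $\g_k\dist^p(u_k,U^*)\ge 0$, $\sum_k\beta_k^2<\infty$, and the ``noise'' term $3\beta_k^2(2\sigma^2+1)$ is summable and deterministic. This is exactly the hypothesis of the Robbins--Siegmund supermartingale convergence theorem applied to $V_k = \|u_k-u^*\|^2$: it yields that $V_k$ converges almost surely to a finite limit and that $\sum_{k=0}^\infty \g_k\dist^p(u_k,U^*)<\infty$ almost surely, which is \eqref{eq-lemma-projection-2}.

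\textbf{Step 3: boundedness.} Almost-sure convergence of $\{V_k\}=\{\|u_k-u^*\|^2\}$ to a finite random variable immediately implies $\{\|u_k-u^*\|\}$ is bounded almost surely, for each fixed $u^*\in U^*$; since $U^*$ is nonempty this gives the claim. The main obstacle I anticipate is Step~1 --- specifically, keeping the measurability bookkeeping clean (which quantities are $\mathcal{F}_{k-1}\cup\{\xi_k^2\}$-measurable versus requiring the conditional independence of $\xi_k$ and $\xi_k^2$) and controlling the stochastic stepsize term $\g_k^2\|\Phi(u_k,\xi_k)\|^2$ without any boundedness assumption on $\Phi$, which is precisely what forces the use of \emph{two} independent samples and the $\min\{1,\cdot\}$ clipping so that $\g_k^2\|\Phi(u_k,\xi_k^2)\|^2\le\beta_k^2$ can absorb the otherwise-uncontrolled operator norm. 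Steps~2 and~3 are then a routine invocation of Robbins--Siegmund.
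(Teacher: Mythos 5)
Your Step 1 is essentially the paper's argument: the same use of nonexpansiveness of $P_U$, the same trick of inserting the second sample into the quadratic term so that the clipping factor $\min\{1,\|\Phi(u_k,\xi_k^2)\|^{-2}\}$ only ever multiplies $\|\Phi(u_k,\xi_k^2)\|^2$ or a noise term, and the same use of the conditional independence of $\xi_k$ and $\xi_k^2$ plus $p$-quasi sharpness for the cross term. The paper reaches the exact constant $3\beta_k^2(2\sigma^2+1)$ via the single three-term splitting $\Phi(u_k,\xi_k)=e_k+(F(u_k)-\Phi(u_k,\xi_k^2))+\Phi(u_k,\xi_k^2)$ with $(a+b+c)^2\le 3(a^2+b^2+c^2)$, whereas your nested factor-of-two splitting gives a slightly larger constant; that difference is immaterial. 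Step 3 also matches the paper.

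The one genuine issue is in Step 2. By deliberately coarsening the conditioning to $\mathcal{F}_{k-1}$ before invoking Robbins--Siegmund, the summable term you obtain is $2\mu\,\dist^p(u_k,U^*)\,\mathbb{E}[\gamma_k\mid\mathcal{F}_{k-1}]$ (since $\dist^p(u_k,U^*)$ is $\mathcal{F}_{k-1}$-measurable but $\gamma_k$ is not), i.e.\ $\sum_k \dist^p(u_k,U^*)\,\mathbb{E}[\gamma_k\mid\mathcal{F}_{k-1}]<\infty$ a.s., which is not literally \eqref{eq-lemma-projection-2}. The clean fix, and the paper's route, is not to coarsen at all: $\gamma_k$ is measurable with respect to $\sigma(\mathcal{F}_{k-1}\cup\{\xi_k^2\})$, these sigma-algebras increase in $k$, and $\|u_k-u^*\|^2$ is measurable with respect to them, so Lemma~\ref{lemma-polyak11} applies directly to \eqref{eq-lemma-projection} with $v_k=\|u_k-u^*\|^2$, $z_k=2\mu\gamma_k\dist^p(u_k,U^*)$, $a_k=0$, $b_k=3\beta_k^2(2\sigma^2+1)$, yielding \eqref{eq-lemma-projection-2} verbatim together with the a.s.\ convergence (hence boundedness) of $\{\|u_k-u^*\|\}$. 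Alternatively, your coarsened version can be repaired by invoking the standard fact that for nonnegative adapted summands, a.s.\ finiteness of $\sum_k\mathbb{E}[X_k\mid\mathcal{F}_{k-1}]$ implies a.s.\ finiteness of $\sum_k X_k$, but that is an additional lemma your write-up neither states nor cites; as written, the claim that Robbins--Siegmund ``yields $\sum_k\gamma_k\dist^p(u_k,U^*)<\infty$'' does not follow from the inequality you actually fed into it.
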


In the conventional analysis of the methods for SVIs with Lipschitz continuous operators, the sequence $\{\gamma_k \}$ of stepsizes is deterministic and such that $\sum_{k=0}^\infty \gamma_k=\infty$. In our case, $\gamma_k$ is a random variable, and to show \emph{a.s.} convergence we have to show that the series $\sum_{k=0}^{\infty}\{ \gamma_k \}$ is not summable. We do so, providing a sequence of lower bounds for the series and by showing that random variable $\|F(u_k)\|$ is  \emph{a.s.} upper bounded for all $k\geq 0$ and constructing. Moreover, we separate the series into \[\sum_{k=0}^{\infty} \gamma_k = \sum_{k=0}^{\infty}  (\gamma_k - S_k) + \sum_{k=0}^{\infty}  S_k, \]
where $\{S_k\}$ is a convergent martingale. In the next theorem, we present the first results on \emph{a.s.} convergence of the stochastic projection method. 

\begin{theorem}
\label{thm-proj-as-converge}
Let Assumptions~\ref{asum-alpha},~\ref{asum-set},~\ref{asum-sharp}, and~\ref{asum-samples} hold, 
and $\{u_k\}$ be the iterate sequence generated by stochastic projection method~\plaineqref{eq-projection-stoch} with stepsizes $\gamma_k$ defined in~\plaineqref{eq-stepsizes-projection}. Let parameter $\beta_k$ be such that  $\sum_{k=0}^{\infty} \beta_k = \infty$ and $\sum_{k=0}^{\infty} \beta_k^2 < \infty$.
Then, the iterates $u_k$ converge almost surely to a point $\bar u$ such that $\bar u \in U^*$ almost surely.
\end{theorem}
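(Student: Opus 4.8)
The plan is to leverage Lemma~\ref{lemma-proj-as-bound} together with a supermartingale-type argument and the Robbins--Siegmund lemma to pin down a single limit point in $U^*$. The inequality~\eqref{eq-lemma-projection} is almost of Robbins--Siegmund form, but the issue is that the ``decrease'' term $2\mu\gamma_k\dist^p(u_k,U^*)$ involves the \emph{stochastic} stepsize $\gamma_k$, whose conditional expectation given $\mathcal{F}_{k-1}$ is not simply $\beta_k$. So the first step is to take conditional expectation of~\eqref{eq-lemma-projection} with respect to $\mathcal{F}_{k-1}$ only, obtaining
\[
\mathbb{E}[\|u_{k+1}-u^*\|^2\mid\mathcal{F}_{k-1}]\le\|u_k-u^*\|^2-2\mu\,\mathbb{E}[\gamma_k\mid\mathcal{F}_{k-1}]\,\dist^p(u_k,U^*)+3\beta_k^2(2\sigma^2+1),
\]
using that $\dist^p(u_k,U^*)$ is $\mathcal{F}_{k-1}$-measurable and $\gamma_k$ depends only on the extra sample $\xi_k^2$. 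Since $\sum_k\beta_k^2<\infty$, the Robbins--Siegmund lemma already yields that $\|u_k-u^*\|^2$ converges a.s.\ (for each fixed $u^*\in U^*$) and that $\sum_k \mathbb{E}[\gamma_k\mid\mathcal{F}_{k-1}]\,\dist^p(u_k,U^*)<\infty$ a.s.; the a.s.\ boundedness of $\{\|u_k-u^*\|\}$ is already given by the lemma.

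Next I would establish that $\dist^p(u_k,U^*)\to 0$ along a subsequence, and in fact $\liminf_k\dist(u_k,U^*)=0$ a.s. This is where the ``non-summability of the stochastic stepsizes'' discussion in the text comes in: on the event where $\dist(u_k,U^*)$ stays bounded away from zero, one needs $\sum_k\mathbb{E}[\gamma_k\mid\mathcal{F}_{k-1}]<\infty$ to fail. The key sub-claim is that $\|F(u_k)\|$ is a.s.\ bounded (uniformly in $k$ on each sample path): because $\{u_k\}$ is a.s.\ bounded and $F$ is $\alpha$-symmetric, applying Proposition~\ref{prop-a} with $y=u_k$, $y'=u^*$ (so $F(u^*)$ need not vanish, but $\langle F(u^*),\cdot\rangle$ bounds are controlled) gives $\|F(u_k)\|\le c(1+\|u_k-u^*\|+\|u_k-u^*\|^{\alpha/(1-\alpha)})$, which is a.s.\ bounded. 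Hence $\min\{1,1/\|\Phi(u_k,\xi_k^2)\|\}$ has a conditional expectation bounded below by a deterministic positive constant whenever we also control the second moment of $\Phi(u_k,\xi_k^2)$ via Assumption~\ref{asum-samples}; concretely $\mathbb{E}[\gamma_k\mid\mathcal{F}_{k-1}]\ge \beta_k\cdot c'$ on the (a.s.) event where $\|F(u_k)\|\le M$. Combined with $\sum_k\beta_k=\infty$, this forces $\sum_k\mathbb{E}[\gamma_k\mid\mathcal{F}_{k-1}]=\infty$ a.s., and together with $\sum_k\mathbb{E}[\gamma_k\mid\mathcal{F}_{k-1}]\dist^p(u_k,U^*)<\infty$ we conclude $\liminf_k\dist(u_k,U^*)=0$ a.s.

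Finally I would upgrade ``$\liminf$ of distance is zero'' plus ``$\|u_k-u^*\|$ converges for each $u^*\in U^*$'' to full convergence to a single point of $U^*$. Pick a countable dense subset $\{u^*_j\}$ of $U^*$; on the a.s.\ event where $\|u_k-u^*_j\|$ converges for every $j$, let $\{u_{k_i}\}$ be a subsequence with $\dist(u_{k_i},U^*)\to 0$; since $\{u_{k_i}\}$ is bounded, pass to a further subsequence converging to some $\bar u$, and $\dist(\bar u,U^*)=0$ with $U^*$ closed gives $\bar u\in U^*$. Then $\|u_k-\bar u\|$ converges (approximate $\bar u$ by the dense points and use the triangle inequality / that convergence of $\|u_k-u^*_j\|$ is uniform enough), and its limit is $0$ along $\{u_{k_i}\}$, hence $\|u_k-\bar u\|\to 0$. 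Measurability of $\bar u$ follows from the standard construction. The main obstacle is the middle step: correctly showing the conditional-expectation lower bound $\mathbb{E}[\gamma_k\mid\mathcal{F}_{k-1}]\gtrsim\beta_k$ on the sample paths where $\|F(u_k)\|$ is bounded, which requires carefully combining the a.s.\ boundedness of $\{u_k\}$, the growth bound on $F$ from Proposition~\ref{prop-a}, and the bounded-variance assumption to control $\mathbb{E}[\min\{1,1/\|\Phi(u_k,\xi_k^2)\|\}\mid\mathcal{F}_{k-1}]$ from below by a positive constant.
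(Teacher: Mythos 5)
Your proposal is correct in substance, but it follows a genuinely different route from the paper's proof, and the difference is worth noting. The paper keeps the decrease term with the \emph{realized} stepsize, i.e.\ it works from $\sum_k \gamma_k\,\dist^p(u_k,U^*)<\infty$ a.s.\ (Lemma~\ref{lemma-proj-as-bound}) and then proves in a separate lemma that $\sum_k\gamma_k=\infty$ a.s.; that step is done by splitting $\sum_k\beta_k\min\{1,1/\|\Phi(u_k,\xi_k^2)\|\}$ along the events $A_k=\{\|e_k^2\|\le 2\sigma\}$ into a convergent $L^2$-bounded martingale $S_n=\sum_{k\le n}\beta_k(\mathbb{I}(A_k)-\mathbb{E}[\mathbb{I}(A_k)\mid\mathcal{F}_{k-1}])$ plus a drift term bounded below using $\mathbb{P}(A_k\mid\mathcal{F}_{k-1})\ge 1/2$ and the a.s.\ bound on $\|F(u_k)\|$. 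You instead take one more conditional expectation (tower property, legitimate since $u_k$ and $\dist^p(u_k,U^*)$ are $\mathcal{F}_{k-1}$-measurable), apply Robbins--Siegmund to get $\sum_k\mathbb{E}[\gamma_k\mid\mathcal{F}_{k-1}]\,\dist^p(u_k,U^*)<\infty$ a.s., and then lower-bound the conditional expected stepsize by $\tfrac{\beta_k}{2}\min\{1,1/(\|F(u_k)\|+2\sigma)\}$ path-wise. This buys you a cleaner argument: the martingale convergence theorem is not needed at all, and the only probabilistic ingredients are the same two facts the paper also uses, namely $\mathbb{P}(\|e_k^2\|\le 2\sigma\mid\mathcal{F}_{k-1})\ge 1/2$ (Markov) and the path-wise boundedness of $\|F(u_k)\|$ deduced from $\alpha$-symmetry and the a.s.\ boundedness of the iterates. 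Your endgame (countable dense subset of $U^*$ to get a single a.s.\ event on which $\|u_k-u^*\|$ converges for all relevant $u^*$, then extract a subsequential limit $\bar u\in U^*$ by closedness) is the standard argument and is in fact slightly more careful than the paper's corresponding step.

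Two small points to fix when writing this up. First, the growth bound you quote, $\|F(u_k)\|\le c(1+\|u_k-u^*\|+\|u_k-u^*\|^{\alpha/(1-\alpha)})$, is misstated: Proposition~\ref{prop-a}(a) gives $\|F(u_k)\|\le \|F(u^*)\|+\|u_k-u^*\|\bigl(K_0+K_1\|F(u^*)\|^{\alpha}\bigr)+K_2\|u_k-u^*\|^{1/(1-\alpha)}$, and it only covers $\alpha\in(0,1)$; for $\alpha=1$ you must invoke Proposition~\ref{prop-a}(b) with its exponential factor. Neither change affects your argument, since all you need is boundedness along each (a.s.\ bounded) sample path. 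Second, the lower bound on $\mathbb{E}[\gamma_k\mid\mathcal{F}_{k-1}]$ cannot be obtained by a Jensen-type step, because $t\mapsto\min\{1,1/t\}$ is not convex; you must condition on the event $\{\|e_k^2\|\le 2\sigma\}$ exactly as the paper does, and note that the resulting bound is $\mathcal{F}_{k-1}$-measurable and random through $\|F(u_k)\|$ rather than a deterministic constant --- which is enough, since on the a.s.\ event $\sup_k\|F(u_k)\|\le M(\omega)<\infty$ it yields $\sum_k\mathbb{E}[\gamma_k\mid\mathcal{F}_{k-1}]\ge \tfrac12\min\{1,1/(M(\omega)+2\sigma)\}\sum_k\beta_k=\infty$, and hence $\liminf_k\dist(u_k,U^*)=0$ a.s.
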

The proof of Theorem~\ref{thm-proj-as-converge} can be found in Appendix~\ref{sec-thm32}. Notice that in an unconstrained setting ($U = \mathbb{R}^m$) according to Theorems 3.1 and 3.2 in~\cite{koloskova2023revisiting}, for any clipping parameters $ \beta >0, c > 0$, there exist a stochastic gradient operator $\nabla f (\cdot, \xi)$ which satisfies Assumptions~\ref{asum-alpha},~\ref{asum-sharp} (with $p=2$), ~\ref{asum-samples} for which there exists a fixed point $\hat{v}$ of a standard clipping with one-sample which there exists  a solution 
\[\mathbb{E}_{\xi}[\beta_k \min \{1, \frac{c}{\|\nabla f(\hat{v}, \xi )\|} \} \hat{v} ] = 0 \quad \text{and} \quad \|\mathbb{E}_{\xi} [\nabla f(\hat{v}, \xi)]\| \geq \sigma^2 / 12,\]
where $c > 0$ is a constant independent from a step sizes parameter $\beta_k$. This observation leads to an unavoidable bias in one-sample clipped SGD~\citep{koloskova2023revisiting}. In contrast, by using two samples in clipped projection method, we overcome this problem and provide \emph{a.s.} convergence to a solution.

\subsection{Convergence rate}
The difficulty of the convergence rate analysis is in the randomness of stepsizes $\gamma_k$. To show in-expectation convergence, we can take a total expectation on both sides of equation~\plaineqref{eq-lemma-projection} of Lemma~\ref{lemma-proj-as-bound}. However, since $\gamma_k$ is a random variable, we have to provide a lower bound on $\mathbb{E}[\gamma_k \dist^p(u_k, U^*)]$. With this goal in mind, in the next lemma we show that the sequence $\{\mathbb{E}[\|F(u_k)\|]\}$ of expected norms  is bounded. The proof of the lemma is in Appendix~\ref{sec-lemma-EF}.
\begin{lemma}
\label{lemma-EF-bound}
    Let Assumption~\ref{asum-alpha}  hold, with $\a \in (0, 1/2]$, Assumptions~\ref{asum-set},~\ref{asum-sharp}, and~\ref{asum-samples}  hold, and let $\{u_k\}$ be the iterate sequence generated by stochastic projection method~\plaineqref{eq-projection-stoch} with stepsizes $\gamma_k$ defined in~\plaineqref{eq-stepsizes-projection}. Let parameter $\beta_k$ be such that $\sum_{k=0}^{\infty} \beta_k = \infty$, and $\sum_{k=0}^{\infty} \beta_k^2 < \infty$. Then, the sequence $\{\mathbb{E}[\|F(u_k)\|]\}$ is bounded by some constant $C_F > 0$.
\end{lemma}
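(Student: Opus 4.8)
The plan is to establish a recursive inequality for the scalar sequence $a_k := \mathbb{E}[\|F(u_k)\|]$ and then apply a deterministic sequence lemma (of the Robbins–Siegmund / Chung type) to conclude boundedness. First I would start from the $\alpha$-symmetric characterization in Proposition~\ref{prop-a}(a): since $\alpha \le 1/2$, the exponent $\alpha/(1-\alpha) \le 1$, so applying \eqref{alpha-property} with $y = u_{k+1}$ and $y' = u_k$ gives
\[
\|F(u_{k+1})\| \le \|F(u_k)\| + \|u_{k+1}-u_k\|\bigl(K_0 + K_1\|F(u_k)\|^{\alpha} + K_2\|u_{k+1}-u_k\|^{\alpha/(1-\alpha)}\bigr).
\]
Next I would control the step $\|u_{k+1}-u_k\| = \|P_U(u_k - \gamma_k\Phi(u_k,\xi_k)) - u_k\| \le \gamma_k\|\Phi(u_k,\xi_k)\|$. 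The crucial feature of the clipped stepsize is that $\gamma_k \le \beta_k$ always, and $\gamma_k \le \beta_k/\|\Phi(u_k,\xi_k^2)\|$; but because the direction sample $\xi_k$ and the clipping sample $\xi_k^2$ differ, I would instead bound $\|u_{k+1}-u_k\|$ crudely by $\beta_k\|\Phi(u_k,\xi_k)\|$ for the terms that will be squared or raised to a small power, and exploit the clipping only where needed. Actually the cleaner route: $\gamma_k\|\Phi(u_k,\xi_k)\| \le \beta_k \min\{1,1/\|\Phi(u_k,\xi_k^2)\|\}\|\Phi(u_k,\xi_k)\|$; taking expectations and using Assumption~\ref{asum-samples} together with independence of $\xi_k,\xi_k^2$ given $u_k$, one gets $\mathbb{E}[\|u_{k+1}-u_k\|\mid \mathcal{F}_{k-1}] \le \beta_k\,\mathbb{E}[\min\{1,1/\|\Phi(u_k,\xi_k^2)\|\}\mid\mathcal{F}_{k-1}]\,\mathbb{E}[\|\Phi(u_k,\xi_k)\|\mid\mathcal{F}_{k-1}]$, and $\mathbb{E}[\|\Phi(u_k,\xi_k)\|\mid\mathcal{F}_{k-1}]\le a_k' + \sigma$ where $a_k' = \mathbb{E}[\|F(u_k)\| \mid \mathcal F_{k-1}]$; more importantly $\min\{1,1/\|\Phi\|\}\cdot\|\Phi\| \le 1$ pointwise, which caps the whole product.

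The key point is that after taking total expectations and using Jensen's inequality (together with $\alpha \le 1/2 < 1$ so $\mathbb{E}[\|F(u_k)\|^\alpha] \le a_k^\alpha$, and handling the $\|u_{k+1}-u_k\|^{1+\alpha/(1-\alpha)}$ term via $\gamma_k\|\Phi\| \le 1$ pointwise so that $\|u_{k+1}-u_k\|^{1+\alpha/(1-\alpha)} \le \|u_{k+1}-u_k\| \le \beta_k(a_k'+\sigma)$ up to constants), one arrives at a recursion of the shape
\[
a_{k+1} \le a_k + \beta_k\bigl(c_1 + c_2 a_k^{\alpha} + c_3 a_k\bigr) \cdot (\text{bounded factor}) \quad\text{vs.}\quad a_{k+1} \le (1 + c\beta_k) a_k + c'\beta_k .
\]
Wait — that alone does not give boundedness since $\sum\beta_k = \infty$. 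So the real engine must be a contractive term: I would bring in Lemma~\ref{lemma-proj-as-bound}, specifically \eqref{eq-lemma-projection-2}, which says $\sum_k \gamma_k \dist^p(u_k,U^*) < \infty$ a.s., and the a.s. boundedness of $\|u_k - u^*\|$. Combined with $p$-quasi sharpness (Assumption~\ref{asum-sharp}) and the $\alpha$-symmetric bound applied between $u_k$ and a fixed $u^* \in U^*$ (so $\|F(u_k)\| \le \|F(u^*)\| + \|u_k-u^*\|(K_0 + K_1\|F(u^*)\|^\alpha + K_2\|u_k-u^*\|^{\alpha/(1-\alpha)})$), we see $\|F(u_k)\|$ is a.s. bounded by a random constant depending polynomially on $\sup_k\|u_k-u^*\|$. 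The task is then to upgrade this a.s. bound to a bound on the \emph{expectation} $a_k = \mathbb{E}[\|F(u_k)\|]$, which is where uniform integrability must be argued — e.g. by carrying the recursion for $\mathbb{E}[\|u_k-u^*\|^2]$ (take total expectation in \eqref{eq-lemma-projection}, drop the nonnegative $\dist^p$ term, and apply the Robbins–Siegmund lemma in expectation using $\sum\beta_k^2 < \infty$) to get $\sup_k \mathbb{E}[\|u_k-u^*\|^2] < \infty$, and then, since $\alpha/(1-\alpha) \le 1$ forces the worst power of $\|u_k-u^*\|$ in the $F$-bound to be at most $2$, conclude $a_k \le \|F(u^*)\| + (K_0 + K_1\|F(u^*)\|^\alpha)\mathbb{E}\|u_k-u^*\| + K_2\,\mathbb{E}\|u_k-u^*\|^{1+\alpha/(1-\alpha)} \le C_F$ by Jensen and the second-moment bound.

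The main obstacle I anticipate is precisely the interplay between the clipping (which makes $\gamma_k$ a nonlinear random function of the sample) and taking expectations: one cannot pull $\gamma_k$ out of expectations, and the two-sample construction is what saves the argument, so the bookkeeping of conditional expectations over $\mathcal{F}_{k-1}$, $\mathcal{F}_{k-1}\cup\xi_k^2$, and the full $\mathcal{F}_k$ must be done carefully — in particular using the pointwise bound $\gamma_k\|\Phi(u_k,\xi_k^2)\| \le \beta_k$ on the clipping sample while only needing $\mathbb{E}[\|\Phi(u_k,\xi_k)\|\mid\mathcal F_{k-1}] \le \mathbb{E}[\|F(u_k)\|\mid\mathcal F_{k-1}] + \sigma$ on the direction sample. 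The restriction $\alpha \le 1/2$ is essential here: it guarantees $\alpha/(1-\alpha) \le 1$, so that the extra $\|u_{k+1}-u_k\|^{\alpha/(1-\alpha)}$ factor in \eqref{alpha-property} is harmless after using $\gamma_k\|\Phi\|\le 1$, and it guarantees the power of $\|u_k - u^*\|$ appearing in the final $F$-bound stays within reach of the uniform second-moment estimate rather than requiring higher moments that Assumption~\ref{asum-samples} does not supply.
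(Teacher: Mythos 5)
Your final argument is correct and essentially the same as the paper's: take the total expectation of the descent inequality from Lemma~\ref{lemma-proj-as-bound}, apply the deterministic Robbins--Siegmund lemma (using $\sum_k\beta_k^2<\infty$) to get $\sup_k\mathbb{E}[\|u_k-u^*\|^2]<\infty$, and then bound $\|F(u_k)\|$ via Proposition~\ref{prop-a}(a) against a fixed solution, where $\alpha\le 1/2$ makes the exponent $1+\alpha/(1-\alpha)=1/(1-\alpha)\le 2$ so Jensen's inequality closes the argument. The initial detour through a recursion for $\mathbb{E}[\|F(u_{k+1})\|]$ in terms of $\mathbb{E}[\|F(u_k)\|]$ is unnecessary (and, as you note yourself, inconclusive), but the route you settle on matches the paper's proof.
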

To prove the preceding lemma, we show that the expected norms of the operator are bounded by some constant $C_F$ on the trajectory of the method. Unfortunately, even though the sequence $\{\|F(u_k)\|\}$ is bounded almost surely, it does not imply that $\{\mathbb{E}[\|F(u_k)\|]\}$ is bounded. To show this, we rely on the properties of the method and the generalized smoothness of the operator in Proposition~\ref{prop-a} to obtain that for all $k \geq 0$, and arbitrary solution $v^*$, 
\begin{align}
\label{section-3-F-bound}
\|F(u_k)\| &\leq \|F(u_k) - F(v^*)\| + \|F(v^*)\| \cr
&\leq \|u_k - v^*\| (K_0 + K_1 \|F(v^*)\|^{\alpha} + K_2 \|u_k - v^* \|^{\a / (1 - \a)}) + \|F(v^*)\| .
\end{align}
Notice that by taking an expectation in~\eqref{section-3-F-bound}, the RHS  is undefined for  $\a > 1/2$. For $\alpha \in (0, 1/2]$, using ~\eqref{section-3-F-bound} and boundedness of $\mathbb{E}[\|u_k - v^*\|]$, that follows from taking an expectation in \eqref{eq-lemma-projection},  we achieve the desired bound on $\mathbb{E}[\|F(u_k)\|]$. Using this result, in the next theorem, we provide a convergence rate for the projection method with clipping.
\begin{theorem}
\label{thm-proj-rates}
Let Assumption~\ref{asum-alpha}, with $\a \in (0, 1/2]$, and Assumptions~\ref{asum-set},~\ref{asum-sharp}, and~\ref{asum-samples} hold.
Let
$\{u_k\}$ be the sequence generated by stochastic projection method~\plaineqref{eq-projection-stoch} with stepsizes $\gamma_k$ defined in~\plaineqref{eq-stepsizes-projection}. Let $D_k = \mathbb{E}[\dist^2(u_k, U^*)]$ and $C_F$ be an upperbound on $\mathbb{E}[\|F(u_k)\|]$. Then, we have:

\textbf{Case $p=2$}. Let $\beta_k= \frac{2}{a(2 + k)}$ with $a=\mu \min \left\{1, \frac{1}{2 (C_F+ \sigma)}\right\}$. Then, the following inequality holds
\begin{align}
 D_{k+1} \leq  \dfrac{8  D_0}{  k^2} + \frac{6 (2 \sigma^2 + 1)}{ a^2 k}\qquad\hbox{for all $k \geq 1$}.
\end{align}

\textbf{Case $p\geq 2$}. Let $\beta_k = \frac{b}{(k+1)^q}$, where $1/2<q<1$ and $b > 0$.
Then, the following inequality holds
\begin{align}\label{eq-thm-proj-rates}
\bar{D}_k 
&\le \dfrac{(1 - q)^{2/p} \left(D_0  + 3b^2 (2 \sigma^2 +1 ) / (2 q -1)\right)^{2/p}}{ (ab)^{2/p}\, \left((k+1)^{1 - q} - 2^{1 - q}\right)^{2/p}}
\qquad\hbox{for all $k \geq 1$},
\end{align}
where $\bar{D}_k = \mathbb{E}[\dist^2 (\bar{u}_k, U^*)]$, $\bar{u}_k = (\sum_{t=0}^{k} \beta_t)^{-1} \sum_{t=0}^k \beta_t u_t$,
and $a=\mu \min \left\{1, \frac{1}{2 (C_F+ \sigma)}\right\}$.
\end{theorem}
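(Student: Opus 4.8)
\textbf{Proof proposal for Theorem~\ref{thm-proj-rates}.}

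The plan is to start from the conditional inequality~\eqref{eq-lemma-projection} of Lemma~\ref{lemma-proj-as-bound}, take total expectation, and convert it into a recursion purely in terms of $D_k = \mathbb{E}[\dist^2(u_k,U^*)]$. The immediate difficulty is that the ``decrease'' term on the right-hand side is $2\mu\,\mathbb{E}[\gamma_k\dist^p(u_k,U^*)]$, where $\gamma_k$ is random and correlated with $\dist(u_k,U^*)$ through the shared iterate $u_k$. The key observation is that $\gamma_k = \beta_k\min\{1,\|\Phi(u_k,\xi_k^2)\|^{-1}\}$, and since $\xi_k^2$ is conditionally independent of everything else given $u_k$, one can lower bound $\mathbb{E}[\gamma_k\mid\mathcal{F}_{k-1}]$ by something like $\beta_k\min\{1,(\mathbb{E}[\|\Phi(u_k,\xi_k^2)\|\mid u_k]+\text{something})^{-1}\}$ via Jensen/Markov; using Assumption~\ref{asum-samples} and Lemma~\ref{lemma-EF-bound} this is $\ge \beta_k\min\{1,(C_F+\sigma)^{-1}\} =: (a/\mu)\beta_k$ in expectation. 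The cleanest route is to argue that $\mathbb{E}[\gamma_k\dist^p(u_k,U^*)] \ge (a/\mu)\beta_k\,\mathbb{E}[\dist^p(u_k,U^*)]$, which requires handling the correlation carefully --- this is the main obstacle and presumably where the detailed work lies (likely one conditions on $\mathcal{F}_{k-1}$, pulls out $\dist^p(u_k,U^*)$ which is $\mathcal{F}_{k-1}$-measurable, and bounds the conditional expectation of $\min\{1,\|\Phi(u_k,\xi_k^2)\|^{-1}\}$ from below).

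Granting that step, one obtains the recursion
\[
D_{k+1} \le \mathbb{E}[\dist^2(u_{k+1},U^*)] \le D_k - 2a\,\beta_k\,\mathbb{E}[\dist^p(u_k,U^*)] + 3\beta_k^2(2\sigma^2+1).
\]
For the case $p=2$, this is a standard linear-type recursion $D_{k+1}\le(1-2a\beta_k)D_k + 3\beta_k^2(2\sigma^2+1)$; plugging $\beta_k = \frac{2}{a(2+k)}$ gives $1-2a\beta_k = 1 - \frac{4}{2+k} = \frac{k-2}{k+2}$, and then I would unroll the recursion (or prove the bound $D_{k+1}\le \frac{8D_0}{k^2}+\frac{6(2\sigma^2+1)}{a^2 k}$ by induction on $k$), using $\prod$ telescoping of the $\frac{k-2}{k+2}$ factors, which collapses to $O(k^{-2})$, and bounding the noise sum $\sum_j (\prod_{i>j}\frac{i-2}{i+2})\beta_j^2$ by $O(1/k)$. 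The induction approach is likely the least painful: assume the bound at step $k$, substitute into the recursion, and verify the inequality at $k+1$ by elementary estimates.

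For the case $p\ge 2$, one cannot get last-iterate control directly, so I would instead sum the recursion: $2a\sum_{t=0}^k\beta_t\,\mathbb{E}[\dist^p(u_t,U^*)] \le D_0 + 3(2\sigma^2+1)\sum_{t=0}^k\beta_t^2 \le D_0 + 3b^2(2\sigma^2+1)/(2q-1)$, using $\sum\beta_t^2 = b^2\sum(t+1)^{-2q} \le b^2/(2q-1)$ since $q>1/2$. Dividing by $2a\sum_{t=0}^k\beta_t$ and using Jensen's inequality with the convex map $x\mapsto x^{p/2}$ (valid as $p\ge2$) applied to the weighted average $\bar u_k$ gives $\mathbb{E}[\dist^p(\bar u_k,U^*)] \le (\sum\beta_t)^{-1}\sum\beta_t\mathbb{E}[\dist^p(u_t,U^*)]$, hence $\bar D_k^{p/2} \le \mathbb{E}[\dist^p(\bar u_k,U^*)] \le \frac{D_0+3b^2(2\sigma^2+1)/(2q-1)}{2a\sum_{t=0}^k\beta_t}$. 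Finally, lower bounding $\sum_{t=0}^k\beta_t = b\sum_{t=0}^k(t+1)^{-q} \ge b\int_2^{k+2}x^{-q}dx = \frac{b}{1-q}((k+2)^{1-q}-2^{1-q}) \ge \frac{b}{1-q}((k+1)^{1-q}-2^{1-q})$ and raising both sides to the power $2/p$ yields~\eqref{eq-thm-proj-rates}. The one subtlety to watch is that Jensen must be applied to $\dist^p(\cdot,U^*)$ as a function of $u$ (convexity of $u\mapsto\dist^2(u,U^*)$ composed with $t\mapsto t^{p/2}$, the latter convex and nondecreasing for $p\ge2$), so the composition is convex and the inequality goes the right way.
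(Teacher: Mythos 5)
You have correctly identified where the difficulty lies, but the step you flag as ``the main obstacle'' is precisely the step your sketch does not actually deliver, and the mechanism you propose for it would fail. You suggest conditioning on $\mathcal{F}_{k-1}$, pulling out $\dist^p(u_k,U^*)$, and lower bounding $\mathbb{E}\big[\min\{1,\|\Phi(u_k,\xi_k^2)\|^{-1}\}\mid\mathcal{F}_{k-1}\big]$ by a deterministic constant of the form $\min\{1,(C_F+\sigma)^{-1}\}$. But after conditioning, the best you can get (e.g.\ by a Markov argument on $\|e_k^2\|$) is a bound of the form $\tfrac12\min\{1,1/(\|F(u_k)\|+2\sigma)\}$, which still contains the $\mathcal{F}_{k-1}$-measurable random variable $\|F(u_k)\|$; the constant $C_F$ from Lemma~\ref{lemma-EF-bound} only controls $\mathbb{E}[\|F(u_k)\|]$, not $\|F(u_k)\|$ pointwise (the a.s.\ bound from the almost-sure analysis is path-dependent and cannot be pulled out either). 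Taking total expectation then leaves you with $\mathbb{E}\big[\min\{1,1/(\|F(u_k)\|+2\sigma)\}\,\dist^p(u_k,U^*)\big]$, a product of two correlated $\mathcal{F}_{k-1}$-measurable quantities, and the clipping factor can be small exactly when $\dist^p(u_k,U^*)$ is large; also note that $x\mapsto\min\{1,1/x\}$ is not convex, so the Jensen step you allude to does not apply. The missing idea, which is the actual content of the paper's proof, is a decoupling device at the level of total expectations: the paper introduces the event $A_k=\{\|F(u_k)\|+\|e_k\|\le 2(\mathbb{E}[\|F(u_k)\|]+\mathbb{E}[\|e_k\|])\}$, shows $\mathbb{P}(A_k)\ge\tfrac12$ by Markov's inequality, and on $A_k$ the clipping factor is bounded below by the deterministic constant $\min\{1,\tfrac{1}{2(C_F+\sigma)}\}$, which yields $\mathbb{E}[\gamma_k\dist^p(u_k,U^*)]\ge\tfrac12\beta_k\min\{1,\tfrac{1}{2(C_F+\sigma)}\}\,\mathbb{E}[\dist^p(u_k,U^*)]$ (note the extra factor $\tfrac12$: the paper's recursion has coefficient $a\beta_k$, not your $2a\beta_k$, which is why its constants differ from yours).

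Two smaller remarks. First, you cannot literally start from \eqref{eq-lemma-projection}, which is stated for a fixed $u^*\in U^*$: to obtain a recursion in $D_k=\mathbb{E}[\dist^2(u_k,U^*)]$ you must substitute $y=P_{U^*}(u_k)$ into the pre-expectation inequality \eqref{eq-grad-2} (as the paper does), using $\dist^2(u_{k+1},U^*)\le\|u_{k+1}-P_{U^*}(u_k)\|^2$ and the fact that $P_{U^*}(u_k)$ is $\mathcal{F}_{k-1}$-measurable so the cross term still vanishes. Second, once the correct recursion $D_{k+1}\le D_k-a\beta_k\mathbb{E}[\dist^p(u_k,U^*)]+3\beta_k^2(2\sigma^2+1)$ is in hand, the rest of your plan is sound and matches the paper: for $p=2$ the paper invokes its Lemma~\ref{Lemma7-stich} rather than induction/unrolling, which is an equivalent standard argument, and for $p\ge2$ your summation, double application of Jensen, and the series estimates of Lemma~\ref{lemma-kq-series-rates} are exactly the paper's steps.
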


To derive the convergence rate in terms of $\mathbb{E}[\dist^2(u_k, U^*)]$ we need to relate the progress at each iteration, measured by $\mathbb{E}[\gamma_k \dist^p(u_k, U^*)]$ to $\mathbb{E}[\dist^2(u_k, U^*)]$. Using the boundedness of $\mathbb{E}[\|F(u_k)\|]$, we first bound $\mathbb{E}[\gamma_k \dist^p(u_k, U^*)]$ in terms of $\mathbb{E}[\dist^p(u_k, U^*)]$. Finally, we estimate  $\mathbb{E}[\dist^p(u_k, U^*)]$ through $\mathbb{E}[\dist^2(u_k, U^*)]$ by applying Jensen's inequality, which holds for $p \geq 2$. The proof of Theorem~\ref{thm-proj-rates} is in Appendix~\ref{sec-thm-proj-rates}.

For the simplicity of convergence rate comparison, assume $2 (C_F +\sigma) \geq 1$. Then, from Theorem~\ref{thm-proj-rates} we obtain $\mathcal{O}(\frac{D_0}{k^2} + \frac{\sigma^2 (C_F + \sigma)^2}{\mu^2 k})$ last iterate convergence rate for $p=2$, and $\mathcal{O}(\frac{ (D_0 + \sigma^2)^{2/p} (C_F +\sigma)^{2/p}}{\mu^{2/p} k^{2(1-q)/p}})$ average (or best) iterate convergence rate for  $p>2$ with $q\in(1/2,1)$. It is worth mentioning that obtained rates are unbiased, unlike the analysis in~\cite{koloskova2023revisiting}. However, it comes with the price of two oracle calls per iteration. For $p = 2$, the rate from Theorem~\ref{thm-proj-rates} matches the rate $\mathcal{O}(\frac{1}{k})$ obtained in Theorem 4.3~\citep{loizou2021stochastic} for SGDA under stronger assumption on quasi-strong monotonicity and Lipschitz continuity of the operator. Interestingly, for $p=2$, the parameter $\mu$ appears in the rate as $\frac{1}{\mu^2}$ in both~\citep{loizou2021stochastic} and in our Theorem~\ref{thm-proj-rates}, which is known to be the optimal dependence on $\mu$~\citep{beznosikov2022smooth}.
The rate for $p > 2$ is new in the stochastic case and generalizes the convergence results in deterministic setting~\citep{vankov2024icml}.
From a theoretical perspective, the {clipped projection method and its two-sample} variant differ in several key aspects. In terms of asymptotic convergence, the standard {clipped projection method} suffers from an unavoidable bias, whereas our {clipped projection method} with two samples per iteration enjoys almost sure convergence. For convergence rates, our theorem shows a sublinear rate for the {two-sample} variant, while there are no known results for the standard {clipped projection method} in the setting of stochastic VIs. This makes {the two-sample clipped projection method} more favorable from a theoretical point of view. We also compare the performance of the two methods in the numerical experiments section.

\section{{Stochastic Clipped} Korpelevich Method}\label{sec-korp}
The stepsizes for the stochastic Korpelevich method for $\alpha$-symmetric operators are as given below
\begin{equation}
\label{stepsizes-korpelevich}
    \gamma_k = \beta_k \min \left\{ 1, \frac{1}{\|\Phi (h_{k}, \xi^1_{k})\|} \right\},
\end{equation}
where $\beta_k > 0$ for all $k \geq 0$ and $\xi_k^1$ is a random variable associated with the stochastic approximation $\Phi(h_k, \xi_k^1)$ of $F(h_k)$. We define the sigma-algebra $\mathcal{F}_{k}$ for the method, as follows:
\begin{equation}
\label{eq-clippedstepsize}
    \mathcal{F}_k = \{\xi_{0}^1, \xi_{0}^2, \ldots, \xi_{k}^1, \xi_{k}^2\}\qquad\hbox{for all }k\ge0,
\end{equation}
with $\mathcal{F}_{-1} = \emptyset$. Notice that to obtain $h_{k+1}$ from a point $u_k$, the stepsize $\gamma_k$ clips $\Phi(h_k, \xi_k^1)$, not the stochastic approximation $\Phi(u_k, \xi_k^2)$ of the operator at point $u_k$, i.e., {the update for $h_{k+1}$ in relation~\eqref{eq-korpelevich-stoch} is given by}

\begin{equation}
\begin{aligned}
\label{eq-clippedkm}
    u_{k} &=  P_{U}(h_k - \beta_k \min \left\{ 1, \frac{1}{\|\Phi (h_{k}, \xi^1_{k})\|} \right\}  \Phi(h_k, \xi_k^1)), \cr 
    h_{k+1} &= P_{U} \left(h_k - \beta_k \min \left\{ 1, \frac{1}{\|\Phi (h_{k}, \xi^1_{k})\|} \right\} \Phi(u_k, \xi_k^2)\right).
\end{aligned}
\end{equation}

Thus, sample $\xi_k^2$ is drawn after $\xi_k^1$, and 
$\Phi(h_k, \xi_k^1)$ is completely determined when $\mathcal{F}_{k-1}\cup\{\xi_k^1\}$ is given, thus the stepsize is determined as well. This property of the stochastic Korpelevich method with clipping stepsizes $\gamma_k$ is crucial for further convergence analysis of the method. In the sequel, we provide important results on the behavior of the iterates of {the stochastic clipped} Korpelevich method.

\subsection{Almost sure convergence}
In the forthcoming lemma, we provide some basic relations that hold almost surely for the iterates of the stochastic Korpelevich method with clipped stochastic stepsize.
\begin{lemma}
\label{Lemma-Korpelevich}
Let Assumptions~\ref{asum-alpha},~\ref{asum-set},~\ref{asum-sharp}, and~\ref{asum-samples}  hold. Also, let $\{h_k\}$  and $\{u_k\}$ be iterates generated by stochastic Korpelevich method~\plaineqref{eq-korpelevich-stoch} with stepsizes $\gamma_k$ defined in ~\plaineqref{stepsizes-korpelevich}
and with parameter $\beta_k$ such that $\sum_{k=0}^{\infty} \beta_k = \infty$ and $\sum_{k=0}^{\infty} \beta_k^2 < \infty$. Let $v_k = \|h_k - u^*\|^2 + \frac{1}{2}\|h_{k-1} - u_{k-1}\|^2 + 2 \g_k \mu \dist^p(u_k, U^*) $, then the following relation holds almost surely. 
    \begin{equation}
        \begin{aligned}
        \label{eq-lemma-korpelevich}
         \mathbb{E}[v_{k+1} \mid \mathcal{F}_{k-1}] &\leq v_k - \frac{1}{2}\|h_{k-1} - u_{k-1}\|^2  - 2 \mu \gamma_{k-1} \dist^p(u_{k-1}, U^*) \cr
         &+ 6 \beta_k^2 (  \sigma^2 + C_{e}(\beta_k, \alpha) \sigma^{2 \alpha}) \qquad\hbox{for all $k \geq 0$},
        \end{aligned}
    \end{equation}
where  $C_{e}(\beta_k, \alpha) = 
K_1$, when $\alpha \in (0,1)$, and 
 $C_{e}(\beta_k, \alpha) = \exp (L_1 \beta_k)$, when $\alpha = 1$. Moreover, the following relations hold almost surely,

\begin{equation}
\label{eq-lemma-korpelevich-2}
\sum_{k=0}^{\infty} \gamma_k \dist^p(u_k, U^*) \, < \infty, \quad \sum_{k=0}^{\infty} \|h_k - u_k\|^2 < \infty.
\end{equation}

Furthermore,
the sequence $\{ \|h_k - u^*\|\}$ is bounded almost surely for all $u^* \in U^*$.
\end{lemma}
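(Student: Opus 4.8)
The plan is to run the standard two-step (extragradient) projection estimate and then organize the error terms so that they telescope, exactly mirroring the structure used for the projection method in Lemma~\ref{lemma-proj-as-bound}. First I would write the nonexpansiveness of the projection for both updates in~\eqref{eq-korpelevich-stoch}: starting from $\|h_{k+1}-u^*\|^2 \le \|h_k - \gamma_k\Phi(u_k,\xi_k^2) - u^*\|^2$ and expanding, the cross term $-2\gamma_k\langle \Phi(u_k,\xi_k^2), h_k - u^*\rangle$ must be split as $-2\gamma_k\langle \Phi(u_k,\xi_k^2), u_k - u^*\rangle - 2\gamma_k\langle \Phi(u_k,\xi_k^2), h_k - u_k\rangle$. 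The first piece, after taking conditional expectation given $\mathcal{F}_{k-1}\cup\{\xi_k^1\}$ (so that $\gamma_k$ and $u_k$ are fixed and $\xi_k^2$ is unbiased), produces $-2\gamma_k\langle F(u_k), u_k-u^*\rangle \le -2\gamma_k\mu\,\dist^p(u_k,U^*)$ by the $p$-quasi sharpness Assumption~\ref{asum-sharp}. The second cross term $-2\gamma_k\langle \Phi(u_k,\xi_k^2), h_k-u_k\rangle$ is handled by combining it with the identity $\|h_k - \gamma_k\Phi(u_k,\xi_k^2)-u^*\|^2 = \|h_k-u^*\|^2 - 2\gamma_k\langle\cdot\rangle + \gamma_k^2\|\Phi(u_k,\xi_k^2)\|^2$ and with the first projection step rewritten via the three-point identity $\|h_k - u_k\|^2 = \|h_k - \gamma_k\Phi(h_k,\xi_k^1) - u_k\|^2 - \gamma_k^2\|\Phi(h_k,\xi_k^1)\|^2 + 2\gamma_k\langle\Phi(h_k,\xi_k^1), h_k - u_k\rangle$ (or rather the optimality inequality for the projection defining $u_k$, giving $\langle h_k - \gamma_k\Phi(h_k,\xi_k^1) - u_k, u_k - h_{k+1}\rangle \ge 0$-type bounds). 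The goal of this algebra is to produce a term $-\tfrac12\|h_k - u_k\|^2$ on the right-hand side plus a controlled remainder involving $\gamma_k^2\|\Phi(h_k,\xi_k^1) - \Phi(u_k,\xi_k^2)\|^2$ or $\gamma_k^2\|\Phi(u_k,\xi_k^2) - \Phi(h_k,\xi_k^1)\|^2$.

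Next I would bound that residual. Write $\Phi(u_k,\xi_k^2)-\Phi(h_k,\xi_k^1) = (\Phi(u_k,\xi_k^2)-F(u_k)) + (F(u_k)-F(h_k)) + (F(h_k)-\Phi(h_k,\xi_k^1))$. The two noise pieces each contribute at most $\sigma^2$ in conditional expectation by Assumption~\ref{asum-samples}. The deterministic middle piece $\|F(u_k)-F(h_k)\|$ is controlled by the generalized smoothness: by Proposition~\ref{prop-a}, $\|F(u_k)-F(h_k)\| \le \|u_k - h_k\|(K_0 + K_1\|F(h_k)\|^\alpha + K_2\|u_k-h_k\|^{\alpha/(1-\alpha)})$ for $\alpha\in(0,1)$ (and the analogous exponential bound for $\alpha=1$). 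Since $u_k = P_U(h_k - \gamma_k\Phi(h_k,\xi_k^1))$ and $\gamma_k\|\Phi(h_k,\xi_k^1)\|\le\beta_k$ by the clipping definition~\eqref{stepsizes-korpelevich}, nonexpansiveness gives $\|u_k-h_k\|\le\beta_k$, so $\|u_k-h_k\|^{\alpha/(1-\alpha)}\le\beta_k^{\alpha/(1-\alpha)}$ is uniformly small, and $\gamma_k^2\|F(u_k)-F(h_k)\|^2 \lesssim \beta_k^2\|u_k-h_k\|^2(\ldots)^2$ which can be absorbed into the $-\tfrac12\|h_k-u_k\|^2$ term for $\beta_k$ small, leaving only the $K_1\|F(h_k)\|^\alpha$ contribution; but $\gamma_k^2\|F(h_k)\|^{2\alpha} \le \beta_k^2(\gamma_k\|F(h_k)\|)^{2\alpha}\beta_k^{-2\alpha}\cdot\beta_k^{2\alpha}$ — more carefully, $\gamma_k\|\Phi(h_k,\xi_k^1)\|\le\beta_k$ and a triangle inequality on the noise yields $\gamma_k^{2}\|F(h_k)\|^{2\alpha}\lesssim \beta_k^{2}(1+\sigma^{2\alpha})$ up to constants, which is exactly where the $C_e(\beta_k,\alpha)\sigma^{2\alpha}$ term with $C_e = K_1$ (resp. $\exp(L_1\beta_k)$ for $\alpha=1$) comes from. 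Collecting everything yields, after taking a further conditional expectation to remove the $\xi_k^1$-dependence and using $\mathbb{E}[\gamma_k\,\cdot\mid\mathcal{F}_{k-1}]$ bookkeeping,
\begin{equation*}
\mathbb{E}[v_{k+1}\mid\mathcal{F}_{k-1}] \le v_k - \tfrac12\|h_{k-1}-u_{k-1}\|^2 - 2\mu\gamma_{k-1}\dist^p(u_{k-1},U^*) + 6\beta_k^2(\sigma^2 + C_e(\beta_k,\alpha)\sigma^{2\alpha}),
\end{equation*}
where the shift of indices on the $\|h_{k-1}-u_{k-1}\|^2$ and $\gamma_{k-1}\dist^p$ terms is precisely the telescoping device: the $-\tfrac12\|h_k-u_k\|^2$ and $-2\gamma_k\mu\dist^p(u_k,U^*)$ produced at step $k$ are stored inside $v_{k+1}$ (note $v_k$ already contains $+\tfrac12\|h_{k-1}-u_{k-1}\|^2 + 2\gamma_k\mu\dist^p(u_k,U^*)$), so one index-shifted copy survives on the right.

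Finally, the almost-sure conclusions follow from the Robbins–Siegmund supermartingale convergence theorem applied to~\eqref{eq-lemma-korpelevich}: since $\sum_k\beta_k^2<\infty$ and $C_e(\beta_k,\alpha)$ is bounded (it converges to $1$ or $K_1$), the additive term is summable, so $\{v_k\}$ converges a.s. to a finite limit and $\sum_k\bigl(\tfrac12\|h_{k-1}-u_{k-1}\|^2 + 2\mu\gamma_{k-1}\dist^p(u_{k-1},U^*)\bigr)<\infty$ a.s., which gives both series in~\eqref{eq-lemma-korpelevich-2}; convergence of $\{v_k\}$ together with $\gamma_k\dist^p(u_k,U^*)\to0$ and $\|h_{k-1}-u_{k-1}\|^2\to0$ forces $\{\|h_k-u^*\|^2\}$ itself to converge, hence $\{\|h_k-u^*\|\}$ is a.s. bounded. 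I expect the main obstacle to be the second step: carefully pairing the two projection inequalities so that the cross term $-2\gamma_k\langle\Phi(u_k,\xi_k^2),h_k-u_k\rangle$ is dominated by $-\tfrac12\|h_k-u_k\|^2$ plus an $O(\beta_k^2)$ remainder, while simultaneously keeping the generalized-smoothness bound on $\|F(u_k)-F(h_k)\|$ from reintroducing an unbounded $\|F(h_k)\|$ factor — this is where the clipping choice~\eqref{stepsizes-korpelevich} (clipping by $\|\Phi(h_k,\xi_k^1)\|$ rather than $\|\Phi(u_k,\xi_k^2)\|$) and the $\alpha\le1$ exponent interact delicately, and it is the crux distinguishing this analysis from the Lipschitz case.
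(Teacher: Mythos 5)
Your plan is essentially the paper's own argument: the extragradient estimate producing a $-\|h_k-u_k\|^2$ term, the three-way split of $\Phi(h_k,\xi_k^1)-\Phi(u_k,\xi_k^2)$ into two noise pieces plus $F(h_k)-F(u_k)$, the $\alpha$-symmetric bound combined with the clipping bound $\|h_k-u_k\|\le\gamma_k\|\Phi(h_k,\xi_k^1)\|\le\beta_k$, the index-shifted Lyapunov function $v_k$, conditioning on $\mathcal{F}_{k-1}\cup\{\xi_k^1\}$ to decouple $\gamma_k$ from $e_k^2$, and Robbins--Siegmund at the end. Two execution points need care, and the first is exactly the "obstacle" you flag. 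Plain nonexpansiveness for the $h_{k+1}$-update is not enough: you must use the refined projection inequality \eqref{eq-proj3}, which keeps the residual $-\|h_{k+1}-h_k+\gamma_k\Phi(u_k,\xi_k^2)\|^2$; expanding $\|h_{k+1}-h_k\|^2$ around $u_k$ and invoking \eqref{eq-proj1} for the projection defining $u_k$ with test point $h_{k+1}$ yields $-\|h_k-u_k\|^2$, and Young's inequality then cancels the $\|h_{k+1}-u_k\|^2$ term, leaving precisely the remainder $3\gamma_k^2\|\Phi(h_k,\xi_k^1)-\Phi(u_k,\xi_k^2)\|^2$. If instead you try to control the cross term $-2\gamma_k\langle\Phi(u_k,\xi_k^2),h_k-u_k\rangle$ directly, the factor $\gamma_k\|\Phi(u_k,\xi_k^2)\|$ is not tamed by the clipping (which uses $\|\Phi(h_k,\xi_k^1)\|$) and cannot be absorbed; you mention the correct fix parenthetically, so this is a matter of carrying it out rather than a wrong idea. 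Second, the contribution of $K_1\|F(h_k)\|^{\alpha}$ should be kept pathwise: $\|F(h_k)\|^{\alpha}\le\|\Phi(h_k,\xi_k^1)\|^{\alpha}+\|e_k^1\|^{\alpha}$ and $\gamma_k\|\Phi(h_k,\xi_k^1)\|^{\alpha}\le\beta_k$ give $\gamma_k^2\|F(h_k)-F(u_k)\|^2\le 2\beta_k^2 C_a^2\|h_k-u_k\|^2+2\beta_k^2 C_e^2\|e_k^1\|^{2\alpha}$, and $\sigma^{2\alpha}$ appears only after taking conditional expectation via Jensen ($2\alpha\le 2$), not as an almost-sure bound as your "$\lesssim\beta_k^2(1+\sigma^{2\alpha})$" suggests. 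Finally, as you anticipate, the coefficient $\tfrac12$ in front of $\|h_k-u_k\|^2$ is obtained by absorbing $6\beta_k^2C_a^2\|h_k-u_k\|^2$, which requires $\beta_k$ small, i.e., the descent inequality \eqref{eq-lemma-korpelevich} is established for all sufficiently large $k$ — the same restriction appears in the paper's proof and does not affect the Robbins--Siegmund conclusion.
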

The proof of Lemma~\ref{Lemma-Korpelevich} is in Appendix~\ref{sec-lem41}.

In the standard analysis of the Korpelevich method for SVIs with Lipschitz operators~\cite{kannan2019optimal, vankov2023last}, \emph{a.s.} convergence results were achieved for a deterministic stepsize sequence $\{\gamma_k\}$. In our case, similarly to projection method analysis, $\{\gamma_k\}$ is a sequence of random variables, which makes the analysis of the methods more difficult and involved. 
By the choice of stepsizes $\gamma_k$ as given in~\eqref{stepsizes-korpelevich} {and the iterated expectation rule}, the following relation holds true
\begin{align}
\label{eq-korpelevich-error}
    \mathbb{E}[\gamma_k \langle \Phi(u_k, \xi_k^2) - F(u_k), u_k - u^* \rangle \mid \mathcal{F}_{k-1}] = 0
    \qquad\hbox{for all }k\ge0.
\end{align}
 To prove~\eqref{eq-korpelevich-error} for the stochastic Korpelevich method, we note that the clipping stepsize is using $\|\Phi(h_k,\xi_k^1)\|$, which decouples from $\Phi(u_k, \xi_k^2)$ by properly using conditional expectation. 
 Specifically, we first take the expectation conditioned on $\mathcal{F}_{k-1} \cup \xi_k^1$ and observe that $\gamma_k$ is {completely determined  given $\mathcal{F}_{k-1} \cup \xi_k^1$. Then, we use Assumption~\ref{asum-samples} for the sample $\xi_k^2$, and the relation~\eqref{eq-korpelevich-error} follows by the law of iterated expectation.} Interestingly, we do not have to take another sample for the clipping in the stochastic Korpelevich method, as we have done in the stochastic projection method. Thus, to perform one iteration, we use two oracle calls in both methods.

Using Lemma~\ref{Lemma-Korpelevich}, we next present the almost sure convergence of the stochastic clipped Korpelevich method.
\begin{theorem}
\label{thm-korpelevich-as-converge}
Let Assumptions~\ref{asum-alpha},~\ref{asum-set},~\ref{asum-sharp}, and~\ref{asum-samples}  hold
and $\{h_k\}$,  $\{u_k\}$ be iterates generated by stochastic Korpelevich method~\plaineqref{eq-korpelevich-stoch} with stepsizes $\gamma_k$ defined in~\plaineqref{stepsizes-korpelevich}. Let parameter $\beta_k$ be such that
$\sum_{k=0}^{\infty} \beta_k = \infty$, and $\sum_{k=0}^{\infty} \beta_k^2 < \infty$.
Then, the iterates $h_k$ and $u_k$ converge almost surely to a point $\bar u$ such that $\bar u \in U^*$ almost surely.
\end{theorem}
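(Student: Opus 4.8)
The plan is to combine the Robbins--Siegmund supermartingale convergence theorem with the summability relations from Lemma~\ref{Lemma-Korpelevich}, exactly in the spirit of the projection-method argument (Theorem~\ref{thm-proj-as-converge}), but now tracking the Lyapunov sequence $v_k = \|h_k-u^*\|^2 + \tfrac12\|h_{k-1}-u_{k-1}\|^2 + 2\gamma_k\mu\,\dist^p(u_k,U^*)$. First I would fix an arbitrary $u^*\in U^*$ and apply Robbins--Siegmund to the recursion~\eqref{eq-lemma-korpelevich}: the non-negative ``noise'' terms $6\beta_k^2(\sigma^2 + C_e(\beta_k,\alpha)\sigma^{2\alpha})$ are summable (since $\sum\beta_k^2<\infty$ and $C_e$ is bounded — equal to $K_1$ for $\alpha<1$, and $\exp(L_1\beta_k)\le\exp(L_1\beta_0)$ for $\alpha=1$), so $v_k$ converges a.s. to a finite limit and the subtracted terms are summable a.s., recovering~\eqref{eq-lemma-korpelevich-2}. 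In particular $\|h_k-u_k\|\to0$ a.s., and since $v_k$ converges while $\tfrac12\|h_{k-1}-u_{k-1}\|^2\to0$ and (along a subsequence, using $\sum\gamma_k\dist^p(u_k,U^*)<\infty$) $\gamma_k\mu\,\dist^p(u_k,U^*)\to0$, the sequence $\{\|h_k-u^*\|\}$ converges a.s. for every fixed $u^*\in U^*$.

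Next I would extract a limit point in $U^*$. Because $\sum_k\gamma_k\dist^p(u_k,U^*)<\infty$ a.s. while, by the crucial non-summability argument (which is exactly the Korpelevich analogue of the projection-method step: show $\|F(h_k)\|$ is a.s. bounded, hence $\gamma_k=\beta_k\min\{1,1/\|\Phi(h_k,\xi_k^1)\|\}$ has $\sum_k\gamma_k=\infty$ a.s. via the splitting $\sum\gamma_k=\sum(\gamma_k-S_k)+\sum S_k$ with $\{S_k\}$ a convergent martingale), we must have $\liminf_k\dist^p(u_k,U^*)=0$ a.s. So there is a (random) subsequence $u_{k_j}$ with $\dist(u_{k_j},U^*)\to0$; since $U^*$ is closed (Assumption~\ref{asum-set}) and $\{\|u_{k_j}-u^*\|\}$ is bounded, passing to a further subsequence $u_{k_j}\to\bar u\in U^*$ a.s., and $h_{k_j}\to\bar u$ as well because $\|h_k-u_k\|\to0$.

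Finally I would upgrade this subsequential convergence to full convergence by the standard Opial-type argument: apply the already-established fact that $\{\|h_k-w\|\}$ converges a.s. for each fixed $w\in U^*$ to the particular choice $w=\bar u$ — along the subsequence $\|h_{k_j}-\bar u\|\to0$, so the whole sequence $\|h_k-\bar u\|\to0$, i.e. $h_k\to\bar u$ a.s., and then $u_k\to\bar u$ a.s. as well. A small measure-theoretic care point is that $\bar u$ is random, so ``$\{\|h_k-w\|\}$ converges for each fixed $w$'' must be promoted to ``for all $w\in U^*$ simultaneously on a full-probability event,'' which is handled either by taking a countable dense subset of $U^*$ and using continuity, or by noting the Robbins--Siegmund conclusion holds on one common event for a countable family; I would spell this out. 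The main obstacle I anticipate is precisely the a.s. non-summability of the random stepsizes $\sum_k\gamma_k=\infty$: establishing the a.s. uniform bound on $\|F(h_k)\|$ (which does not follow from the in-expectation bound of Lemma~\ref{lemma-EF-bound}, and must instead be derived from the a.s. boundedness of $\{\|h_k-u^*\|\}$ together with the $\alpha$-symmetric growth estimate of Proposition~\ref{prop-a}) and then controlling the martingale part $S_k$ so that the lower-bounding series diverges — everything else is a routine assembly of Robbins--Siegmund plus the Opial argument.
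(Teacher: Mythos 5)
Your proposal is correct and follows essentially the same route as the paper: Robbins--Siegmund applied to the Lyapunov recursion of Lemma~\ref{Lemma-Korpelevich}, a.s.\ non-summability of the random stepsizes $\gamma_k$ obtained from the a.s.\ bound on $\|F(h_k)\|$ (derived, as you note, from a.s.\ boundedness of $\|h_k-u^*\|$ and Proposition~\ref{prop-a}, not from Lemma~\ref{lemma-korpelevich-EF-bound}) plus the convergent-martingale splitting, then $\liminf_k \dist(u_k,U^*)=0$, extraction of a limit point in the closed set $U^*$, and the Opial-type upgrade via convergence of $\|h_k-\bar u\|$, with your extra care about the randomness of $\bar u$ being a welcome refinement. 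One small fix: since $\sum_k \gamma_k\,\dist^p(u_k,U^*)<\infty$ a.s.\ with nonnegative terms, $\gamma_k\,\dist^p(u_k,U^*)\to 0$ along the \emph{whole} sequence (not merely a subsequence), which is what you actually need to pass from convergence of $v_k$ to convergence of $\|h_k-u^*\|^2$.
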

To prove \emph{a.s.} convergence, we firstly show that $\sum_{k=0}^\infty \gamma_k = \infty$ \emph{a.s.}, by providing a sequence of lower bounds on $\gamma_k $, using \emph{a.s.} boundedness of $\|h_k - u^* \|$ of Lemma~\ref{Lemma-Korpelevich}, and proving that $\|F(h_k)\|$ is \emph{a.s.} bounded.  Similarly to the proof of \ref{thm-proj-as-converge}, we separate the series into \[\sum_{k=0}^{\infty} \gamma_k = \sum_{k=0}^{\infty}  (\gamma_k - S_k) + \sum_{k=0}^{\infty}  S_k, \]
where $\{S_k\}$ is a convergent martingale.
The full proof can be found in Appendix~\ref{sec-Thm42}. 

\subsection{Convergence rate} 
We start our analysis by taking the total expectation on both sides of equation~\plaineqref{eq-lemma-korpelevich} from Lemma~\ref{Lemma-Korpelevich}. For further analysis, similar to {the stochastic clipped projection method,} the challenge lies in the randomness of the stepsizes $\gamma_k$. To handle this,
firstly, we establish a lower bound for $\mathbb{E}[\gamma_k \dist^p(u_k, U^*)]$ by showing that the sequence $\{\mathbb{E}[|F(u_k)|]\}$ of expected norms  remains bounded, as shown in the next lemma. The proof of the lemma can be found in Appendix~\ref{sec-lemma-EF}.
\begin{lemma}
\label{lemma-korpelevich-EF-bound}
    Let Assumption~\ref{asum-alpha}, with $\a \in (0, 1/2]$, and Assumptions~\ref{asum-set}, ~\ref{asum-sharp}, ~\ref{asum-samples} hold. Let $\{u_k\}$, $\{h_k\}$ be iterates generated by stochastic Korpelevich method~\plaineqref{eq-korpelevich-stoch} with stepsizes $\gamma_k$ defined in~\plaineqref{stepsizes-korpelevich} and the parameter $\beta_k$ such that $\sum_{k=0}^{\infty} \beta_k = \infty$ and $\sum_{k=0}^{\infty} \beta_k^2 < \infty$. Then, $\mathbb{E}[\|F(h_k)\|]$ is bounded by some constant $C_F > 0$ for all $k \geq 0$.
\end{lemma}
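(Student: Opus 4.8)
\textbf{Proof proposal for Lemma~\ref{lemma-korpelevich-EF-bound}.}

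The plan is to mimic the strategy used for Lemma~\ref{lemma-EF-bound}, now applied to the Korpelevich iterates $\{h_k\}$. The key structural fact I would exploit is that, by Proposition~\ref{prop-a}(a) with $\a\in(0,1/2]$ (so that $\a/(1-\a)\le 1$ and, crucially, $2\a\le 1$), we have for an arbitrary fixed solution $v^*\in U^*$ and all $k\ge0$,
\begin{align}
\label{eq-propose-korp-Fbound}
\|F(h_k)\| \le \|F(h_k)-F(v^*)\| + \|F(v^*)\|
\le \|h_k-v^*\|\bigl(K_0 + K_1\|F(v^*)\|^{\a} + K_2\|h_k-v^*\|^{\a/(1-\a)}\bigr) + \|F(v^*)\|.
\end{align}
Since $\a/(1-\a)\le 1$, the term $\|h_k-v^*\|^{1+\a/(1-\a)}$ is dominated (up to constants) by $1+\|h_k-v^*\|^2$, so the right-hand side of~\eqref{eq-propose-korp-Fbound} is controlled by an affine-plus-quadratic function of $\|h_k-v^*\|$; more precisely it is bounded by $C(1+\|h_k-v^*\|^2)$ for a constant $C$ depending only on $K_0,K_1,K_2,\|F(v^*)\|$. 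Hence it suffices to show $\sup_k \mathbb{E}[\|h_k-v^*\|^2] < \infty$.

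To get the uniform bound on $\mathbb{E}[\|h_k-v^*\|^2]$, I would take total expectation in the descent relation~\eqref{eq-lemma-korpelevich} of Lemma~\ref{Lemma-Korpelevich}. Writing $V_k := \mathbb{E}[v_k]$ where $v_k = \|h_k-u^*\|^2 + \tfrac12\|h_{k-1}-u_{k-1}\|^2 + 2\g_k\mu\,\dist^p(u_k,U^*)$, the tower property gives
\begin{align}
\label{eq-propose-korp-Vk}
V_{k+1} \le V_k - \tfrac12\,\mathbb{E}[\|h_{k-1}-u_{k-1}\|^2] - 2\mu\,\mathbb{E}[\g_{k-1}\dist^p(u_{k-1},U^*)] + 6\beta_k^2\bigl(\sigma^2 + C_e(\beta_k,\alpha)\sigma^{2\alpha}\bigr).
\end{align}
Dropping the two nonpositive terms and summing over $k$, together with $\sum_k\beta_k^2<\infty$ and the fact that $C_e(\beta_k,\alpha)$ is bounded (it equals $K_1$ for $\a<1$ and $\exp(L_1\beta_k)\le\exp(L_1\beta_0)$ for $\a=1$), we obtain $\sup_k V_k < \infty$. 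Since $\mathbb{E}[\|h_k-u^*\|^2]\le V_k$, this yields the desired uniform bound on $\mathbb{E}[\|h_k-v^*\|^2]$ (taking $u^*=v^*$). Combining with~\eqref{eq-propose-korp-Fbound} and the bound $\|h_k-v^*\|^{1+\a/(1-\a)}\le 1+\|h_k-v^*\|^2$ gives a finite constant $C_F$ with $\mathbb{E}[\|F(h_k)\|]\le C_F$ for all $k$.

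The main obstacle, and the place where the restriction $\a\le 1/2$ is essential, is making sure every expectation in~\eqref{eq-propose-korp-Fbound} is finite and controllable: taking expectations there produces the terms $\mathbb{E}[\|h_k-v^*\|\cdot\|F(v^*)\|^\a]$ and $\mathbb{E}[\|h_k-v^*\|^{1+\a/(1-\a)}]$, and only when $1+\a/(1-\a)\le 2$, i.e. $\a\le 1/2$, can the latter be absorbed into the (already bounded) second moment of $\|h_k-v^*\|$; for $\a>1/2$ this would require a higher moment bound which~\eqref{eq-lemma-korpelevich} does not supply. A secondary technical point is the telescoping argument on~\eqref{eq-propose-korp-Vk}: one must be slightly careful because $v_k$ itself contains the stepsize term $2\g_k\mu\,\dist^p(u_k,U^*)$, but this is nonnegative and the summability $\sum_k\mathbb{E}[\g_k\dist^p(u_k,U^*)]<\infty$ (obtained from summing~\eqref{eq-propose-korp-Vk}) shows it does not accumulate, so $\sup_k V_k<\infty$ indeed transfers to $\sup_k\mathbb{E}[\|h_k-u^*\|^2]<\infty$.
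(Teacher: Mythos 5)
Your proposal is correct and follows essentially the same route as the paper: take total expectation in the recursion of Lemma~\ref{Lemma-Korpelevich} to get uniform boundedness of $\mathbb{E}[\|h_k-v^*\|^2]$, then control $\mathbb{E}[\|F(h_k)\|]$ via Proposition~\ref{prop-a}(a), with $\a\le 1/2$ ensuring the exponent $1+\a/(1-\a)\le 2$. The only cosmetic differences are that the paper invokes the deterministic Robbins--Siegmund lemma (Lemma~\ref{lemma-polyak11-det}) where you telescope directly, and it uses Jensen's inequality on $\mathbb{E}[\|h_k-v^*\|^{1/(1-\a)}]$ where you use the pointwise bound by $1+\|h_k-v^*\|^2$; both are equally valid.
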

Similarly to the analysis presented in Section~\ref{section-proj}, we bound $F(h_k)$ by using a triangle inequality and the property of $\alpha$-symmetric operators, and by taking the total expectation, we obtain
\begin{align}
\label{section-4-F-bound}
\mathbb{E}[\|F(u_k)\|] &\leq K_0 \mathbb{E}[ \|u_k - v^*\|]   + K_2 \mathbb{E}[\|u_k - v^* \|^{\a / (1 - \a)})] + \|F(v^*)\| + K_1 \|F(v^*)\|^{\alpha}.
\end{align}
We can show that the preceding bound has a finite expectation only for $0<\alpha \leq 1/2$, which motivates the restriction on $\alpha$ in Lemma~\ref{lemma-korpelevich-EF-bound}.
Equipped with the boundedness of the sequence $\{\mathbb{E}[\|F(h_k)\|\}$ of expected norms of the operator  along the iterates $\{h_k\}$, we present the next convergence rate theorem.
\begin{theorem}
\label{thm-korpelevich-rates}
Let Assumption~\ref{asum-alpha}, with $\a \in (0, 1/2]$, and Assumptions~\ref{asum-set},~\ref{asum-sharp}, and~\ref{asum-samples} hold.
Let $\{u_k\}$, $\{h_k\}$ be {the iterate sequences} generated by stochastic clipped Korpelevich method~\plaineqref{eq-korpelevich-stoch} with stepsizes $\gamma_k$ defined in~\plaineqref{stepsizes-korpelevich}. Let $D_k = \mathbb{E}[\dist^2(h_k, U^*)]$ and $C_F$ be an upperbound on $\mathbb{E}[\|F(h_k)\|]$ then the following results holds:

\textbf{Case $p=2$}. 
Let $\beta_k= \dfrac{2}{a(\frac{2d}{a} + k)}$, with $a = \mu \min \left\{1, \frac{1}{2 (C_F+ \sigma)}\right\}$, $
d =\max\{4\mu,2\sqrt{3}(K_0 + K_1 + K_2)\}$
where $K_0,K_1,$ and $K_2$ are from Proposition~\ref{prop-a}(a). 
Then, the following relation holds
\begin{align}\label{eq-thm-proj-rates-1}
 D_{k+1} \leq  \dfrac{8  d^2 D_0}{a^2 k^2} + \frac{12(\sigma^2 +  K_1 \sigma^{2 \alpha})}{a^2 k}
 \qquad \hbox{for all $k \geq 1$}.
\end{align}

\textbf{Case $p \geq 2$}. Let $\beta_k = \frac{b}{(k+1)^q}$, where $1/2< q < 1$ 
and $0<b \leq \min \left\{\frac{1}{4 \mu}, \frac{1}{2\sqrt{3}(K_0 + K_1 + K_2)}\right\}$.
Then, the following inequality holds for all $k \geq 1$,
\begin{align}\label{eq-thm-proj-rates-2}
\bar{D}_k
&\le \dfrac{ 2^{2(p-2) / p}(1 - q)^{2/p} \left(D_0  + 6b^2 (\sigma^2 + K_1 \sigma^{2 \alpha})  (2 \sigma^2 +1 ) / (2 q -1)\right)^{2/p}}{ (ab)^{2/p} \,\left((k+1)^{1 - q} - 2^{1 - q}\right)^{2/p}},
\end{align}
where $\bar{D}_k = \mathbb{E}[\dist^2 (\bar{u}_k, U^*)]$, $\bar{u}_k = (\sum_{t=0}^{k} \beta_t)^{-1} \sum_{t=0}^k \beta_t u_t$, and  $a=\mu \min \left\{1, \frac{1}{2 (C_F+ \sigma)}\right\}$.
\end{theorem}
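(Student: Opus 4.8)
The plan is to mirror the proof strategy of Theorem~\ref{thm-proj-rates}, but now working from the one-step inequality~\eqref{eq-lemma-korpelevich} of Lemma~\ref{Lemma-Korpelevich} instead of~\eqref{eq-lemma-projection}. First I would take the total expectation on both sides of~\eqref{eq-lemma-korpelevich}, writing $\tilde D_k = \mathbb{E}[v_k]$ where $v_k$ is the Lyapunov quantity defined in Lemma~\ref{Lemma-Korpelevich}. The telescoping structure of~\eqref{eq-lemma-korpelevich} already gives $\sum_k \mathbb{E}[\gamma_{k-1}\dist^p(u_{k-1},U^*)]<\infty$ and control of $\sum_k\mathbb{E}[\|h_k-u_k\|^2]$, so the remaining task is to convert the ``progress'' term $\mathbb{E}[\gamma_k\mu\dist^p(u_k,U^*)]$ into something comparable to $D_k=\mathbb{E}[\dist^2(h_k,U^*)]$ (for $p=2$) or to $\bar D_k$ (for $p\geq2$). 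The key quantitative input is Lemma~\ref{lemma-korpelevich-EF-bound}: since $\mathbb{E}[\|F(h_k)\|]\le C_F$ and the stochastic approximation has variance $\sigma^2$, a conditional-Jensen argument on the clipping factor $\min\{1,1/\|\Phi(h_k,\xi_k^1)\|\}$ yields a deterministic lower bound $\mathbb{E}[\gamma_k\mid\mathcal F_{k-1}]\ge \beta_k\min\{1,\tfrac1{2(C_F+\sigma)}\}$, hence the appearance of $a=\mu\min\{1,\tfrac1{2(C_F+\sigma)}\}$. One must also relate the point at which the operator norm is bounded ($h_k$) to the point appearing in the progress term ($u_k$); this is where the extra term $\tfrac12\|h_{k-1}-u_{k-1}\|^2$ in $v_k$ and the summability of $\|h_k-u_k\|^2$ are used, and where the constant $d=\max\{4\mu,2\sqrt3(K_0+K_1+K_2)\}$ enters, absorbing the $\alpha$-symmetric growth constants from Proposition~\ref{prop-a}(a) when one bounds $\dist^p(u_k,U^*)$ in terms of $\dist^p(h_k,U^*)$ plus $\|h_k-u_k\|$ contributions.

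For \textbf{Case $p=2$}, after the reduction above one arrives at a recursion of the form $D_{k+1}\le (1-a\beta_k)D_k + 6\beta_k^2(\sigma^2+K_1\sigma^{2\alpha})$ up to the $\|h-u\|^2$ corrections, which the choice $\beta_k=\tfrac{2}{a(2d/a+k)}$ is engineered to resolve: this is the classical $1/k$-type stepsize for strongly-contractive recursions, and the standard induction (e.g.\ showing $D_{k+1}\le \tfrac{A}{(2d/a+k)^2}+\tfrac{B}{(2d/a+k)}$ for suitable $A,B$, then simplifying) produces the claimed $\tfrac{8d^2D_0}{a^2k^2}+\tfrac{12(\sigma^2+K_1\sigma^{2\alpha})}{a^2k}$ bound. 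I would prove the recursion-solving step by induction on $k$, choosing the shift $2d/a$ precisely so that the base case and the inductive step both close.

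For \textbf{Case $p\geq2$}, the progress term is $\mathbb{E}[\gamma_k\mu\dist^p(u_k,U^*)]$ with $p$ possibly larger than $2$, so there is no direct contraction; instead I would telescope~\eqref{eq-lemma-korpelevich} from $0$ to $k$ to get $\sum_{t=0}^k a\beta_t\,\mathbb{E}[\dist^p(u_t,U^*)] \le \tilde D_0 + 6(\sigma^2+K_1\sigma^{2\alpha})\sum_{t=0}^k\beta_t^2$, then divide by $\sum_{t=0}^k\beta_t$ and apply Jensen's inequality in two places: once to pass from the weighted average of $\mathbb{E}[\dist^p(u_t,U^*)]$ to $\mathbb{E}[\dist^p(\bar u_k,U^*)]$ (convexity of $\dist^p$ in the iterate, valid for $p\ge1$ since $U^*$ is closed convex and the averaging weights $\beta_t/\sum\beta_s$ sum to one), and once via $x\mapsto x^{2/p}$ concavity to turn $\mathbb{E}[\dist^p(\bar u_k,U^*)]^{2/p}$ lower bounds into the stated $\bar D_k=\mathbb{E}[\dist^2(\bar u_k,U^*)]$ upper bound. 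With $\beta_k=b/(k+1)^q$ and $1/2<q<1$ one has $\sum_{t=0}^k\beta_t^2\le b^2/(2q-1)$ and $\sum_{t=0}^k\beta_t\ge b((k+1)^{1-q}-2^{1-q})/(1-q)$, which plugged in gives exactly~\eqref{eq-thm-proj-rates-2}; the constraint $0<b\le\min\{\tfrac1{4\mu},\tfrac1{2\sqrt3(K_0+K_1+K_2)}\}$ is what makes the $\|h_k-u_k\|^2$ and higher-order $\alpha$-symmetric error terms absorbable so that the telescoped inequality is clean.

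\textbf{The main obstacle} I anticipate is the passage from $\dist^p(u_k,U^*)$ (which is what the contraction/telescoping delivers) to the reported performance measures in terms of $h_k$ (for $p=2$) and $\bar u_k$ (for $p\ge2$), while keeping all constants explicit: one must carefully use the $\alpha$-symmetric inequality~\eqref{alpha-property} to trade $\|u_k-u^*\|$ against $\|h_k-u^*\|$ and $\|h_k-u_k\|$, control the cross terms using the extra $\tfrac12\|h_{k-1}-u_{k-1}\|^2$ slack already built into $v_k$, and verify that the stepsize smallness condition on $b$ (resp.\ the shift $2d/a$) exactly suffices. The randomness of $\gamma_k$ is a secondary but real difficulty — every inequality must be taken in conditional expectation given $\mathcal F_{k-1}\cup\{\xi_k^1\}$ first (so that $\gamma_k$ is measurable and the lower bound $\mathbb{E}[\gamma_k\mid\mathcal F_{k-1}]\ge a\beta_k/\mu$ applies) before taking total expectations, and one must check that Lemma~\ref{lemma-korpelevich-EF-bound}'s hypothesis $\alpha\le1/2$ is genuinely needed here (it is, because the RHS of~\eqref{section-4-F-bound} has finite expectation only for $\alpha\le1/2$, which is what makes $C_F$ — and therefore $a$ — well-defined).
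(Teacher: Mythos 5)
Your high-level architecture is the paper's (one-step descent from the Lemma~\ref{Lemma-Korpelevich} machinery, Lemma~\ref{lemma-korpelevich-EF-bound} to make $C_F$ and hence $a$ well defined, Lemma~\ref{Lemma7-stich} for $p=2$, telescoping plus two applications of Jensen and Lemma~\ref{lemma-kq-series-rates} for $p\geq 2$), but the central mechanism you propose for handling the random stepsize has a genuine gap. The claimed conditional bound $\mathbb{E}[\gamma_k\mid\mathcal{F}_{k-1}]\ge \beta_k\min\{1,\tfrac{1}{2(C_F+\sigma)}\}$ cannot hold: $C_F$ bounds only the \emph{total} expectation $\mathbb{E}[\|F(h_k)\|]$, whereas conditionally on $\mathcal{F}_{k-1}$ the quantity $\|F(h_k)\|$ is a fixed but unbounded number, so no deterministic conditional lower bound on $\gamma_k$ in terms of $C_F$ is available. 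Moreover, even granting such a bound, your telescoped progress term $\mathbb{E}[\gamma_k\,\dist^p(u_k,U^*)]$ does not decouple by conditioning, since both $\gamma_k$ and $u_k$ are driven by the same sample $\xi_k^1$. The paper resolves both points differently: (i) it first converts $\dist^p(u_k,U^*)$ into $\dist^p(h_k,U^*)$ using only the triangle inequality $\dist(h_k,U^*)\le\|h_k-u_k\|+\dist(u_k,U^*)$, the bound $\|h_k-u_k\|\le\gamma_k\|\Phi(h_k,\xi_k^1)\|\le\beta_k\le1$, and Lemma~\ref{inequality-hoed-2}, paying the factor $2^{1-p}$ and an extra $2\mu\gamma_k\|h_k-u_k\|^2$ that is absorbed precisely because $\beta_k\le\tfrac{1}{4\mu}$ (this is where the $4\mu$ in $d$ and in the bound on $b$ comes from — not from an $\alpha$-symmetric trade; the $2\sqrt{3}(K_0+K_1+K_2)$ part only keeps $1-6\beta_k^2(K_0+K_1+K_2)^2\ge\tfrac12$ in the descent inequality of Lemma~\ref{Lemma-Korpelevich}); and (ii) with $\dist^p(h_k,U^*)$ now $\mathcal{F}_{k-1}$-measurable, it lower bounds $\mathbb{E}[\gamma_k\,\dist^p(h_k,U^*)]$ in \emph{total} expectation via the Markov-inequality event $A_k=\{\|F(h_k)\|+\|e_k^1\|\le 2(\mathbb{E}[\|F(h_k)\|]+\mathbb{E}[\|e_k^1\|])\}$ with $\mathbb{P}(A_k)\ge\tfrac12$, exactly as in the proof of Theorem~\ref{thm-proj-rates}; this is what produces $a=\mu\min\{1,\tfrac{1}{2(C_F+\sigma)}\}$ and the factor $2^{2-p}$ in \eqref{eq-thm-rates-korp-10}, hence $2^{2(p-2)/p}$ in the final bound.

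A second, smaller issue: you propose to start by taking total expectations of \eqref{eq-lemma-korpelevich}, but that relation is stated for a \emph{fixed} $u^*$ and is already conditioned, so it does not directly yield a recursion in $D_k=\mathbb{E}[\dist^2(h_k,U^*)]$. The paper instead returns to the pre-expectation inequality \eqref{eq-lm2-korp-15}, substitutes the moving point $y=P_{U^*}(h_k)$, and uses $\dist^2(h_{k+1},U^*)\le\|h_{k+1}-P_{U^*}(h_k)\|^2$ before taking expectations. With these two repairs (the $h_k$ versus $u_k$ conversion with the $\beta_k\le\tfrac1{4\mu}$ absorption, and the total-expectation event argument in place of a conditional bound on $\gamma_k$), your outline matches the paper's proof; as written, however, the key step that introduces $a$ and the constants in $d$ and $b$ would not go through.
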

Similarly to the proof of Theorem~\ref{thm-proj-rates}, to derive the convergence rate in terms of $\mathbb{E}[\dist^2(h_k, U^*)]$ we need to relate the progress at each iteration, measured by $\mathbb{E}[\gamma_k \dist^p(h_k, U^*)]$ to $\mathbb{E}[\dist^2(h_k, U^*)]$. Using the boundedness of $\mathbb{E}[\|F(h_k)\|]$ and applying Jensen's inequality, which holds for $p \geq 2$ we obtain the final rates. 
The proof of Theorem~\ref{thm-korpelevich-rates} is provided in Appendix~\ref{sec-thm44}.

For the simplicity of convergence rate comparison, assume $2 (C_F +\sigma) \geq 1$ and $ K_0 + K_1 + K_2 \geq \frac{2\mu}{\sqrt{3}}$. Then by denoting $K = K_0 + K_2 + K_3$, from Theorem~\ref{thm-korpelevich-rates}, we obtain $\mathcal{O}(\frac{(C_F + \sigma)^2 K^2 D_0}{\mu^2 k^2} + \frac{(\sigma^2+ K_1 \sigma^{2 \alpha} )(C_F + \sigma)^2}{\mu^2 k})$ last iterate convergence for $p=2$, and $\mathcal{O}\left(\frac{\sigma^{4/p} (D_0 + \sigma^2 + K_1\sigma^{2\alpha})^{2/p} (C_F +\sigma)^{2/p}}{\mu^{2/p} k^{2(1-q)/p}}\right)$ average (or best) iterate convergence rate for  $p>2$ with $q\in(1/2,1)$.  In both cases $p=2$ and $p>2$ the convergence rate of clipped stochastic projection method in Theorem~\ref{thm-proj-rates} and the rate of stochastic clipped Korpelevich method in Theorem~\ref{thm-korpelevich-rates} have the same dependency in $k$. For $p = 2$ the rate from Theorem~\ref{thm-korpelevich-rates} matches the rate $\mathcal{O}(\frac{1}{k})$ obtained in Proposition 5~\citep{kannan2019optimal} for stochastic Korpelevich method under stronger assumption on strong pseudo monotonicity and Lipschitz continuity of the operator. Interestingly, for $p=2$, the parameter $\mu$ appears in the rate as $\tfrac{1}{\mu^2}$ in both \citet{kannan2019optimal} and in our Theorem~\ref{thm-korpelevich-rates}, which is known to be the optimal dependence on $\mu$~\citep{beznosikov2022smooth}. For $p > 2$, the obtained rate is new in stochastic case and generalizes the results in deterministic setting for Lipschitz continuous operators~\citep{DBLP:conf/iclr/WeiLZL21}  and $\alpha$-symmetric operators~\citep{vankov2024icml}. 

\section{Numerical Experiments}
\label{Numerical}
We study the performance of the stochastic clipped projection and Korpelevich methods, for different values of parameters $\a>0$ and $p>0$. Despite the absence of analysis, we also implement the  stochastic clipped Popov method with $\gamma_k = \beta_k \min \{1, \frac{1}{\|F(h_k)\|}, \frac{1}{(\|u_k - h_{k-1}\| + 1)^{\a / (1 - \alpha)}}\}$:
\begin{equation*}
u_{k+1} =  P_{U}(u_k - \g_k \Phi(h_k, \xi_k)), \quad h_{k+1} = P_{U}(u_{k+1} - \g_{k+1}  \Phi(h_{k}, \xi_k)),
\end{equation*}
where $u_0, h_0 \in U$ are arbitrary deterministic initial.
We consider an unconstrained minmax game:
\[ \min_{u_1} \max_{u_2}  \frac{1}{p} \|u_1\|^p + \langle u_1, u_2 \rangle - \frac{1}{p} \|u_2\|^p ,\]
with  $p>1$, and $u_1 \in \mathbb{R}^{d}, u_2 \in \mathbb{R}^{d}$. Then, the corresponding operator $F: \mathbb{R}^{2d} \rightarrow \mathbb{R}^{2d}$ is defined by
\begin{align}
    \label{eq-example-quasi-sharp}
    F(u) =  \begin{bmatrix} \|u_1\|^{p - 2} u_1 + u_2 \\  \|u_2\|^{p - 2} u_2 -  u_1 \end{bmatrix}.
\end{align}

We assume that we have an access only to a noise evaluation of the corresponding operator and aim to solve unconstrained SVI$(\mathbb{R}^{2d}, F)$ with the following stochastic operator $\Phi(u, \xi) = F(u) + \xi$,
 where $\xi$ is a random vector with independent zero-mean Gaussian entries and with variance $\sigma^2 = 1$. 
 Then,  $F(u) = \mathbb{E}[\Phi(u, \xi)]$ is an $\a$-symmetric and $p$-quasi sharp operator due to~\cite{vankov2024icml}. 
We set these parameters to be $\{(\alpha \approx 0.33 , p = 2.5), (\alpha \approx 0.5, p = 3.0), (\alpha \approx 0.8, p = 6.0)\}$. 
We also compare our results with the projection method that uses the same sample clipping, meaning stepsizes $\gamma_k$ clip $\|\Phi(u_k, \xi_k)\|$ instead of a different sample $\|\Phi(u_k, \xi_k^2)\|$.

\begin{figure*}[hbt!]
\centering
\subfigure[$(\alpha \approx 0.33, p = 1.5)$]{
\includegraphics[width=.23\textwidth]{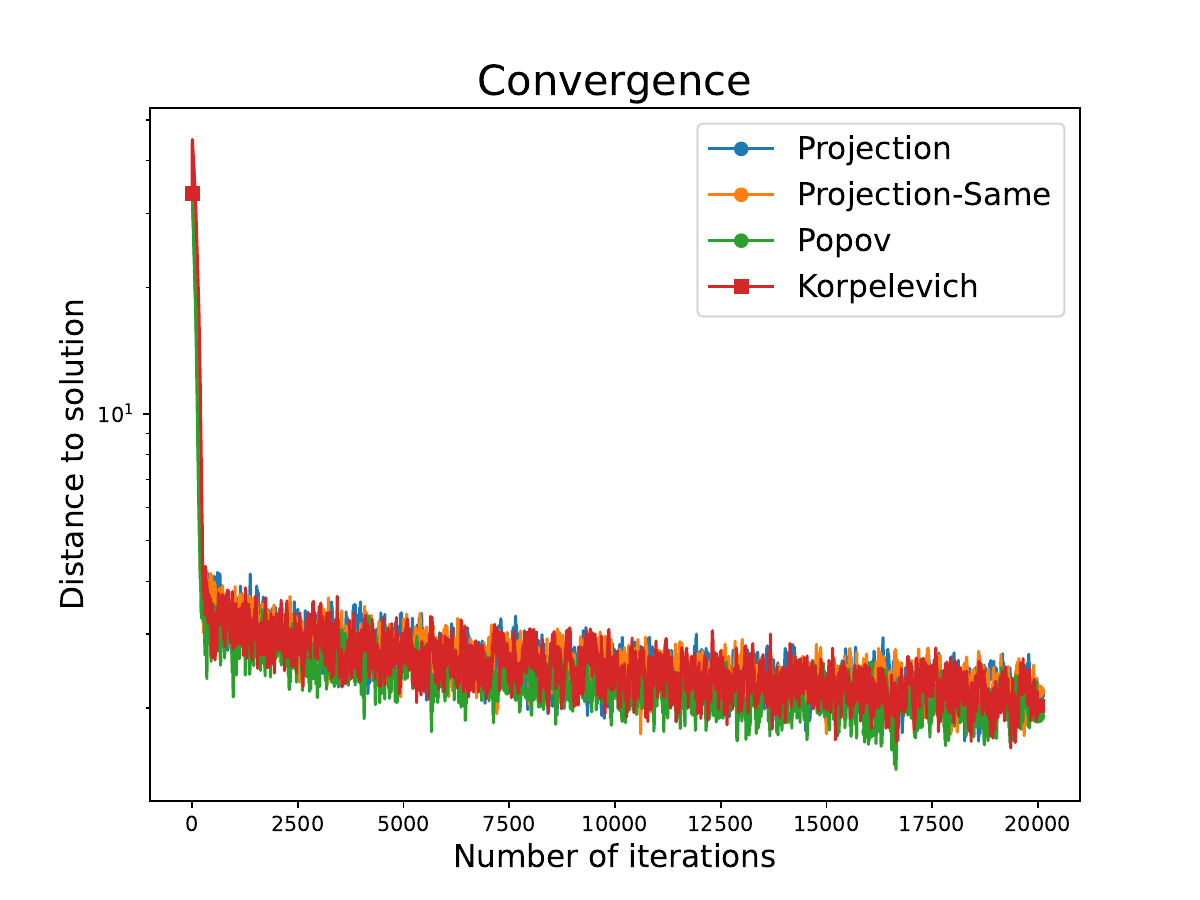}
}
\subfigure[$(\alpha \approx 0.33, p = 2.5)$]{
\includegraphics[width=.23\textwidth]{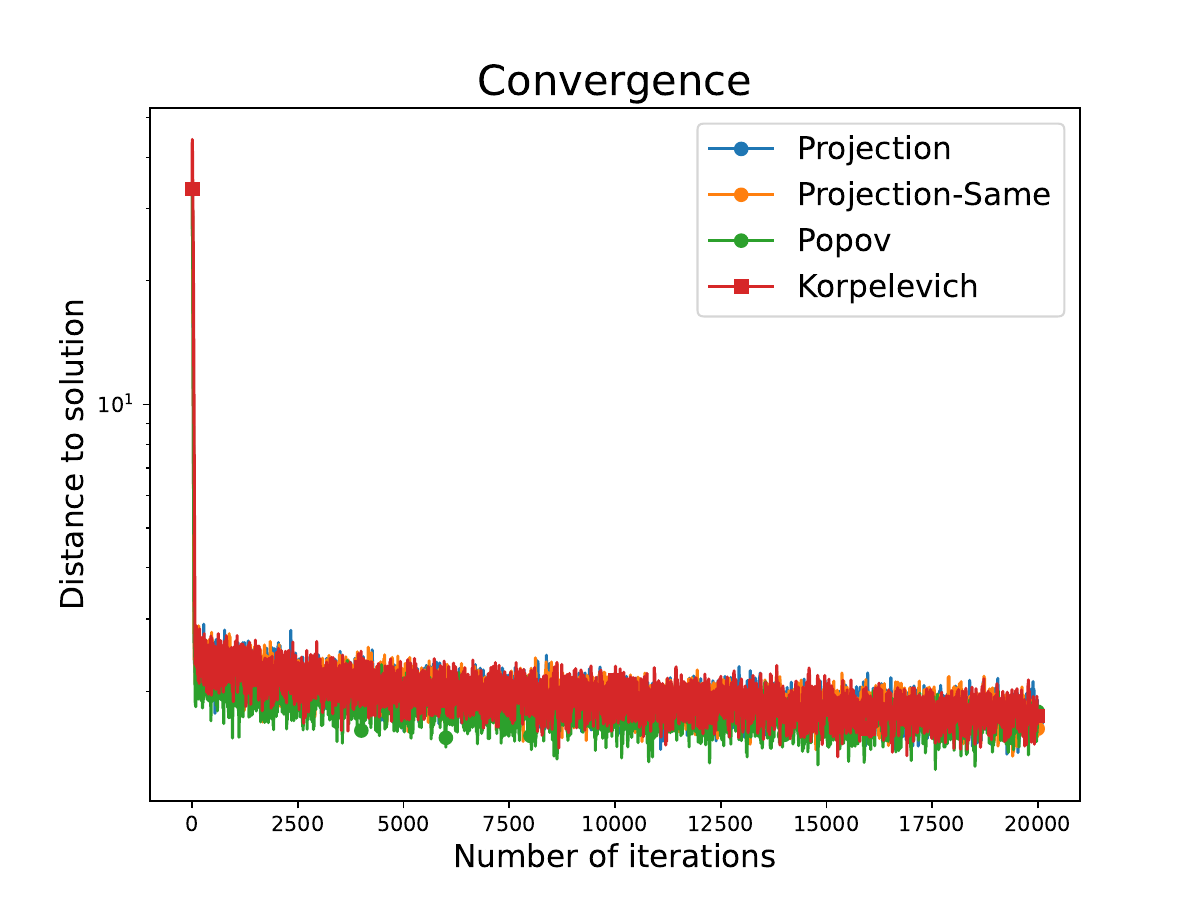}
}
\subfigure[$(\alpha \approx 0.8, p = 4.0)$]{
\includegraphics[width=.23\textwidth]{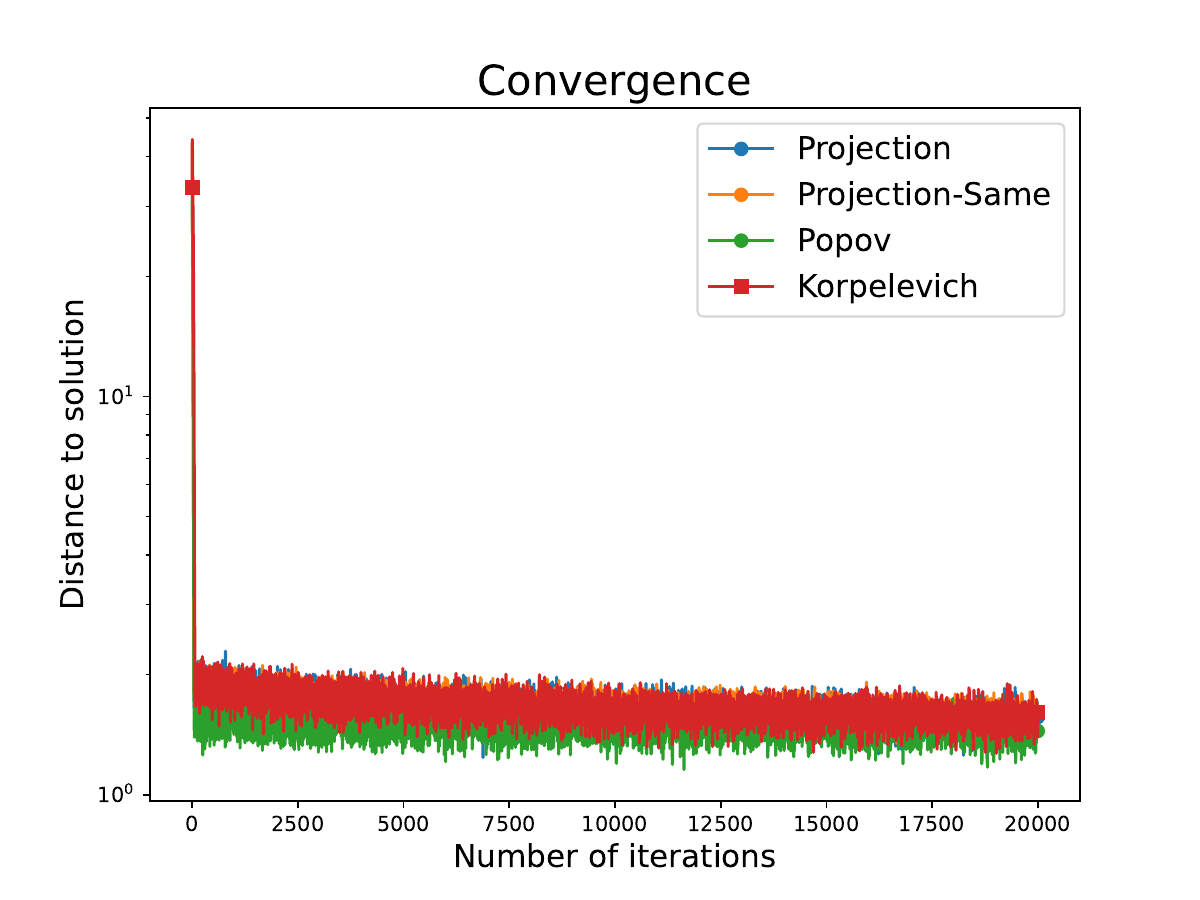}
}
\subfigure[$(\alpha \approx 0.8, p = 6.0)$]{
\includegraphics[width=.23\textwidth]{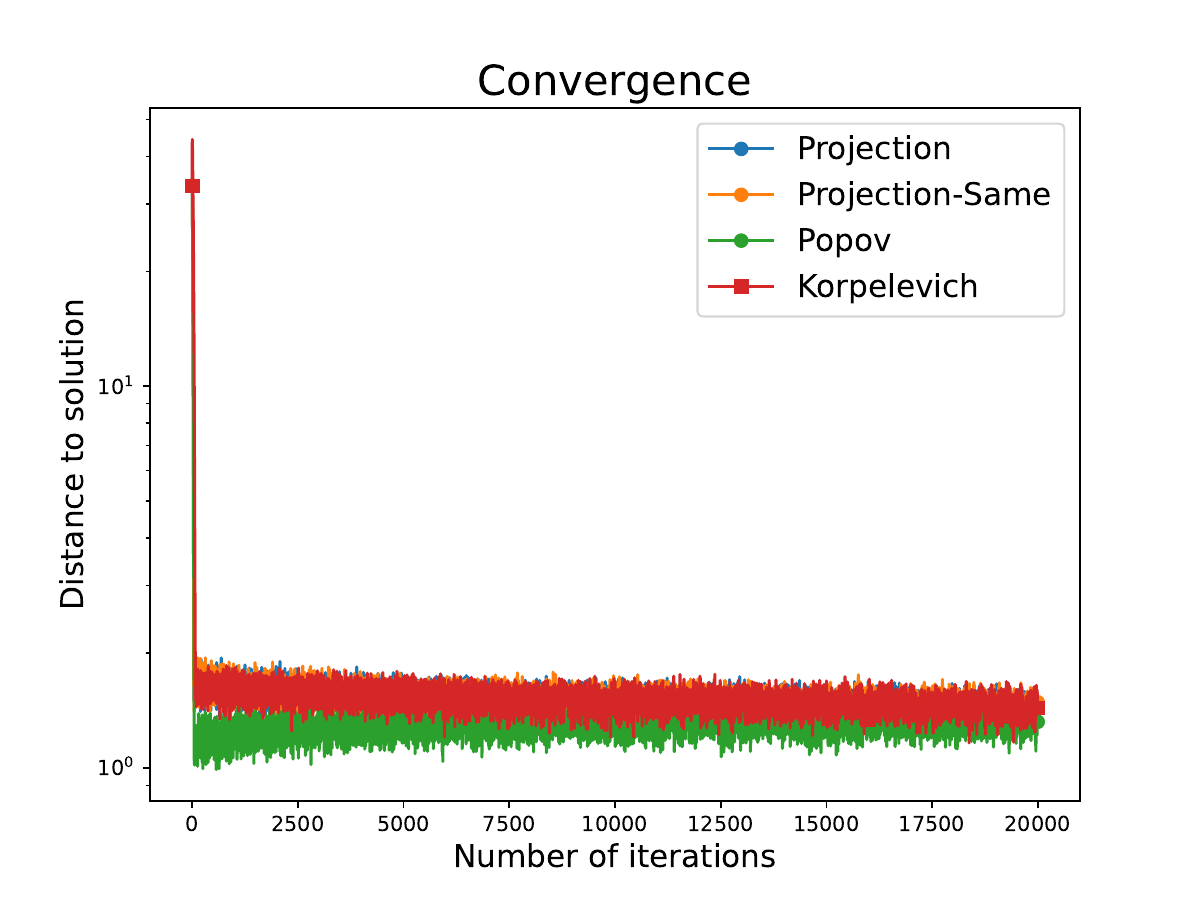}
}
\caption{Comparison of the clipped stochastic projection, same-sample projection, Korpelevich, and Popov methods with $\beta_k = 100/(100 + k^{1/2 + \epsilon})$. 
}
\label{fig:theory}
\end{figure*}

\begin{figure*}[hbt!]
\centering
\subfigure[$(\alpha \approx 0.33, p = 1.5)$]{
\includegraphics[width=.23\textwidth]{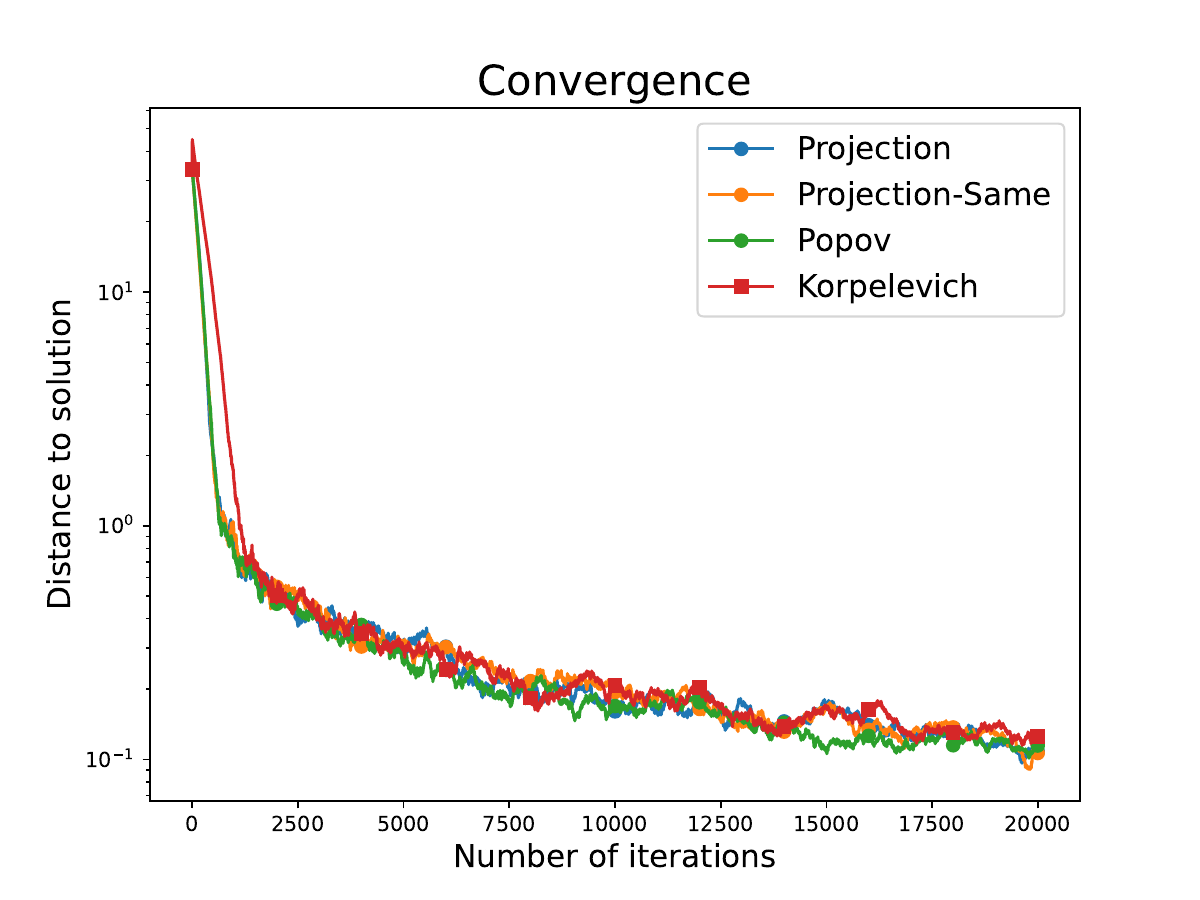}
}
\subfigure[$(\alpha \approx 0.33, p = 2.5)$]{
\includegraphics[width=.23\textwidth]{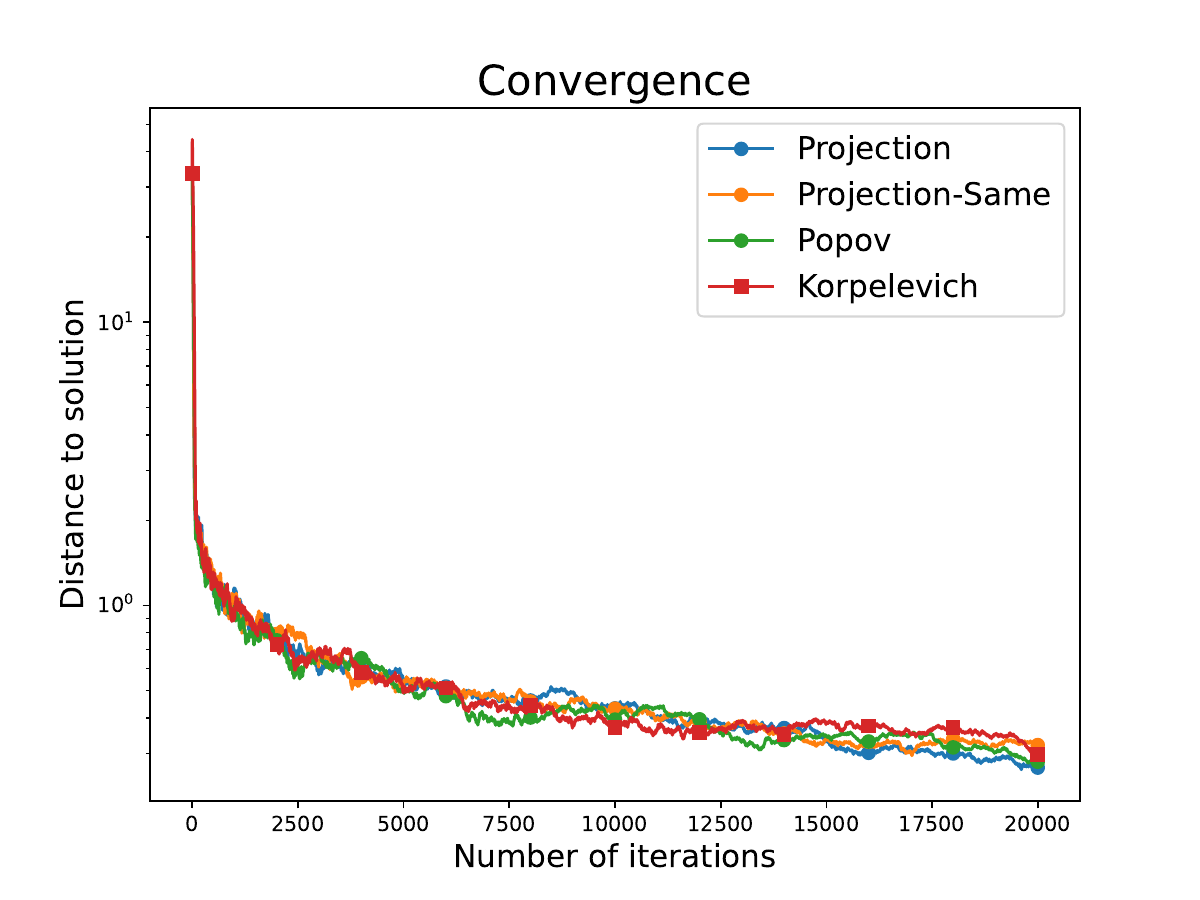}
}
\subfigure[$(\alpha \approx 0.8, p = 4.0)$]{
\includegraphics[width=.23\textwidth]{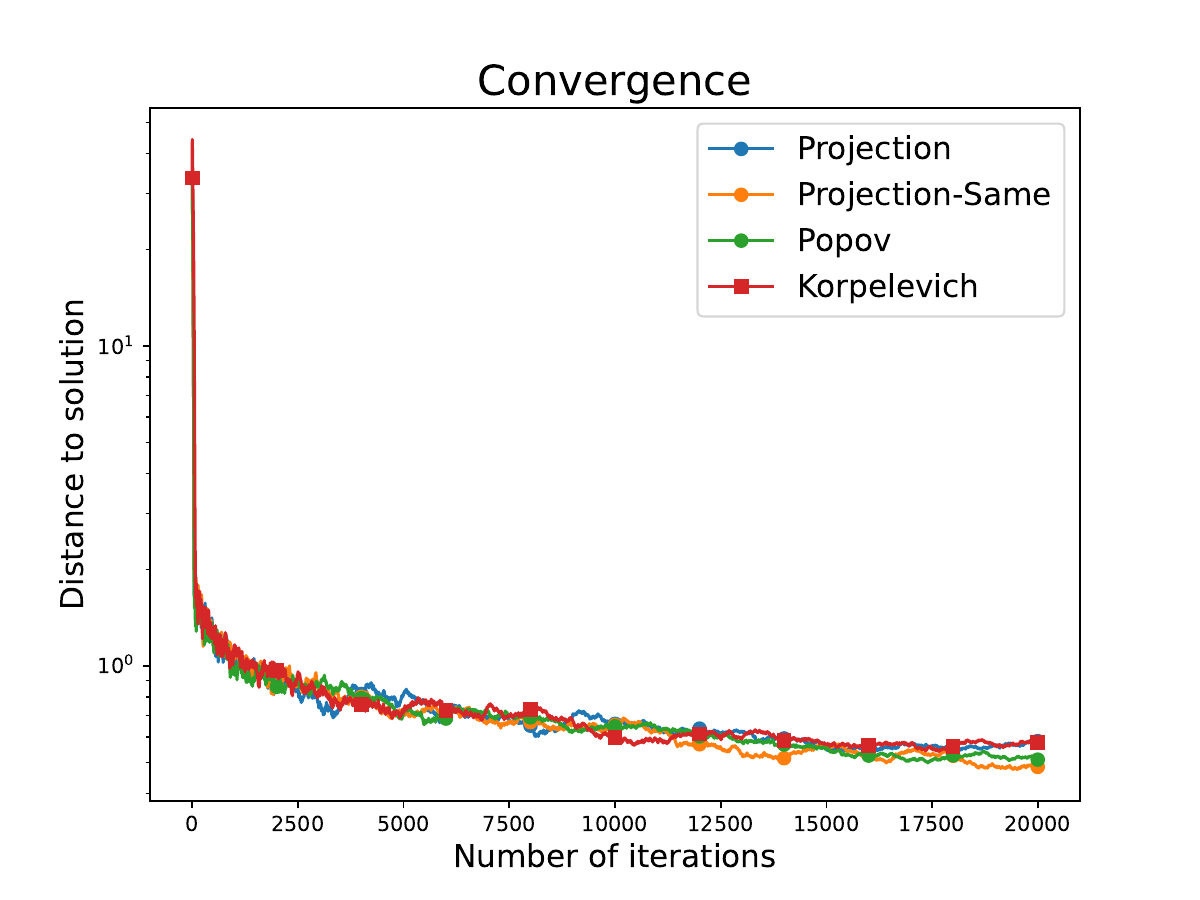}
}
\subfigure[$(\alpha \approx 0.8, p = 6.0)$]{
\includegraphics[width=.23\textwidth]{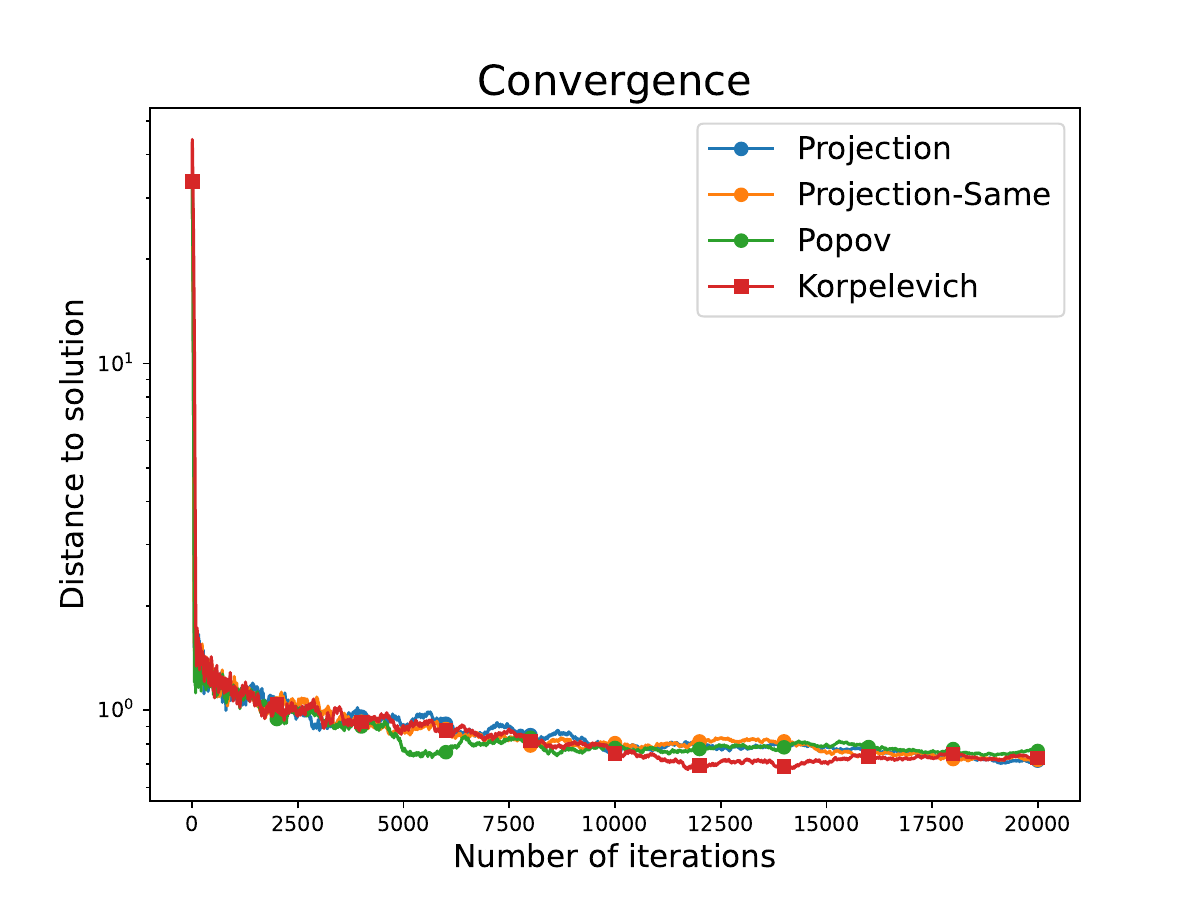}
}
\caption{Comparison of the clipped stochastic projection, same-sample projection, Korpelevich, and Popov methods with $\beta_k = 100/(100 + k^{1 - \epsilon})$. 
}
\label{fig:fair}
\end{figure*}

In Figure ~\ref{fig:theory}, we plot an average distance to solution from the current iterate over twenty runs to the solution set as a function of the number of iterations. In particular, the stepsizes for clipped stochastic projection and Korpelevich methods are chosen according to Theorems \ref{thm-proj-rates} and \ref{thm-korpelevich-rates}, respectively, with $\beta_k = \frac{100}{100 + k^{q}}$ for $q = 1/2 + \epsilon$ with $\epsilon> 0$. Note that, according to  Theorems~\ref{thm-proj-rates} and~\ref{thm-korpelevich-rates}, the parameter $q$ should be greater than $1/2$; meanwhile, the rates in these theorems are better for smaller choices of $q$. We also set $\beta_k =\frac{100}{100 + k^{q}}$ for  stochastic clipped  Popov method and the stochastic clipped  projection method using the same sample $\Phi(u_k,\xi_k)$ for clipping.  

Based on this experiment, we made three important observations. Firstly, the  stochastic clipped projection method and the same-sample stochastic clipped  projection method show similar results, despite the fact that the latter has a biased error.
We speculate that this is because the biased error is relatively small, and in practice dominated by the term $\gamma_k \dist^{p}(u_k, U^*)$.
Secondly, although in the stochastic Lipschitz SVI setting the Korpelevich method outperforms the projection method, we do not observe this advantage in the generalized smooth SVI setting. This aligns with our theoretical results, since both methods have the same complexity and require two oracle calls per iteration. Moreover, even in the standard Lipschitz continuous strongly monotone case, both methods achieve the same order $O(1/k)$ rate in the leading stochastic term, with the Korpelevich method enjoying a smaller non-leading term because it can use a larger stepsize of $\tfrac{1}{L}$~\cite{beznosikov2022smooth} compared to $\tfrac{\mu}{L^2}$ for the projection method~\cite{loizou2021stochastic}. However, in the generalized smooth case, where stepsizes are clipped, the Korpelevich method no longer benefits from larger stepsizes.

Next, we investigate the performance of the methods for larger values of $q$. In Figure~\ref{fig:fair}, we set $q=1 - \epsilon$, $\beta_k = \frac{100}{100 + k^{1 - \epsilon}}$, and run all four methods for the same problem parameter setting. We observe that for all considered $\a$, despite the theory, a larger choice of $q$ improved the performance of all methods in the $\sigma$-neighborhood.

For the same setting, in Figures~\ref{fig:theory-1} and~\ref{fig:fair-1}, now we plot the distance to the solution from  the average iterate $\bar{u_k} = (\sum_{i=0}^k \beta_i)^{-1} \sum_{i=0}^k \beta_i u_i$. In terms of average iterates, we observed that smaller values of $q$ are preferable. Interestingly, the clipped projection methods outperform the clipped Korpelevich method, even though both enjoy the same convergence guarantee of order $\mathcal{O}(1/k^{2(1-q)/p})$.
\begin{figure*}[hbt!]
\centering
\subfigure[$(\alpha \approx 0.33, p = 1.5)$]{
\includegraphics[width=.23\textwidth]{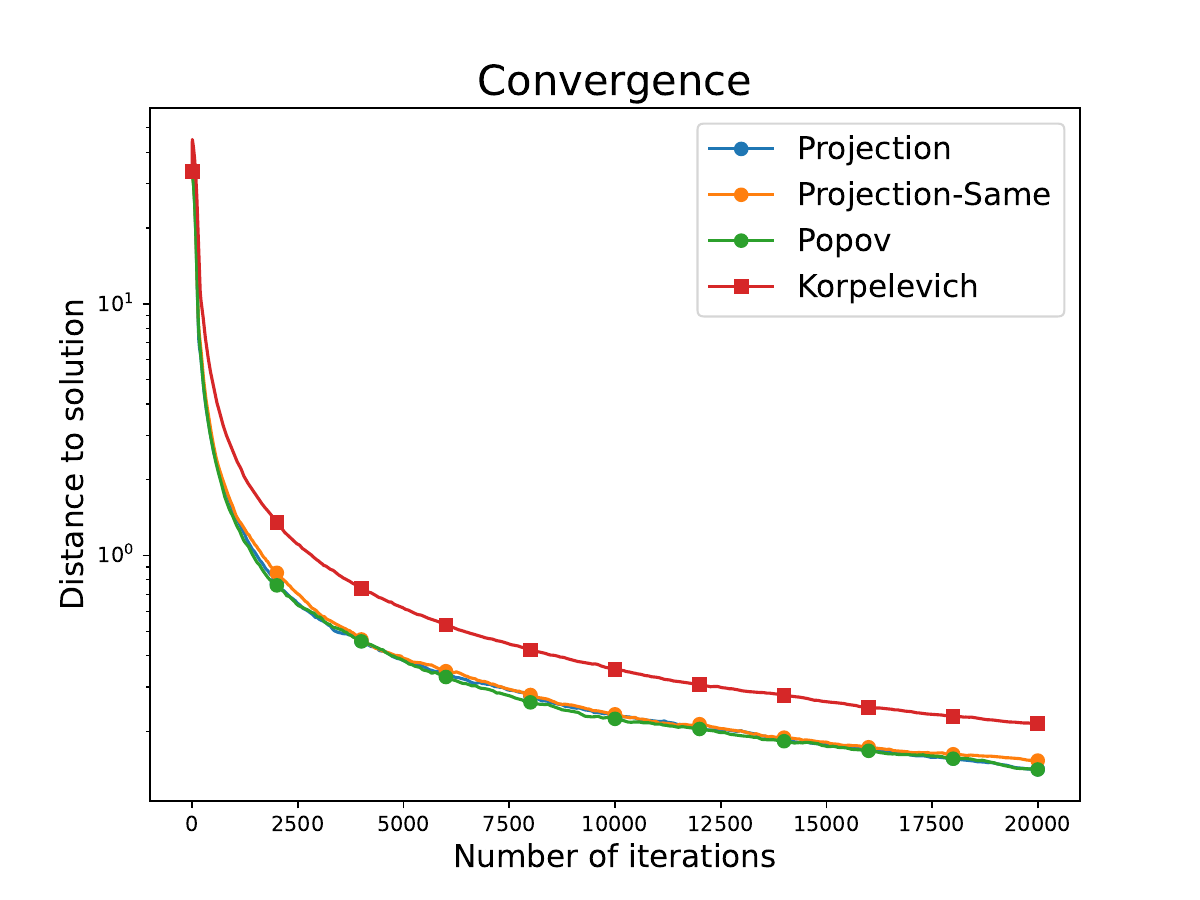}
}
\subfigure[$(\alpha \approx 0.33, p = 2.5)$]{
\includegraphics[width=.23\textwidth]{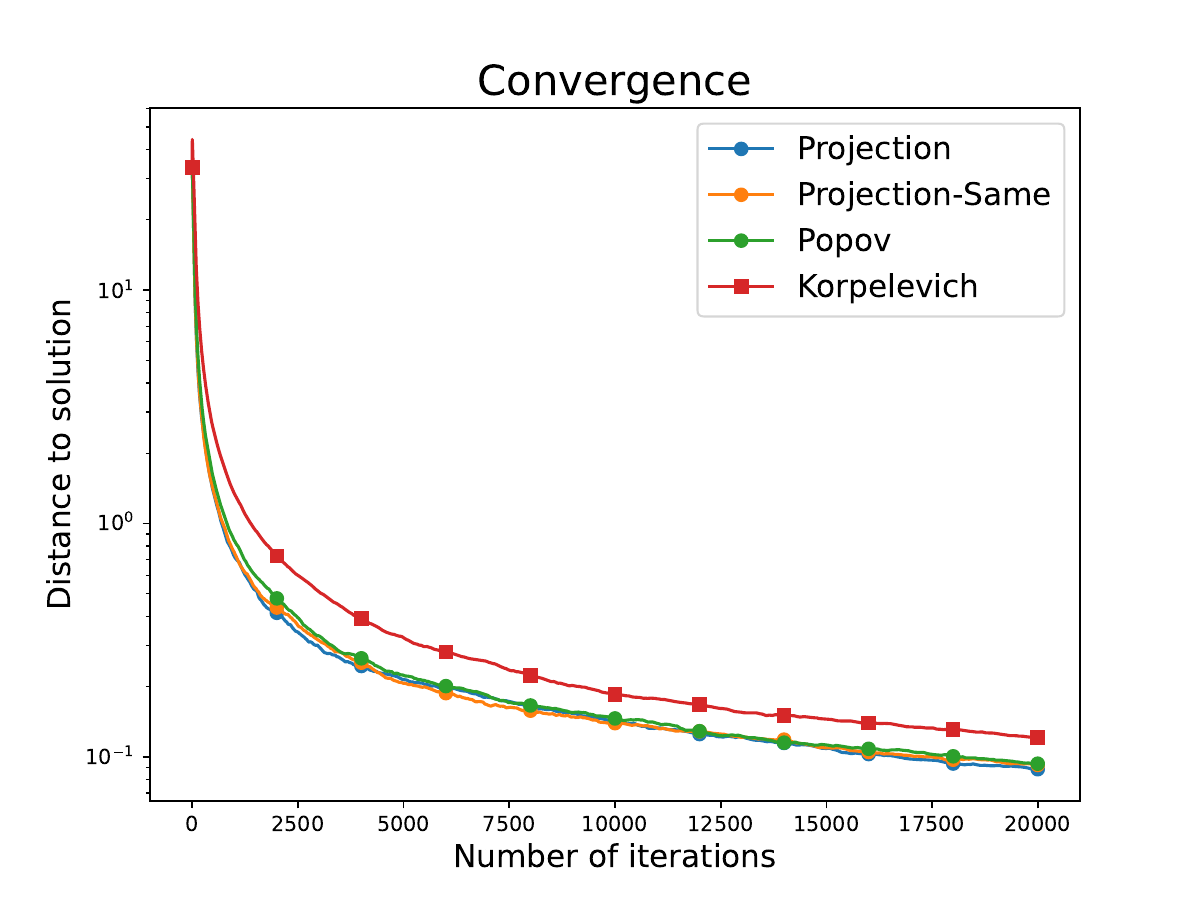}
}
\subfigure[$(\alpha \approx 0.8, p = 4.0)$]{
\includegraphics[width=.23\textwidth]{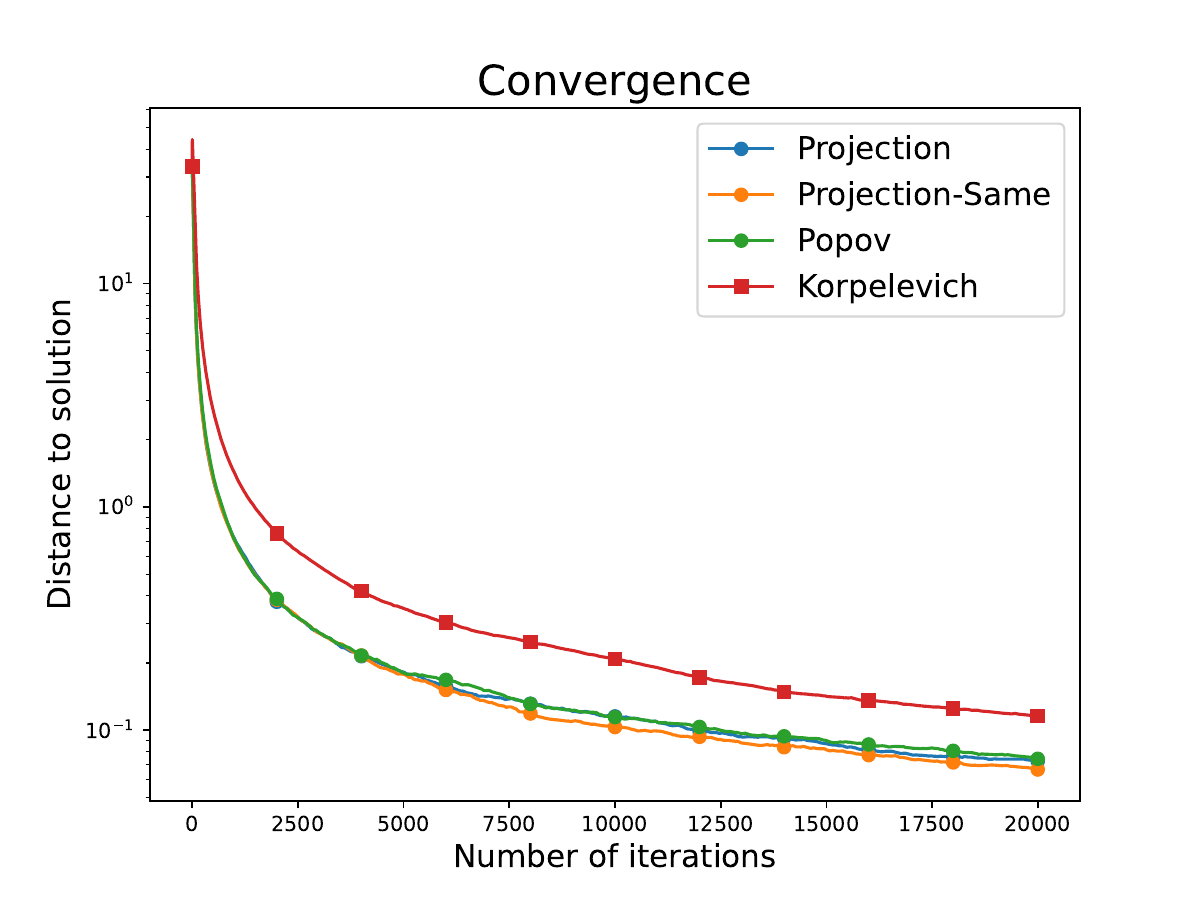}
}
\subfigure[$(\alpha \approx 0.8, p = 6.0)$]{
\includegraphics[width=.23\textwidth]{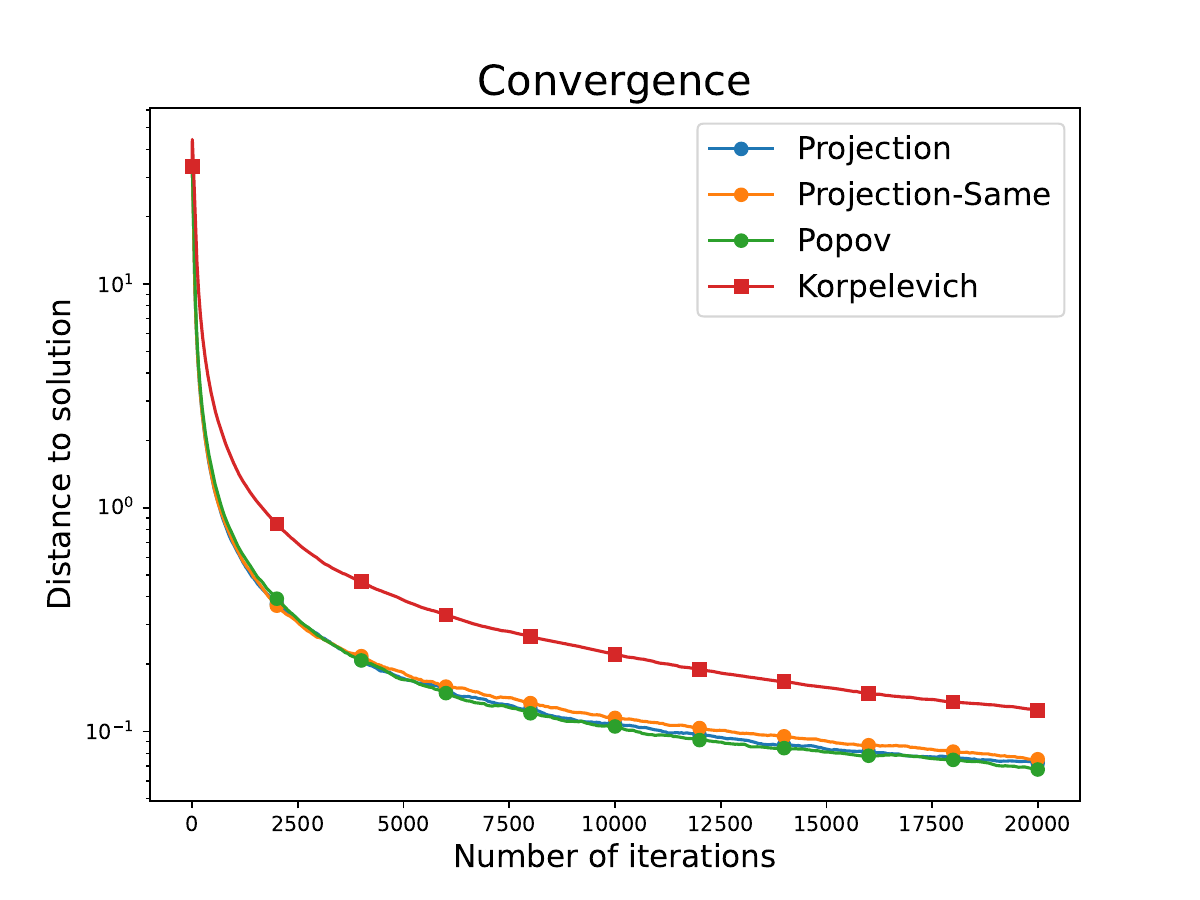}
}
\caption{Comparison of the clipped stochastic projection, same-sample projection, Korpelevich, and Popov methods with $\beta_k = 100/(100 + k^{1/2 + \epsilon})$ for averaged iterates. 
}
\label{fig:theory-1}
\end{figure*}

\begin{figure*}[hbt!]
\centering
\subfigure[$(\alpha \approx 0.33, p = 1.5)$]{
\includegraphics[width=.23\textwidth]{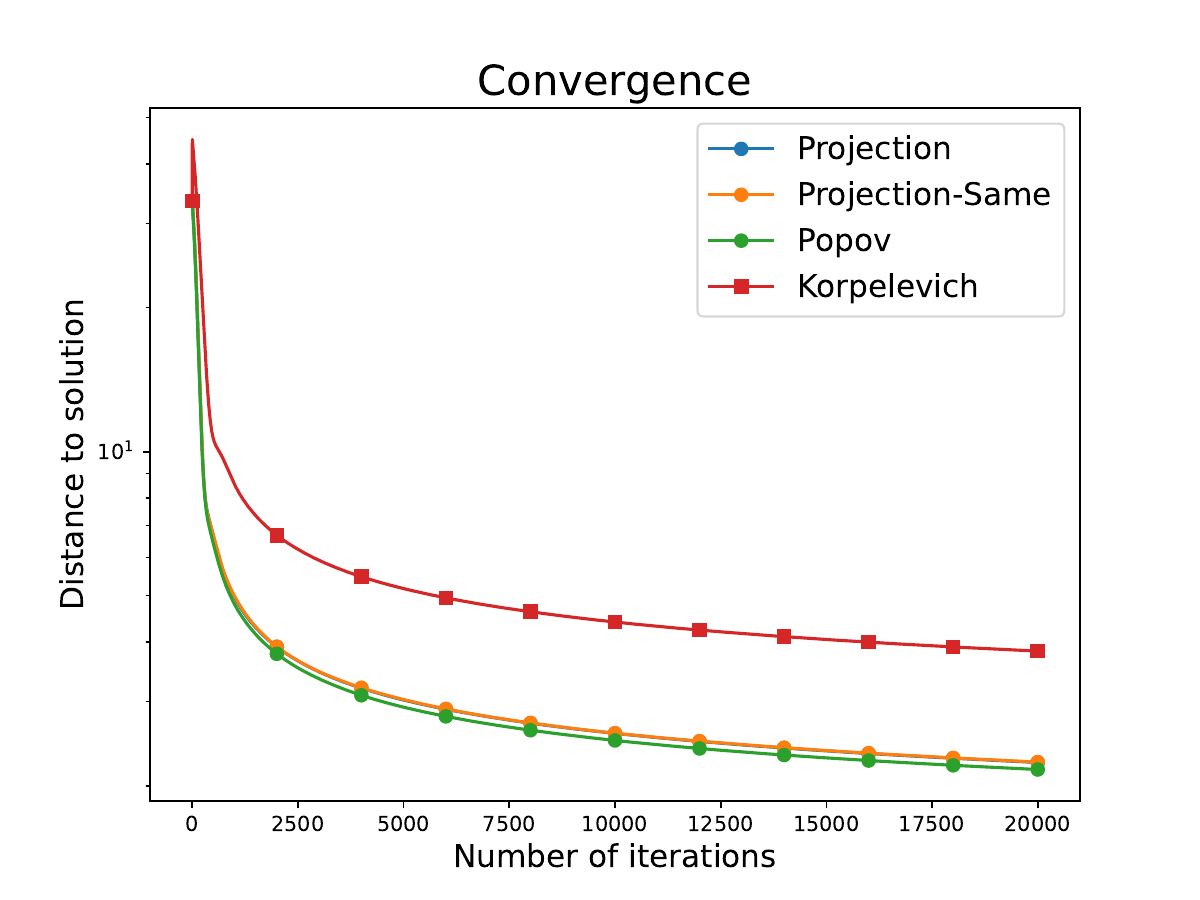}
}
\subfigure[$(\alpha \approx 0.33, p = 2.5)$]{
\includegraphics[width=.23\textwidth]{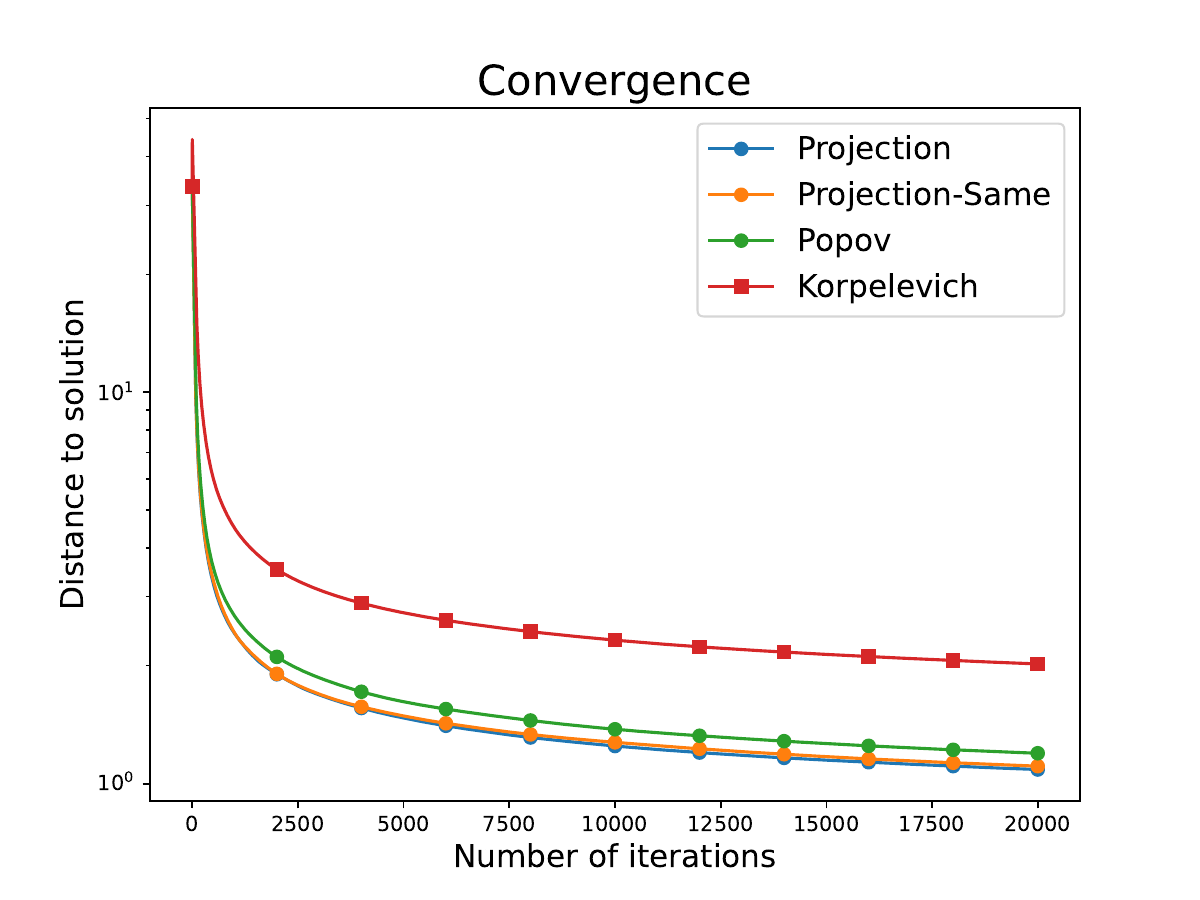}
}
\subfigure[$(\alpha \approx 0.8, p = 4.0)$]{
\includegraphics[width=.23\textwidth]{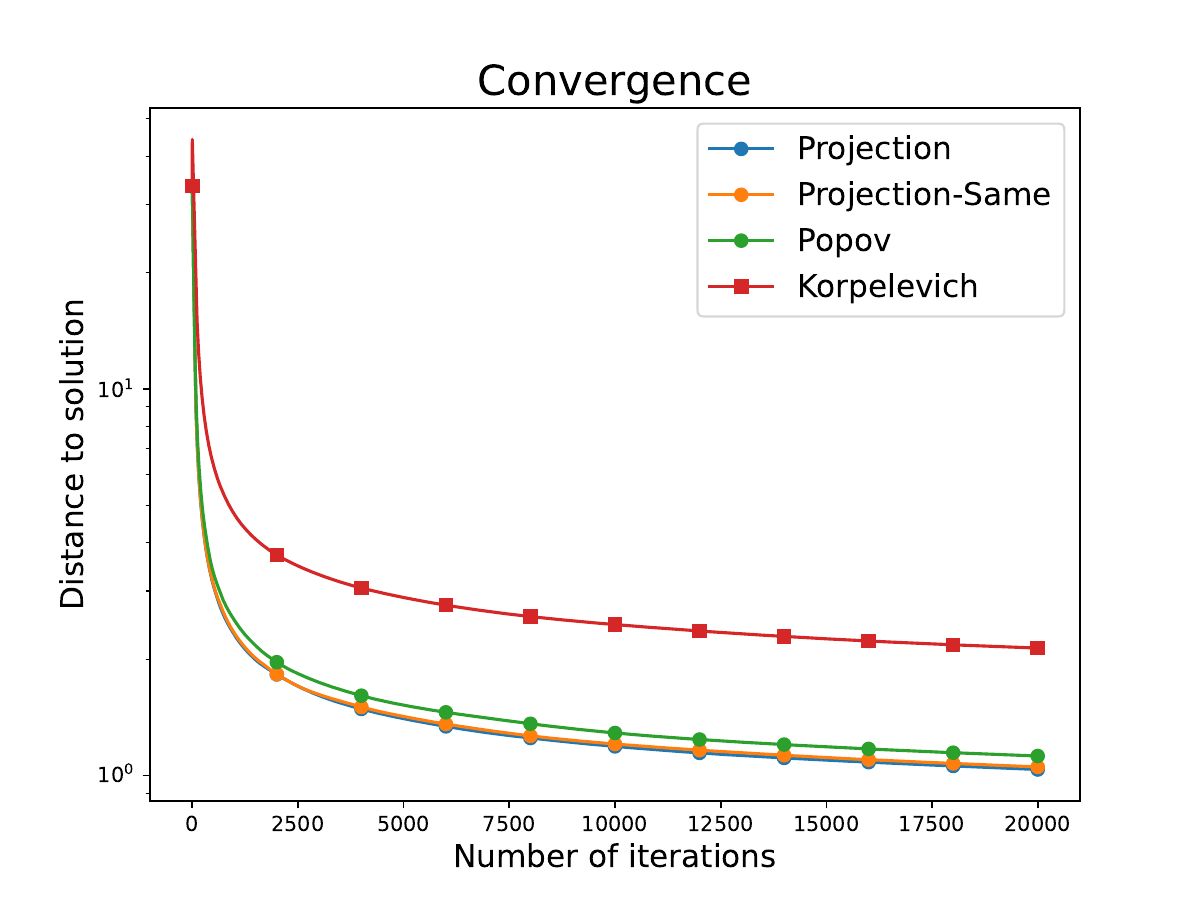}
}
\subfigure[$(\alpha \approx 0.8, p = 6.0)$]{
\includegraphics[width=.23\textwidth]{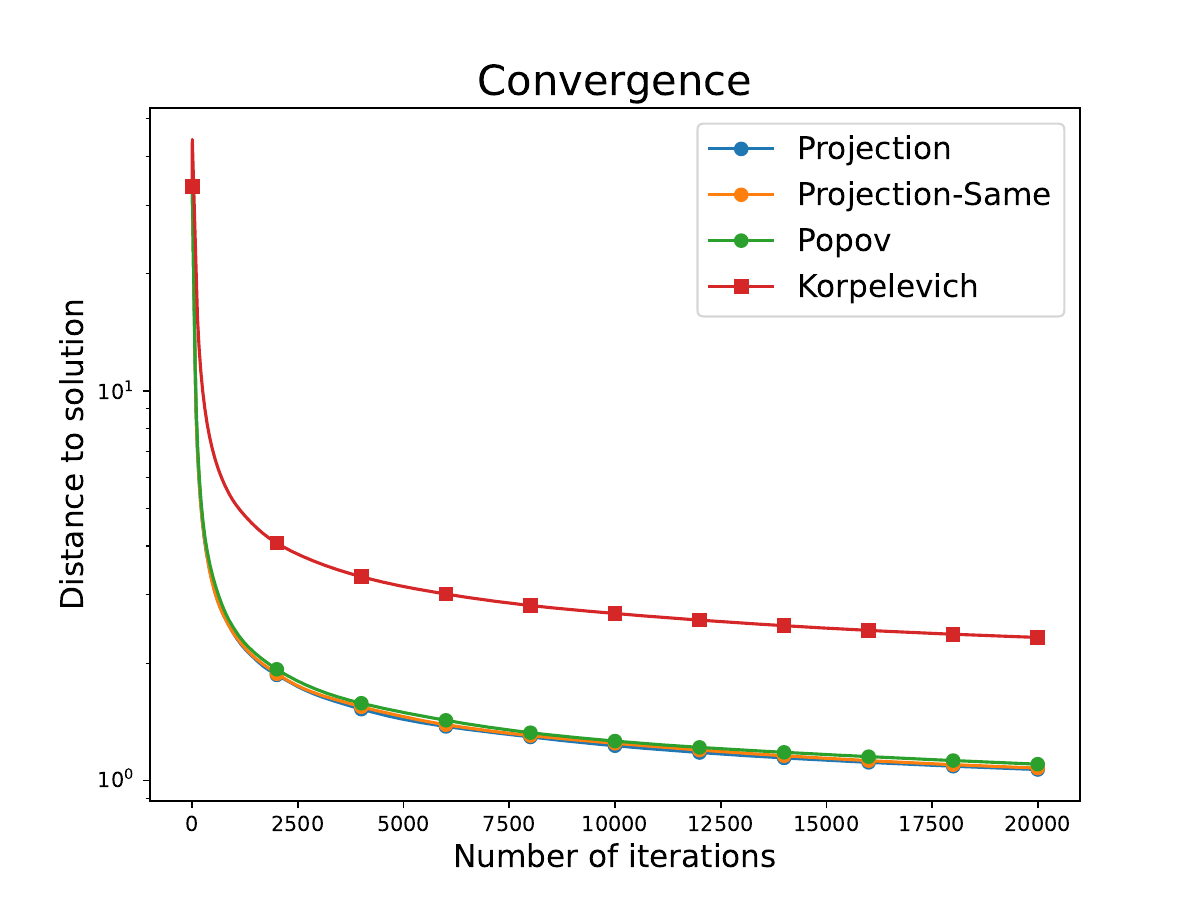}
}
\caption{Comparison of the clipped stochastic projection, same-sample projection, Korpelevich, and Popov methods with $\beta_k = 100/(100 + k^{1 - \epsilon})$ for averaged iterates. 
}
\label{fig:fair-1}
\end{figure*}

\section{Conclusion}
\label{Conclusion}
This paper studied the SVI problem under generalized smooth and structured non-monotone assumptions. Specifically, we consider $\a$-symmetric and $p$-quasi-sharp operators, a class of generalized smooth and structured non-monotone operators for SVIs. For this wide class of operators, we proved the first-known almost sure convergence of stochastic clipped projection and Korpelevich methods for all parameters $p$. We also provided $\mathcal{O}(1/k)$ convergence rate for both considered methods when the operator is $p$-quasi sharp with $p=2$. For $p >2$ we provided $\mathcal{O}(k^{-2(1 - q)/p})$ average (or best) iterate convergence rate for both methods, where $q$ is a stepsize parameter $1/2 < q < 1$. Despite the generality of our results, there are still open questions that remain. In particular, it would be interesting to know if it is possible to show in-expectation convergence rates for $\a$-smooth SVI for $\a > 1/2$. Another attractive direction of further research in generalized smooth SVIs is in the relaxation of $p$-quasi sharpness assumption to Minty ($\mu = 0$) or weak Minty conditions ($\mu < 0$). We also believe that our technique for proving almost sure convergence and in-expectation rates can be used for the analysis of other methods whose stepsizes are random variables, for example, stochastic clipped Popov method or first-order methods with adaptive stepsizes.

\section*{Acknowledgments}
We thank the anonymous reviewers for their valuable comments. This work is supported in part by NSF
grants CIF-2134256, SCH-2205080 and CIF-2007688.

\bibliography{generalized-svi}

\begin{thebibliography}{44}
\providecommand{\natexlab}[1]{#1}
\providecommand{\url}[1]{\texttt{#1}}
\expandafter\ifx\csname urlstyle\endcsname\relax
  \providecommand{\doi}[1]{doi: #1}\else
  \providecommand{\doi}{doi: \begingroup \urlstyle{rm}\Url}\fi

\bibitem[Alacaoglu et~al.(2024)Alacaoglu, Kim, and Wright]{alacaoglurevisiting}
Ahmet Alacaoglu, Donghwan Kim, and Stephen Wright.
\newblock Revisiting inexact fixed-point iterations for min-max problems: Stochasticity and structured nonconvexity.
\newblock In \emph{Forty-first International Conference on Machine Learning}, 2024.

\bibitem[Beznosikov et~al.(2022)Beznosikov, Polyak, Gorbunov, Kovalev, and Gasnikov]{beznosikov2022smooth}
Aleksandr Beznosikov, Boris Polyak, Eduard Gorbunov, Dmitry Kovalev, and Alexander Gasnikov.
\newblock Smooth monotone stochastic variational inequalities and saddle point problems--survey.
\newblock \emph{arXiv preprint arXiv:2208.13592}, 2022.

\bibitem[Beznosikov et~al.(2023)Beznosikov, Gorbunov, Berard, and Loizou]{pmlr-v206-beznosikov23a}
Aleksandr Beznosikov, Eduard Gorbunov, Hugo Berard, and Nicolas Loizou.
\newblock Stochastic gradient descent-ascent: Unified theory and new efficient methods.
\newblock In \emph{Proceedings of The 26th International Conference on Artificial Intelligence and Statistics}, volume 206 of \emph{Proceedings of Machine Learning Research}, pp.\  172--235. PMLR, 2023.

\bibitem[Chen et~al.(2023)Chen, Zhou, Liang, and Lu]{DBLP:conf/icml/00020LL23}
Ziyi Chen, Yi~Zhou, Yingbin Liang, and Zhaosong Lu.
\newblock Generalized-smooth nonconvex optimization is as efficient as smooth nonconvex optimization.
\newblock In \emph{International Conference on Machine Learning}, 2023.

\bibitem[Choudhury et~al.(2023)Choudhury, Gorbunov, and Loizou]{choudhury2023single}
Sayantan Choudhury, Eduard Gorbunov, and Nicolas Loizou.
\newblock Single-call stochastic extragradient methods for structured non-monotone variational inequalities: Improved analysis under weaker conditions.
\newblock \emph{preprint arXiv:2302.14043}, 2023.

\bibitem[Diakonikolas et~al.(2021)Diakonikolas, Daskalakis, and Jordan]{diakonikolas2021efficient}
Jelena Diakonikolas, Constantinos Daskalakis, and Michael~I Jordan.
\newblock Efficient methods for structured nonconvex-nonconcave min-max optimization.
\newblock In \emph{International Conference on Artificial Intelligence and Statistics}, pp.\  2746--2754. PMLR, 2021.

\bibitem[Durrett(2019)]{durrett2019probability}
Rick Durrett.
\newblock \emph{Probability: theory and examples}, volume~49.
\newblock Cambridge university press, 2019.

\bibitem[Facchinei \& Pang(2003)Facchinei and Pang]{facchinei2003finite}
Francisco Facchinei and Jong-Shi Pang.
\newblock \emph{Finite-dimensional variational inequalities and complementarity problems}.
\newblock Springer, 2003.

\bibitem[Gemp \& Mahadevan(2018)Gemp and Mahadevan]{gemp2018global}
Ian Gemp and Sridhar Mahadevan.
\newblock Global convergence to the equilibrium of gans using variational inequalities.
\newblock \emph{arXiv preprint arXiv:1808.01531}, 2018.

\bibitem[Gidel et~al.(2019)Gidel, Berard, Vignoud, Vincent, and Lacoste{-}Julien]{DBLP:conf/iclr/GidelBVVL19}
Gauthier Gidel, Hugo Berard, Ga{\"{e}}tan Vignoud, Pascal Vincent, and Simon Lacoste{-}Julien.
\newblock A variational inequality perspective on generative adversarial networks.
\newblock In \emph{International Conference on Learning Representations}, 2019.

\bibitem[Gorbunov et~al.(2022)Gorbunov, Danilova, Dobre, Dvurechenskii, Gasnikov, and Gidel]{gorbunov2022clipped}
Eduard Gorbunov, Marina Danilova, David Dobre, Pavel Dvurechenskii, Alexander Gasnikov, and Gauthier Gidel.
\newblock Clipped stochastic methods for variational inequalities with heavy-tailed noise.
\newblock \emph{Advances in Neural Information Processing Systems}, 35:\penalty0 31319--31332, 2022.

\bibitem[Hsieh et~al.(2019)Hsieh, Iutzeler, Malick, and Mertikopoulos]{hsieh2019convergence}
Yu-Guan Hsieh, Franck Iutzeler, J{\'e}r{\^o}me Malick, and Panayotis Mertikopoulos.
\newblock On the convergence of single-call stochastic extra-gradient methods.
\newblock \emph{Advances in Neural Information Processing Systems}, 32, 2019.

\bibitem[Hsieh et~al.(2020)Hsieh, Iutzeler, Malick, and Mertikopoulos]{hsieh2020explore}
Yu-Guan Hsieh, Franck Iutzeler, J{\'e}r{\^o}me Malick, and Panayotis Mertikopoulos.
\newblock Explore aggressively, update conservatively: Stochastic extragradient methods with variable stepsize scaling.
\newblock \emph{Advances in Neural Information Processing Systems}, 33:\penalty0 16223--16234, 2020.

\bibitem[Jelassi et~al.(2022)Jelassi, Dobre, Mensch, Li, and Gidel]{jelassi2022dissecting}
Samy Jelassi, David Dobre, Arthur Mensch, Yuanzhi Li, and Gauthier Gidel.
\newblock Dissecting adaptive methods in gans.
\newblock \emph{arXiv preprint arXiv:2210.04319}, 2022.

\bibitem[Juditsky et~al.(2011)Juditsky, Nemirovski, and Tauvel]{juditsky2011solving}
Anatoli Juditsky, Arkadi Nemirovski, and Claire Tauvel.
\newblock Solving variational inequalities with stochastic mirror-prox algorithm.
\newblock \emph{Stochastic Systems}, 1\penalty0 (1):\penalty0 17--58, 2011.

\bibitem[Kannan \& Shanbhag(2019)Kannan and Shanbhag]{kannan2019optimal}
Aswin Kannan and Uday~V Shanbhag.
\newblock Optimal stochastic extragradient schemes for pseudomonotone stochastic variational inequality problems and their variants.
\newblock \emph{Computational Optimization and Applications}, 74\penalty0 (3):\penalty0 779--820, 2019.

\bibitem[Kingma \& Ba(2015)Kingma and Ba]{DBLP:journals/corr/KingmaB14}
Diederik~P. Kingma and Jimmy Ba.
\newblock Adam: {A} method for stochastic optimization.
\newblock In \emph{International Conference on Learning Representations}, 2015.

\bibitem[Koloskova et~al.(2023)Koloskova, Hendrikx, and Stich]{koloskova2023revisiting}
Anastasia Koloskova, Hadrien Hendrikx, and Sebastian~U Stich.
\newblock Revisiting gradient clipping: Stochastic bias and tight convergence guarantees.
\newblock In \emph{International Conference on Machine Learning}, pp.\  17343--17363. PMLR, 2023.

\bibitem[Korpelevich(1976)]{korpelevich1976extragradient}
Galina~M Korpelevich.
\newblock The extragradient method for finding saddle points and other problems.
\newblock \emph{Matecon}, 12:\penalty0 747--756, 1976.

\bibitem[Kotsalis et~al.(2022)Kotsalis, Lan, and Li]{kotsalis2022simple}
Georgios Kotsalis, Guanghui Lan, and Tianjiao Li.
\newblock Simple and optimal methods for stochastic variational inequalities, i: Operator extrapolation.
\newblock \emph{SIAM Journal on Optimization}, 32\penalty0 (3):\penalty0 2041--2073, 2022.

\bibitem[Li et~al.(2023)Li, Jadbabaie, and Rakhlin]{li2023convergence}
Haochuan Li, Ali Jadbabaie, and Alexander Rakhlin.
\newblock Convergence of adam under relaxed assumptions.
\newblock \emph{arXiv preprint arXiv:2304.13972}, 2023.

\bibitem[Li et~al.(2024)Li, Qian, Tian, Rakhlin, and Jadbabaie]{li2024convex}
Haochuan Li, Jian Qian, Yi~Tian, Alexander Rakhlin, and Ali Jadbabaie.
\newblock Convex and non-convex optimization under generalized smoothness.
\newblock \emph{Advances in Neural Information Processing Systems}, 36, 2024.

\bibitem[Lin \& Jordan(2025)Lin and Jordan]{lin2025perseus}
Tianyi Lin and Michael~I Jordan.
\newblock Perseus: A simple and optimal high-order method for variational inequalities.
\newblock \emph{Mathematical Programming}, 209\penalty0 (1):\penalty0 609--650, 2025.

\bibitem[Loizou et~al.(2021)Loizou, Berard, Gidel, Mitliagkas, and Lacoste-Julien]{loizou2021stochastic}
Nicolas Loizou, Hugo Berard, Gauthier Gidel, Ioannis Mitliagkas, and Simon Lacoste-Julien.
\newblock Stochastic gradient descent-ascent and consensus optimization for smooth games: Convergence analysis under expected co-coercivity.
\newblock \emph{Advances in Neural Information Processing Systems}, 34:\penalty0 19095--19108, 2021.

\bibitem[Nemirovski(2004)]{nemirovski2004prox}
Arkadi Nemirovski.
\newblock Prox-method with rate of convergence o (1/t) for variational inequalities with lipschitz continuous monotone operators and smooth convex-concave saddle point problems.
\newblock \emph{SIAM Journal on Optimization}, 15\penalty0 (1):\penalty0 229--251, 2004.

\bibitem[Pethick et~al.(2023)Pethick, Fercoq, Latafat, Patrinos, and Cevher]{DBLP:conf/iclr/PethickFLPC23}
Thomas Pethick, Olivier Fercoq, Puya Latafat, Panagiotis Patrinos, and Volkan Cevher.
\newblock Solving stochastic weak minty variational inequalities without increasing batch size.
\newblock In \emph{The Eleventh International Conference on Learning Representations, {ICLR}}, 2023.

\bibitem[Pfau \& Vinyals(2016)Pfau and Vinyals]{pfau2016connecting}
David Pfau and Oriol Vinyals.
\newblock Connecting generative adversarial networks and actor-critic methods.
\newblock \emph{arXiv preprint arXiv:1610.01945}, 2016.

\bibitem[Polyak(1987)]{polyak1987introduction}
Boris~T Polyak.
\newblock Introduction to optimization. optimization software.
\newblock \emph{Inc., Publications Division, New York}, 1:\penalty0 32, 1987.

\bibitem[Popov(1980)]{popov1980modification}
Leonid~Denisovich Popov.
\newblock A modification of the arrow-hurwicz method for search of saddle points.
\newblock \emph{Mathematical notes of the Academy of Sciences of the USSR}, 28\penalty0 (5):\penalty0 845--848, 1980.

\bibitem[Sokota et~al.(2022)Sokota, D'Orazio, Kolter, Loizou, Lanctot, Mitliagkas, Brown, and Kroer]{sokota2022unified}
Samuel Sokota, Ryan D'Orazio, J~Zico Kolter, Nicolas Loizou, Marc Lanctot, Ioannis Mitliagkas, Noam Brown, and Christian Kroer.
\newblock A unified approach to reinforcement learning, quantal response equilibria, and two-player zero-sum games.
\newblock \emph{arXiv preprint arXiv:2206.05825}, 2022.

\bibitem[Song et~al.(2020)Song, Zhou, Zhou, Jiang, and Ma]{song2020optimistic}
Chaobing Song, Zhengyuan Zhou, Yichao Zhou, Yong Jiang, and Yi~Ma.
\newblock Optimistic dual extrapolation for coherent non-monotone variational inequalities.
\newblock \emph{Advances in Neural Information Processing Systems}, 33:\penalty0 14303--14314, 2020.

\bibitem[Stich(2019)]{stich2019unified}
Sebastian~U Stich.
\newblock Unified optimal analysis of the (stochastic) gradient method.
\newblock \emph{arXiv preprint arXiv:1907.04232}, 2019.

\bibitem[Vankov et~al.(2023)Vankov, Nedi\'c, and Sankar]{vankov2023last}
Daniil Vankov, Angelia Nedi\'c, and Lalitha Sankar.
\newblock Last iterate convergence of popov method for non-monotone stochastic variational inequalities.
\newblock \emph{arXiv preprint arXiv:2310.16910}, 2023.

\bibitem[Vankov et~al.(2024)Vankov, Nedi\'c, and Sankar]{vankov2024icml}
Daniil Vankov, Angelia Nedi\'c, and Lalitha Sankar.
\newblock Generalized smooth variational inequalities: Methods with adaptive stepsizes.
\newblock In \emph{Proceedings of the 41st International Conference on Machine Learning, {PMLR}}, volume 235, pp.\  49137--49170, 2024.

\bibitem[Wang et~al.(2023)Wang, Zhang, Ma, and Chen]{wang2023convergence}
Bohan Wang, Huishuai Zhang, Zhiming Ma, and Wei Chen.
\newblock Convergence of adagrad for non-convex objectives: Simple proofs and relaxed assumptions.
\newblock In \emph{The Thirty Sixth Annual Conference on Learning Theory}, pp.\  161--190. PMLR, 2023.

\bibitem[Wei et~al.(2021)Wei, Lee, Zhang, and Luo]{DBLP:conf/iclr/WeiLZL21}
Chen{-}Yu Wei, Chung{-}Wei Lee, Mengxiao Zhang, and Haipeng Luo.
\newblock Linear last-iterate convergence in constrained saddle-point optimization.
\newblock In \emph{International Conference on Learning Representations}, 2021.

\bibitem[Xian et~al.(2024)Xian, Chen, and Huang]{xiandelving}
Wenhan Xian, Ziyi Chen, and Heng Huang.
\newblock Delving into the convergence of generalized smooth minimax optimization.
\newblock In \emph{Forty-first International Conference on Machine Learning}, 2024.

\bibitem[Xiao \& Shanbhag(2025)Xiao and Shanbhag]{xiao2025computing}
Zhuoyu Xiao and Uday~V Shanbhag.
\newblock Computing equilibria in stochastic nonconvex and non-monotone games via gradient-response schemes.
\newblock \emph{arXiv preprint arXiv:2504.14056}, 2025.

\bibitem[Yousefian et~al.(2014)Yousefian, Nedi\'c, and Shanbhag]{DBLP:conf/cdc/YousefianNS14}
Farzad Yousefian, Angelia Nedi\'c, and Uday~V. Shanbhag.
\newblock Optimal robust smoothing extragradient algorithms for stochastic variational inequality problems.
\newblock In \emph{53rd {IEEE} Conference on Decision and Control (CDC), Los Angeles, CA, USA, December 15-17, 2014}, pp.\  5831--5836. {IEEE}, 2014.

\bibitem[Yousefian et~al.(2017)Yousefian, Nedi\'c, and Shanbhag]{YousefianNS2017}
Farzad Yousefian, Angelia Nedi\'c, and Uday~V. Shanbhag.
\newblock On smoothing, regularization, and averaging in stochastic approximation methods for stochastic variational inequality problems.
\newblock \emph{Mathematical Programming}, 165:\penalty0 391--431, 2017.

\bibitem[Zarifis et~al.(2024)Zarifis, Wang, Diakonikolas, and Diakonikolas]{zarifis2024robustly}
Nikos Zarifis, Puqian Wang, Ilias Diakonikolas, and Jelena Diakonikolas.
\newblock Robustly learning single-index models via alignment sharpness.
\newblock \emph{arXiv preprint arXiv:2402.17756}, 2024.

\bibitem[Zhang et~al.(2020)Zhang, He, Sra, and Jadbabaie]{DBLP:conf/iclr/ZhangHSJ20}
Jingzhao Zhang, Tianxing He, Suvrit Sra, and Ali Jadbabaie.
\newblock Why gradient clipping accelerates training: {A} theoretical justification for adaptivity.
\newblock In \emph{International Conference on Learning Representations}, 2020.

\bibitem[Zhang et~al.(2024)Zhang, Zhou, and Zou]{zhang2024convergence}
Qi~Zhang, Yi~Zhou, and Shaofeng Zou.
\newblock Convergence guarantees for rmsprop and adam in generalized-smooth non-convex optimization with affine noise variance.
\newblock \emph{arXiv preprint arXiv:2404.01436}, 2024.

\bibitem[Zhou \& So(2017)Zhou and So]{zhou2017unified}
Zirui Zhou and Anthony Man-Cho So.
\newblock A unified approach to error bounds for structured convex optimization problems.
\newblock \emph{Mathematical Programming}, 165:\penalty0 689--728, 2017.

\end{thebibliography}
\bibliographystyle{tmlr}

\appendix

\section{Technical Lemmas}
In our analysis, we use the properties of the projection operator $P_U(\cdot)$ given in the following lemma.
\begin{lemma}\label{lem-proj} (Theorem 1.5.5 and Lemma 12.1.13 in~\cite{facchinei2003finite}) Given a nonempty convex closed set $U\subset\mathbb{R}^\bd,$ the projection operator $P_U(\cdot)$ has the following properties:
\begin{equation}
    \label{eq-proj1}
    \langle v - P_{U}(v), u - P_{U}(v) \rangle \leq 0  \quad \hbox{for all } u \in U, v \in \mathbb{R}^\bd,
\end{equation}
\begin{equation}
    \label{eq-proj3} 
    \|u - P_{U}(v)\|^2 \leq \|u - v\|^2 - \|v - P_{U}(v)\|^2 \quad \hbox{for all } u \in U, v \in \mathbb{R}^\bd,
\end{equation}
\begin{equation}
    \label{eq-proj5}
    \|P_U(u)-P_U(v)\|\le \|u-v\| \quad \hbox{for all } u , v \in \mathbb{R}^\bd.
  \end{equation}  
\end{lemma}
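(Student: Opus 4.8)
The plan is to derive the three properties in sequence, since \eqref{eq-proj3} and \eqref{eq-proj5} both follow quickly once \eqref{eq-proj1} is available. Before anything else I would record that, because $U\subseteq\mathbb{R}^\bd$ is nonempty, closed, and convex, the projection $P_U(v)=\argmin_{w\in U}\|w-v\|^2$ is well defined for every $v\in\mathbb{R}^\bd$: existence follows because $w\mapsto\|w-v\|^2$ is continuous and coercive on the closed set $U$ (Weierstrass), and uniqueness follows from strict convexity of $w\mapsto\|w-v\|^2$. This is the only place where a genuinely analytic argument enters; everything afterwards is algebra with inner products.

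For \eqref{eq-proj1}, fix $v\in\mathbb{R}^\bd$, $u\in U$, and write $p=P_U(v)$. By convexity, $p+t(u-p)\in U$ for all $t\in[0,1]$, so minimality of $p$ gives $\|p+t(u-p)-v\|^2\ge\|p-v\|^2$. Expanding the left-hand side yields $2t\langle p-v,\,u-p\rangle+t^2\|u-p\|^2\ge0$; dividing by $t>0$ and letting $t\downarrow0$ gives $\langle p-v,\,u-p\rangle\ge0$, i.e. $\langle v-p,\,u-p\rangle\le0$, which is \eqref{eq-proj1}.

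For \eqref{eq-proj3}, I would expand $\|u-v\|^2=\|u-p\|^2+2\langle u-p,\,p-v\rangle+\|p-v\|^2$ with $p=P_U(v)$ and $u\in U$; the cross term is nonnegative by \eqref{eq-proj1}, so $\|u-p\|^2\le\|u-v\|^2-\|p-v\|^2$. For \eqref{eq-proj5}, I would apply \eqref{eq-proj1} twice: at $v$ with test point $P_U(u)\in U$, giving $\langle v-P_U(v),\,P_U(u)-P_U(v)\rangle\le0$, and at $u$ with test point $P_U(v)\in U$, giving $\langle u-P_U(u),\,P_U(v)-P_U(u)\rangle\le0$. Adding these two inequalities and rearranging yields $\|P_U(u)-P_U(v)\|^2\le\langle u-v,\,P_U(u)-P_U(v)\rangle$, and Cauchy--Schwarz then gives $\|P_U(u)-P_U(v)\|\le\|u-v\|$.

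Since this is a classical statement (cited as Theorem 1.5.5 and Lemma 12.1.13 in Facchinei--Pang), there is no substantive obstacle; the only point that deserves care is the well-definedness of $P_U$, which must be in place before the variational inequality \eqref{eq-proj1} is even meaningful.
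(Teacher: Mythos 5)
Your proof is correct: the variational characterization argument for \eqref{eq-proj1} (perturb $P_U(v)$ along the segment toward $u\in U$ and let $t\downarrow 0$), the expansion giving \eqref{eq-proj3}, and the two-fold application of \eqref{eq-proj1} plus Cauchy--Schwarz giving \eqref{eq-proj5} are all sound, and the preliminary remark on existence/uniqueness of $P_U$ is the right thing to settle first. The paper itself offers no proof of this lemma---it simply cites Theorem 1.5.5 and Lemma 12.1.13 of Facchinei--Pang---and your argument is precisely the standard derivation found there, so there is nothing to reconcile.
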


In the forthcoming analysis, we use Lemma~11 \cite{polyak1987introduction}, which is stated below.
\begin{lemma} \label{lemma-polyak11} [Lemma~11 \cite{polyak1987introduction}]
Let $\{v_k\}, \{z_k\}, \{a_k\},$ and  $\{b_k\}$ be nonnegative random scalar sequences such that almost surely for all $k\ge0$,
\begin{equation}
\begin{aligned}
\label{eq-polyak-0}
\mathbb{E}[v_{k+1}\mid {\cal F}_k] \leq &(1 + a_k)v_k -z_k + b_k,
\end{aligned}
\end{equation}
where 
${\cal F}_k = \{v_0, \ldots, v_k, z_0, \ldots, z_k, a_0, \ldots, a_k,b_0, \ldots,b_k\}$, and
\emph{a.s.} $\sum_{k=0}^{\infty} a_k < \infty$, $\sum_{k=0}^{\infty} b_k < \infty$. Then, almost surely, $\lim_{k\to\infty} v_k =v $  for some nonnegative random variable $v$ and $\sum_{k=0}^{\infty} z_k < \infty$.
\end{lemma}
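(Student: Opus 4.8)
This is the classical Robbins--Siegmund almost-supermartingale lemma, so the plan is to reduce it to Doob's convergence theorem for supermartingales that are bounded from below. First I would neutralize the multiplicative perturbation: since $\sum_k a_k<\infty$ a.s., the partial products $\alpha_k:=\prod_{i=0}^{k-1}(1+a_i)$ (with $\alpha_0=1$) are nondecreasing and converge a.s. to a finite limit $\alpha_\infty\in[1,\infty)$, and each $\alpha_k$ is $\mathcal{F}_{k-1}$-measurable. Dividing the hypothesis by $\alpha_{k+1}=(1+a_k)\alpha_k$ and pulling $\alpha_{k+1}$ out of the conditional expectation turns the leading coefficient into $1$:
\[
\mathbb{E}\!\left[\frac{v_{k+1}}{\alpha_{k+1}}\,\Big|\,\mathcal{F}_k\right]\;\le\;\frac{v_k}{\alpha_k}-\frac{z_k}{\alpha_{k+1}}+\frac{b_k}{\alpha_{k+1}}.
\]

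Next I would absorb the additive perturbation by passing to the process
\[
U_k\;:=\;\frac{v_k}{\alpha_k}-\sum_{i=0}^{k-1}\frac{b_i}{\alpha_{i+1}},
\]
which is $\mathcal{F}_k$-adapted (the subtracted sum is $\mathcal{F}_{k-1}$-measurable). A one-line computation using the displayed inequality gives $\mathbb{E}[U_{k+1}\mid\mathcal{F}_k]\le U_k-z_k/\alpha_{k+1}\le U_k$, so $(U_k)$ is a supermartingale, and it is bounded below by $-\sum_{i=0}^{\infty}b_i/\alpha_{i+1}\ge-\sum_{i=0}^{\infty}b_i$, which is a.s. finite. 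Granting the integrability needed to invoke it (discussed below), the supermartingale convergence theorem gives $U_k\to U_\infty$ a.s. with $U_\infty$ finite. Because $\alpha_{i+1}\ge1$ and $\sum_i b_i<\infty$ a.s., the tails $\sum_{i=0}^{k-1}b_i/\alpha_{i+1}$ converge a.s.; hence $v_k/\alpha_k$ converges a.s., and multiplying by $\alpha_k\to\alpha_\infty$ shows $v_k$ converges a.s. to a nonnegative limit $v$. Telescoping $\mathbb{E}[U_{k+1}]\le\mathbb{E}[U_k]-\mathbb{E}[z_k/\alpha_{k+1}]$ and using the lower bound on $U$ gives $\sum_k\mathbb{E}[z_k/\alpha_{k+1}]<\infty$, hence $\sum_k z_k/\alpha_{k+1}<\infty$ a.s. by monotone convergence, and since $\alpha_{k+1}\le\alpha_\infty<\infty$ a.s. this forces $\sum_k z_k<\infty$ a.s.

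The only genuinely delicate point — the step I expect to be the main obstacle — is integrability: $(U_k)$ must have finite expectation to invoke the convergence theorem and to telescope, equivalently $\mathbb{E}[v_k]<\infty$ for each $k$ and $\mathbb{E}\big[\sum_i b_i\big]<\infty$, neither of which is automatic since $\sum_k a_k$ and $\sum_k b_k$ are assumed finite only almost surely. In all our uses of this lemma $v_0$ is deterministic and the relevant $a_k,b_k$ are bounded (indeed $a_k\equiv 0$ and $b_k$ is a deterministic multiple of $\beta_k^2$), so $\mathbb{E}[v_k]<\infty$ follows by a trivial induction on $\mathbb{E}[v_{k+1}]\le(1+a_k)\mathbb{E}[v_k]+\mathbb{E}[b_k]$ and the argument above applies verbatim. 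For the general statement one first localizes by the stopping times $\tau_M:=\inf\{k:\sum_{i=0}^{k-1}(a_i+b_i)>M\}$ (if needed, also stopping at the first index where $a_k+b_k>1$, which costs nothing since a.s. $a_k+b_k\to0$), carries out the argument for the stopped sequences $v_{k\wedge\tau_M}$ — for which the rescaling factor and the $b$-sum become uniformly bounded, restoring integrability — and then lets $M\to\infty$, using that $\{\sum_k a_k<\infty,\ \sum_k b_k<\infty\}=\bigcup_M\{\tau_M=\infty\}$ has probability one by hypothesis. Apart from this stopping-time bookkeeping, the proof is exactly the rescale-and-shift reduction described above.
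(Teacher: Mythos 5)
The paper does not prove this lemma at all: it is imported verbatim as Lemma~11 of \cite{polyak1987introduction} (the Robbins--Siegmund almost-supermartingale theorem), so there is no in-paper argument to compare against. Your sketch is essentially the standard proof of that cited result and is correct as outlined: the rescaling by $\alpha_k=\prod_{i<k}(1+a_i)$ (each $\alpha_{k+1}$ being $\mathcal{F}_k$-measurable, so it can be pulled out of the conditional expectation), the shift by $\sum_{i<k} b_i/\alpha_{i+1}$ to get a supermartingale bounded below, Doob convergence, and the telescoping plus monotone convergence to get $\sum_k z_k/\alpha_{k+1}<\infty$ a.s., followed by $\alpha_{k+1}\le\alpha_\infty<\infty$ to recover $\sum_k z_k<\infty$; you also correctly identify integrability as the genuine issue and resolve it with the stopping times $\tau_M$ (with the extra stop when $a_k+b_k>1$ to control the overshoot). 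The one residual loose end in full generality is that your localization bounds the $a$- and $b$-sums but not $\mathbb{E}[v_0]$: to start the induction giving integrability of the stopped process one should additionally restrict to the $\mathcal{F}_0$-events $\{v_0\le N\}$ and let $N\to\infty$ (or assume $v_0$ integrable). As you note, in every application in this paper $v_0$ is deterministic and $a_k\equiv 0$, $b_k$ is a deterministic multiple of $\beta_k^2$, so the simple induction on $\mathbb{E}[v_k]$ suffices and the argument applies directly.
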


As a direct consequence of Lemma~\ref{lemma-polyak11}, when
the sequences $\{v_k\}, \{z_k\}, \{a_k\}, \{b_k\}$ are deterministic, we obtain the following result.
\begin{lemma} \label{lemma-polyak11-det} 
Let $\{\bar v_k\}, \{\bar z_k\}, \{\bar a_k\}, \{\bar b_k\}$ be nonnegative scalar sequences such that for all $k\ge0$,
\begin{equation}
\begin{aligned}
\label{eq-polyak-1}
\bar v_{k+1}\leq &(1 + \bar a_k)\bar v_k -\bar z_k + \bar b_k,
\end{aligned}
\end{equation}
where $\sum_{k=0}^{\infty} \bar a_k < \infty$ and $\sum_{k=0}^{\infty} \bar b_k < \infty$. Then, 
$\lim_{k\to\infty} \bar v_k = \bar v$ 
for some scalar $\bar v\ge0$ and $\sum_{k=0}^{\infty} \bar z_k < \infty$.
\end{lemma}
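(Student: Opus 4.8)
The plan is to deduce this deterministic statement directly from the stochastic Robbins--Siegmund lemma, Lemma~\ref{lemma-polyak11}. First I would observe that any deterministic sequence is in particular a sequence of (constant) random variables on any probability space, and that for such variables the conditional expectation is the value itself: $\mathbb{E}[\bar v_{k+1}\mid\mathcal{F}_k]=\bar v_{k+1}$ for any filtration $\{\mathcal{F}_k\}$, e.g.\ the trivial one. Hence inequality~\eqref{eq-polyak-1} is exactly of the form~\eqref{eq-polyak-0} with $v_k=\bar v_k$, $z_k=\bar z_k$, $a_k=\bar a_k$, $b_k=\bar b_k$, and the hypotheses $\sum_k\bar a_k<\infty$, $\sum_k\bar b_k<\infty$ hold surely, hence almost surely. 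Lemma~\ref{lemma-polyak11} then yields that $\bar v_k$ converges almost surely to a nonnegative random variable and $\sum_k\bar z_k<\infty$ almost surely; since every object involved is deterministic, ``almost surely'' is vacuous, and we obtain $\lim_{k\to\infty}\bar v_k=\bar v$ for a scalar $\bar v\ge0$ together with $\sum_{k=0}^\infty\bar z_k<\infty$.

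For a self-contained argument one can instead proceed directly. Set $P_k=\prod_{j=0}^{k-1}(1+\bar a_j)$ with $P_0=1$; since $\bar a_j\ge0$ and $\sum_j\bar a_j<\infty$, the sequence $\{P_k\}$ is nondecreasing and converges to a finite limit $P_\infty\in[1,\infty)$, so $1\le P_k\le P_\infty$ for all $k$. Dividing~\eqref{eq-polyak-1} by $P_{k+1}=(1+\bar a_k)P_k$ and writing $w_k=\bar v_k/P_k$, $\tilde z_k=\bar z_k/P_{k+1}$, $\tilde b_k=\bar b_k/P_{k+1}$ gives $w_{k+1}\le w_k-\tilde z_k+\tilde b_k$, and $P_{k+1}\ge1$ ensures $\sum_k\tilde b_k\le\sum_k\bar b_k<\infty$. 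Introducing $u_k=w_k+\sum_{j\ge k}\tilde b_j$, the tail sums being finite and vanishing, one checks $u_{k+1}\le u_k-\tilde z_k\le u_k$, so $\{u_k\}$ is nonnegative and nonincreasing, hence convergent; since $\sum_{j\ge k}\tilde b_j\to0$, also $w_k$ converges, and therefore $\bar v_k=P_k w_k\to P_\infty\lim_k w_k=:\bar v\ge0$. Telescoping $u_{k+1}\le u_k-\tilde z_k$ gives $\sum_k\tilde z_k\le u_0<\infty$, and $\bar z_k=P_{k+1}\tilde z_k\le P_\infty\tilde z_k$ yields $\sum_k\bar z_k<\infty$.

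The statement is elementary, so there is no serious obstacle; the only point requiring a little care is that the rescaling by $\{P_k\}$ must not destroy either conclusion --- which is precisely why one uses both $\bar a_j\ge0$ (to keep $P_k\ge1$, so that summability of $\tilde b_k$, $\tilde z_k$ is inherited from and passes back to that of $\bar b_k$, $\bar z_k$) and $\sum_j\bar a_j<\infty$ (to keep $P_k$ bounded, so that convergence of $w_k$ transfers to $\bar v_k$). If one prefers the one-line deduction from Lemma~\ref{lemma-polyak11}, the only thing to verify is that a deterministic sequence legitimately instantiates its hypotheses, which it does with the trivial filtration.
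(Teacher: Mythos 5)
Your first paragraph is exactly the paper's route: the paper gives no independent proof of Lemma~\ref{lemma-polyak11-det}, stating it only as a direct consequence of Lemma~\ref{lemma-polyak11} applied to deterministic (hence trivially measurable) sequences, which is precisely your instantiation with the trivial filtration. Your second, self-contained argument goes beyond the paper and is also correct: with $P_k=\prod_{j=0}^{k-1}(1+\bar a_j)$ one indeed has $1\le P_k\le P_\infty\le\exp\big(\sum_j\bar a_j\big)<\infty$, the rescaled recursion $w_{k+1}\le w_k-\tilde z_k+\tilde b_k$ holds, the shifted sequence $u_k=w_k+\sum_{j\ge k}\tilde b_j$ is nonnegative and nonincreasing, and the two-sided comparisons $\tilde b_k\le\bar b_k$ and $\bar z_k\le P_\infty\tilde z_k$ transfer summability in both directions, so both conclusions follow. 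What the elementary version buys is independence from the probabilistic machinery (martingale convergence underlying Robbins--Siegmund), at the cost of a few lines; what the one-line deduction buys is brevity, and your remark that ``almost surely'' is vacuous for deterministic sequences is the only point that needs saying there.
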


\begin{lemma} \label{lemma-chebyshev} 
Let $X$ be a non-negative random variable such that $\mathbb{E}[X^{\rho}]$ is defined for some $\rho \geq 1$, and $\mathbb{E}[X^{\rho}] \not = 0$, then for every $a > 0$ it holds
\begin{equation}
\begin{aligned}
\label{eq-chebyshev}
\mathbb{P}(X > a (\mathbb{E}[X^{\rho}] )^{1/\rho} )\leq \frac{1}{a^{\rho}}.
\end{aligned}
\end{equation}
\end{lemma}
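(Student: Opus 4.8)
\textbf{Proof proposal for Lemma~\ref{lemma-chebyshev}.}
The plan is to reduce the claim to the classical Markov inequality applied to the non-negative random variable $X^{\rho}$. First I would recall Markov's inequality in the form: for any non-negative random variable $Y$ with finite mean and any threshold $t > 0$,
\[
\mathbb{P}(Y > t) \le \frac{\mathbb{E}[Y]}{t}.
\]
I would apply this with $Y = X^{\rho}$, which has a well-defined (finite, by hypothesis) expectation, and with the specific threshold $t = a^{\rho}\,\mathbb{E}[X^{\rho}]$, which is strictly positive precisely because $a > 0$ and $\mathbb{E}[X^{\rho}] \neq 0$ (so the division below is legitimate).

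The key intermediate step is the equivalence of events
\[
\{X > a\,(\mathbb{E}[X^{\rho}])^{1/\rho}\} = \{X^{\rho} > a^{\rho}\,\mathbb{E}[X^{\rho}]\},
\]
which holds because $X \ge 0$ and the map $s \mapsto s^{\rho}$ is strictly increasing on $[0,\infty)$ for $\rho \ge 1$ (indeed for any $\rho > 0$); taking $\rho$-th powers is therefore a bijection between the two events. Combining this with Markov's inequality applied as above gives
\[
\mathbb{P}\big(X > a\,(\mathbb{E}[X^{\rho}])^{1/\rho}\big)
= \mathbb{P}\big(X^{\rho} > a^{\rho}\,\mathbb{E}[X^{\rho}]\big)
\le \frac{\mathbb{E}[X^{\rho}]}{a^{\rho}\,\mathbb{E}[X^{\rho}]}
= \frac{1}{a^{\rho}},
\]
which is exactly the desired bound~\eqref{eq-chebyshev}.

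I do not anticipate any genuine obstacle here: this is a one-line consequence of Markov's inequality, and the only points requiring care are (i) checking that $\mathbb{E}[X^{\rho}]$ is finite and nonzero so that the threshold $a^{\rho}\mathbb{E}[X^{\rho}]$ is a valid positive number and the final fraction is well-defined, and (ii) justifying the event rewriting via monotonicity of $s\mapsto s^\rho$ on the nonnegative reals. If one prefers to avoid even invoking Markov's inequality as a black box, the same estimate follows directly from $\mathbb{E}[X^{\rho}] \ge \mathbb{E}[X^{\rho}\,\1_{\{X^{\rho} > t\}}] \ge t\,\mathbb{P}(X^{\rho} > t)$ with $t = a^{\rho}\mathbb{E}[X^{\rho}]$.
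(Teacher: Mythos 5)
Your proposal is correct and follows essentially the same route as the paper: set $Y = X^{\rho}$, rewrite the event via monotonicity of $s \mapsto s^{\rho}$, and apply Markov's inequality at threshold $a^{\rho}\,\mathbb{E}[X^{\rho}]$. The extra care you take about positivity of the threshold and the event equivalence is fine but not needed beyond what the paper already does.
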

\begin{proof}
    Let $Y = X^{\rho}$. By the conditions of the lemma, the expectation $\mathbb{E}[Y] = \mathbb{E}[X^{\rho}]$ is well defined. Then, by Markov's inequality:
    \begin{align*}
        \mathbb{P}(X > a (\mathbb{E}[X^{\rho}] )^{1/\rho}) &= \mathbb{P}(Y > a^{\rho}\mathbb{E}[X^{\rho}] )\cr
        &\leq \frac{\mathbb{E}[X^{\rho}] }{a^{\rho} \mathbb{E}[X^{\rho}] }.
    \end{align*}
\end{proof}

\begin{lemma}
\label{inequality-hoed-2}
Let $a_1, a_2$ be nonnegative scalar and $p>0$. Then the following inequality holds:
\[\left( a_1 + a_2\right)^p \leq 2^{p-1} (a_1^p + a_2^p).  \]
\end{lemma}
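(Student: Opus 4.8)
The plan is to obtain the bound from the convexity of the power function, after first pinning down the right hypothesis on $p$. Set $\phi(t) = t^{p}$ on $[0,\infty)$. The argument needs $\phi$ to be convex, which holds precisely when $p \ge 1$; I would note up front that this — not merely $p > 0$ — is what makes the claim true, since for $0 < p < 1$ the inequality fails (taking $a_1 = 1$, $a_2 = 0$ it would read $1 \le 2^{p-1} < 1$). So I would read the statement with $p \ge 1$ and prove it in that regime.

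With $p \ge 1$ fixed, the first step is the degenerate case $a_1 = a_2 = 0$, where both sides vanish. Otherwise $a_1 + a_2 > 0$, and I would apply Jensen's inequality to $\phi$ at the points $a_1, a_2$ with equal weights $\tfrac{1}{2}$:
\[
\left(\frac{a_1 + a_2}{2}\right)^{p} \;\le\; \frac{a_1^{p} + a_2^{p}}{2}.
\]
Multiplying through by $2^{p}$ gives $(a_1 + a_2)^{p} \le 2^{p-1}(a_1^{p} + a_2^{p})$, which is the assertion. If one prefers not to invoke Jensen, the same follows by normalizing: with $t = a_1/(a_1 + a_2) \in [0,1]$, dividing by $(a_1 + a_2)^{p}$ reduces the claim to $g(t) := t^{p} + (1-t)^{p} \ge 2^{1-p}$ on $[0,1]$; since $g'(t) = p\big(t^{p-1} - (1-t)^{p-1}\big)$ vanishes only at $t = \tfrac{1}{2}$ and $g''(t) = p(p-1)\big(t^{p-2} + (1-t)^{p-2}\big) \ge 0$ for $p \ge 1$, the minimum of $g$ is $g(\tfrac{1}{2}) = 2^{1-p}$, as needed.

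This is short and self-contained, so I do not expect a genuine technical obstacle; the only point worth care — and the thing I would flag explicitly — is the admissible range of the exponent, as the inequality really does require $p \ge 1$ and should be stated with that hypothesis rather than $p>0$.
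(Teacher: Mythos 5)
Your proof is correct, but it takes a different route from the paper's: the paper proves the lemma via H\"older's inequality, writing $a_1+a_2 = \langle (a_1,a_2),(1,1)\rangle \le (a_1^p+a_2^p)^{1/p}\,2^{(p-1)/p}$ with conjugate exponents $p$ and $p/(p-1)$ and then raising to the power $p$, whereas you use convexity of $t\mapsto t^p$ (Jensen with equal weights), with a normalization argument as a backup. The two arguments are of comparable length and both are standard; yours is arguably more elementary in that it avoids naming H\"older, while the paper's is a one-line specialization of a general norm inequality. Your flag on the exponent range is a genuine and worthwhile catch: the inequality does fail for $0<p<1$ (e.g.\ $a_1=1$, $a_2=0$ gives $1\le 2^{p-1}<1$), and the paper's own H\"older argument silently requires $p\ge 1$ as well, since the conjugate exponent $p/(p-1)$ is meaningless for $p<1$ (and $p=1$ should be handled as the trivial equality case). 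So the lemma's hypothesis should read $p\ge 1$ rather than $p>0$; this does not affect the paper's results, since the lemma is only invoked with $p\ge 2$ in the Korpelevich rate analysis.
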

\begin{proof}
Let $a = (a_1, a_2), b = (1,1)$, then by H\"older inequality:
\begin{align*}
a_1 + a_2 &= \|a b\| \cr
&\leq \|a\|_{p} \|b\|_{p/(p-1)} \cr
&\leq (a_1^p + a_2^p)^{1/p} (1 + 1)^{(p-1)/p}.
\end{align*}
Raising the inequality in the power $p$ we get the desired relation.
\end{proof}

\subsection{Auxiliary Results}
In our analysis we make use of Lemma~3 and Lemma~7 from~\cite{stich2019unified}, as well as the sequences provided in the proofs in~\cite{stich2019unified}. 

\begin{lemma}
    \label{Lemma7-stich}
    Let $\{r_k\}$ and $\{s_k\}$ be nonnegative scalar sequences that satisfy the following relation
    \[r_{k+1}\le (1-a \alpha_k) r_k -b \alpha_k s_k + c \g_k^2\qquad\hbox{for all } k\ge0,\]
    where $a>0$, $b>0$, $c\ge0$, and 
    \[\g_k = \frac{2}{a\left( \frac{2d}{a} +k\right)} \qquad\hbox{for all }k\ge 0,\]
    where $d\ge a$.
    Then, for any given $K\ge0$, the following relation holds:
\[\frac{b}{W_K} \sum^{K}_{k=0} w_k s_k + a r_{K+1} \leq  \frac{8d^2}{a K^2}\,r_0 + \frac{2c}{aK}, \]
where $w_k=2d/a +k$,  $0\le k\le K$, and $W_K=\sum_{k=0}^K w_k$.
\end{lemma}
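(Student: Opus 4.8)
\textbf{Proof proposal for Lemma~\ref{Lemma7-stich}.}
The plan is to follow the standard weighted-averaging argument for sequences with diminishing stepsizes, as in~\cite{stich2019unified}, adapted to the specific stepsize schedule $\g_k = \tfrac{2}{a(2d/a+k)}$. First I would rewrite the one-step inequality after multiplying through by the weight $w_k = 2d/a + k$. Observe that $a\alpha_k w_k = 2$ by the definition of the stepsize, so $1 - a\alpha_k = 1 - 2/w_k = (w_k-2)/w_k$. Since $w_{k-1} = w_k - 1$, a short computation shows $w_k(1 - a\alpha_k) = w_k - 2 = w_{k-1} - 1 \le w_{k-1}$ (using $d \ge a$ to guarantee $w_k \ge 2$ for all $k\ge 0$, so the recursion is well behaved). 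Hence multiplying the recursion $r_{k+1} \le (1-a\alpha_k) r_k - b\alpha_k s_k + c\g_k^2$ by $w_k$ gives
\begin{equation*}
w_k r_{k+1} \le w_{k-1} r_k - b\alpha_k w_k s_k + c\g_k^2 w_k.
\end{equation*}

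Next I would telescope this inequality from $k=0$ to $k=K$. Setting $W_K = \sum_{k=0}^K w_k$ and summing, the left side leaves $w_K r_{K+1}$ and the telescoped terms collapse, yielding
\begin{equation*}
w_K r_{K+1} + b\sum_{k=0}^K \alpha_k w_k s_k \le w_{-1} r_0 + c\sum_{k=0}^K \g_k^2 w_k,
\end{equation*}
where $w_{-1} = 2d/a - 1 \le 2d/a$. Now I would plug in the explicit forms: $\g_k w_k = 2/a$, so $\alpha_k w_k s_k = (2/a) s_k$ — wait, more carefully, $b \alpha_k w_k = 2b/a$ is constant, which is exactly why the weighted average of the $s_k$ appears cleanly; and $\g_k^2 w_k = \g_k \cdot (\g_k w_k) = (2/a)\g_k \le (2/a)\cdot(2/(aK)) $ is not quite right either since we want a sum, so instead $\sum_{k=0}^K \g_k^2 w_k = (2/a)\sum_{k=0}^K \g_k = (2/a)\sum_{k=0}^K \tfrac{2}{a(2d/a+k)} \le (4/a^2)\sum_{k=0}^K \tfrac{1}{2d/a+k}$. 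This harmonic-type sum I would bound by a logarithm — but the target bound has no log, so the right approach is the sharper one used in~\cite{stich2019unified}: bound $\sum_{k=0}^K \g_k^2 w_k$ directly by noting $\g_k^2 w_k \le \g_k^2 \cdot (\text{something}) $; in fact the cleanest route is to not expand $\g_k^2 w_k$ into a harmonic sum but to use $c\sum \g_k^2 w_k \le c \cdot \g_0 \sum \g_k \cdot$(bounded factor). Let me instead use the known clean estimate: divide the telescoped inequality by $W_K$, use $W_K \ge \sum_{k=0}^K k \ge K^2/2$ to handle the $r_0$ term (giving $w_{-1} r_0 / W_K \le (2d/a)r_0/(K^2/2) = 4d r_0/(aK^2)$, and after multiplying the whole thing by $a$ one gets the $8d^2/(aK^2)\,r_0$ term once the extra factor of $a w_K^{-1}$ versus the $a r_{K+1}$ normalization is tracked), and bound the noise term using $\sum_{k=0}^K \g_k^2 w_k \le $ (the standard bound $\le 2(K+1)/a^2$ fails; the correct one from Stich is $\le$ a quantity of order $K/a^2$), so that dividing by $W_K \sim K^2/2$ and multiplying by $c$ yields the $2c/(aK)$ term.

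The main obstacle — and the step I would be most careful about — is getting the constants exactly right in the telescoping, specifically reconciling the weights $w_{-1}$ versus $w_K$ on the two sides and the normalization that produces $a r_{K+1}$ rather than $w_K r_{K+1}$ on the left of the final display. Since $w_K = 2d/a + K$ and we want the clean statement $a r_{K+1} \le \dots$, I would divide the telescoped inequality by $w_K$ and then note $a/w_K = a/(2d/a + K) \le a/K$ (absorbing constants) and $w_{-1}/w_K \cdot (1/1)$ needs the extra factor; tracking these and using $d \ge a$ repeatedly to turn $d$'s into the $d^2$ in the leading term is the delicate bookkeeping. The second genuinely substantive point is the bound on $\sum_{k=0}^K \g_k^2 w_k$: rather than a crude harmonic bound, I would cite the sequence estimates established in the proofs of Lemma~3 and Lemma~7 of~\cite{stich2019unified}, which give precisely the $O(K/a^2)$ control needed so that the final noise term is $2c/(aK)$ with no logarithmic factor. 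Everything else is routine algebra with the explicit stepsize.
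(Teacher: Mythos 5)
Your central step---multiplying the one-step recursion by the \emph{linear} weight $w_k$---is the wrong weighting, and the argument cannot be completed along that route. Since $a\gamma_k w_k=2$, the coefficient of $s_k$ after this multiplication is $b\gamma_k w_k=2b/a$, a \emph{constant}; contrary to what you assert, this produces the unweighted sum $\sum_k s_k$, not the weighted sum $\sum_k w_k s_k$ that the lemma controls. The same weighting also caps the achievable rates: telescoping gives $w_K r_{K+1}\le w_{-1}r_0+\cdots$, i.e.\ a bias decay of order $w_{-1}/w_K=O(1/K)$ for $r_{K+1}$, while the lemma claims $O(1/K^2)$; and the noise term $c\sum_k\gamma_k^2 w_k=(2c/a)\sum_k\gamma_k$ is genuinely of order $(c/a^2)\log K$ for this stepsize---the logarithm you ran into is intrinsic to the linear weighting, not an artifact of a crude bound. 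No estimate ``of order $K/a^2$'' rescues it: normalizing by $w_K\approx K$ still leaves a $\log K/K$ noise term and only $1/K$ bias decay, while normalizing by $W_K\approx K^2/2$ shrinks the coefficient of $r_{K+1}$ to $O(1/K)$, so the term $a\,r_{K+1}$ cannot be kept on the left. The two difficulties you flag at the end (``reconciling the weights'' and the bound on $\sum_k\gamma_k^2 w_k$) are therefore not bookkeeping; they are symptoms that the decomposition is wrong, and deferring them to a citation of \cite{stich2019unified} leaves a real gap. (The paper itself gives no proof---it invokes the corresponding lemma of \cite{stich2019unified}---so the comparison here is with that standard argument.)

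The missing idea is to multiply the recursion by $w_k/\gamma_k=\tfrac{a}{2}\left(\tfrac{2d}{a}+k\right)^2$, i.e.\ \emph{quadratic} weights. Writing $\kappa=2d/a\ge 2$, one has $\tfrac{w_k}{\gamma_k}(1-a\gamma_k)=\tfrac{a}{2}(\kappa+k)(\kappa+k-2)\le\tfrac{a}{2}(\kappa+k-1)^2=\tfrac{w_{k-1}}{\gamma_{k-1}}$, the $s_k$-coefficient becomes exactly $b\,w_k$ (yielding the weighted sum), and the per-step noise contribution becomes $c\gamma_k^2\cdot\tfrac{w_k}{\gamma_k}=c\gamma_k w_k=\tfrac{2c}{a}$, which is constant---hence linear in $K$ and log-free. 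Telescoping gives
\begin{equation*}
\frac{a(\kappa+K)^2}{2}\,r_{K+1}+b\sum_{k=0}^{K}w_k s_k\;\le\;\frac{a\kappa^2}{2}\,r_0+\frac{2c(K+1)}{a},
\end{equation*}
and dividing by $W_K=\sum_{k=0}^K w_k\ge K(K+1)/2$, using $\tfrac{a\kappa^2}{2}=\tfrac{2d^2}{a}$ and that the resulting coefficient of $r_{K+1}$ is at least $a/2$ for $\kappa\ge2$, gives the claimed bound up to absolute constants. That quadratic-weight step simultaneously fixes all three problems in your proposal (weighted average, $1/K^2$ bias, no logarithm), and without it the proof does not go through.
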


\begin{lemma} \label{lemma-kq-series-rates}
For $1 > q \geq 1/2$ and $K \geq 1$, we have
\begin{align}
\sum^{K}_{t = 0} \frac{1}{(t+ 1)^q} &\geq \frac{1}{1 - q} ((K+1)^{1 - q} - 2^{1 - q}). 
\end{align}
For $q =1/2$ and $K \geq 1$,
\begin{align}
\sum^{K}_{t = 0} \frac{1}{(t+ 1)^{2q}} &\leq \log (K+1).
\end{align}
For $q > 1/2$  and $K\ge1$,
\begin{align}
\sum^{K}_{t = 0} \frac{1}{(t+ 1)^{2q}} &\leq  \frac{1}{2q -1}.
\end{align}
\end{lemma}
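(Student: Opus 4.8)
The plan is to establish all three inequalities by the standard device of sandwiching the partial sums between integrals of the monotone functions $x\mapsto x^{-q}$ and $x\mapsto x^{-2q}$ on $(0,\infty)$, i.e.\ by comparing left- and right-endpoint Riemann sums with the corresponding definite integral. The antiderivatives are $\int x^{-q}\,dx=\frac{x^{1-q}}{1-q}$ when $q\neq 1$ and $\int x^{-1}\,dx=\log x$, and the three cases of the lemma correspond exactly to: (a) a \emph{lower} bound with exponent $q\in[1/2,1)$; (b) an \emph{upper} bound with exponent $2q=1$ (a harmonic-type sum, producing a logarithm); and (c) an \emph{upper} bound with exponent $2q>1$ (a convergent sum, producing a constant).

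For the first inequality I would reindex $\sum_{t=0}^{K}(t+1)^{-q}=\sum_{s=1}^{K+1}s^{-q}$ and use that $x^{-q}$ is decreasing, so $s^{-q}\ge\int_{s}^{s+1}x^{-q}\,dx$ for every $s\ge 1$. Discarding the $s=1$ term and summing $s=2,\dots,K+1$ gives $\sum_{s=2}^{K+1}s^{-q}\ge\int_{2}^{K+2}x^{-q}\,dx\ge\int_{2}^{K+1}x^{-q}\,dx=\frac{(K+1)^{1-q}-2^{1-q}}{1-q}$, where shrinking the upper limit from $K+2$ to $K+1$ is harmless because $x^{1-q}$ is increasing and $1-q>0$; since the full sum dominates $\sum_{s=2}^{K+1}s^{-q}$, the stated bound follows. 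For the second and third inequalities the exponent is $2q\ge 1$, and I would instead use the opposite-direction estimate $(t+1)^{-2q}\le\int_{t}^{t+1}x^{-2q}\,dx$, valid for $t\ge 1$, so that $\sum_{t=1}^{K}(t+1)^{-2q}\le\int_{1}^{K+1}x^{-2q}\,dx$ (with the lead index $t=0$, whose integral $\int_{0}^{1}x^{-2q}\,dx$ diverges, handled separately since that term equals $1$). Evaluating: for $q=1/2$ the integral is $\log(K+1)$, and for $q>1/2$ it equals $\frac{1-(K+1)^{1-2q}}{2q-1}\le\frac{1}{2q-1}$, which yields the claimed logarithmic and constant bounds respectively.

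I do not expect any genuine obstacle here: the lemma is an elementary consequence of integral comparison. The only point that requires a bit of care — and the single place an argument of this type usually slips — is the endpoint bookkeeping: choosing the correct direction of the monotonicity estimate depending on whether one wants an upper or a lower bound, isolating the index $t=0$ in parts (b) and (c) where the naive integral comparison would involve a divergent integral, and keeping the summation ranges aligned with the limits of integration after reindexing $s=t+1$.
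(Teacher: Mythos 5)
Your treatment of the first inequality is correct and is essentially the paper's own argument (bound each term of the decreasing summand below by the integral over the adjacent unit interval, then evaluate); nothing to flag there.

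For the second and third inequalities there is a genuine gap, and it sits precisely in your final claim. Your estimates themselves are right: from $(t+1)^{-2q}\le\int_t^{t+1}x^{-2q}\,dx$ for $t\ge1$ you get $\sum_{t=1}^K (t+1)^{-2q}\le\int_1^{K+1}x^{-2q}\,dx$, which equals $\log(K+1)$ when $2q=1$ and is at most $\tfrac{1}{2q-1}$ when $2q>1$. But the lemma's sums start at $t=0$, and the $t=0$ term you set aside equals $1$ and never disappears: what your argument actually proves is $\sum_{t=0}^K (t+1)^{-1}\le 1+\log(K+1)$ and $\sum_{t=0}^K (t+1)^{-2q}\le 1+\tfrac{1}{2q-1}$, which are strictly weaker than the stated bounds, so ``which yields the claimed logarithmic and constant bounds'' does not follow. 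Moreover, no argument can close this gap, because the stated bounds are false: for $K=1$ and $q=1/2$ the left side is $1+\tfrac12=\tfrac32>\log 2$, and for $K=1$ and $q=1$ it is $1+\tfrac14>1=\tfrac{1}{2q-1}$; in general $\sum_{s=1}^{\infty}s^{-2q}=\zeta(2q)>\tfrac{1}{2q-1}$ for every $q>1/2$, so the third bound fails for all large $K$. The paper's own proof reaches the stated bounds only through the comparison $\sum_{t=0}^K f(t)\le\int_0^K f(s)\,ds$ for the decreasing integrand $f(s)=(s+1)^{-2q}$, which is the wrong direction (for decreasing $f$ one only has $\sum_{t=0}^K f(t)\le f(0)+\int_0^K f(s)\,ds$). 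The correct statements, and the ones your bookkeeping delivers, are $1+\log(K+1)$ and $\tfrac{2q}{2q-1}$ (equivalently, the stated bounds hold if the sums start at $t=1$); with that correction the lemma's use in Theorems~\ref{thm-proj-rates} and~\ref{thm-korpelevich-rates} goes through with slightly larger absolute constants and unchanged rates.
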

\begin{proof}
 Let $1>q \geq 1/2$ and $K \geq 1$. Then, it holds
\begin{align}
    \sum^{K}_{t = 0} \frac{1}{(t + 1)^q} \geq \int_{s=1}^{K} \frac{d s}{(s + 1)^q} = \frac{1}{1-q} ((K+1)^{1 - q} - 2^{1-q}).
\end{align}
When $q = 1/2$ and $K\ge 1$, then
\begin{align}
    \sum^{K}_{t = 0} \frac{1}{t + 1}  \leq \int_{s=0}^{K} \frac{d s}{s + 1}=  \log (K+1).
\end{align}
When $q > 1/2$ and $K\ge 1$, we have that 
\begin{align}
    \sum^{K}_{t = 0} \frac{1}{(t + 1)^{2q}}  \leq \int_{s=0}^{K} \frac{d s}{(s + 1)^{2q}}=\frac{1}{2q-1}-\frac{1}{(2q-1)(K+1)^{2q-1}}  <\frac{1}{2q -1}.
\end{align}
\end{proof}

\section{Projection Method Analysis}
\label{Appendix-Projection}
\subsection{Proof of Lemma~\ref{lemma-proj-as-bound}}~\label{sec-lem31}
\begin{proof}
Let $k\ge 0$ be arbitrary but fixed. From the definition of $u_{k+1}$ in~\plaineqref{eq-projection-stoch}, we have $\|u_{k+1} - y\|^2 = \|P_{U}(u_k - \gamma_k \Phi(u_k, \xi_k)) - y\|^2$ for all $y \in U$. Using the non-expansiveness property of projection operator~\plaineqref{eq-proj5} we obtain for all $y \in U$ and $k\ge0$,
\begin{align}\label{eq-grad-1}
\|u_{k+1}-y\|^2 
&\le \|u_k - \g_k \Phi(u_k, \xi_k) - y\|^2\cr
&= \|u_k -y\|^2 - 2\g_k \langle \Phi(u_k, \xi_k), u_{k}-y\rangle + \g_k^2 \|\Phi(u_k, \xi_k)\|^2 \cr
&= \|u_k -y\|^2 + \g_k^2 \|\Phi(u_k, \xi_k)\|^2 \cr
&- 2\g_k \langle F(u_k), u_{k}-y\rangle  + 2\g_k \langle e_k, u_{k}-y\rangle,
\end{align}
where $e_k = F(u_k) - \Phi(u_k, \xi_k)$. By the definition of the stepsizes~\plaineqref{eq-stepsizes-projection}, $\gamma_k = \beta_k \min\{1, \frac{1}{\|\Phi(u_k, \xi_k^2)\|}\}$, then the term $\gamma_k^2 \|\Phi(u_k, \xi_k)\|^2$ can be upper bounded as follows
\begin{align}\label{eq-grad-2-0}
\gamma_k^2 \|\Phi(u_k, \xi_k)\|^2  &= \gamma_k^2 \|\Phi(u_k, \xi_k) - F(u_k) + F(u_k) - \Phi(u_k, \xi_k^2) + \Phi(u_k, \xi_k^2)\|^2 \cr
&\leq  \beta_k^2 \min\left\{1, \frac{1}{\|\Phi(u_k, \xi_k^2)\|^2}\right\} \,3(\|e_k\|^2 +  \|e_k^2\|^2 + \|\Phi(u_k, \xi_k^2)\|^2) \cr
&\leq 3 \beta_k^2 \|e_k\|^2 + 3 \beta_k^2 \|e_k^2\|^2 + 3 \beta_k^2,
\end{align}
where $e_k = \Phi(u_k, \xi_k) - F(u_k), e_k^2 = \Phi(u_k, \xi_k^2) - F(u_k)$. Thus,
\begin{align}\label{eq-grad-2}
\|u_{k+1}-y\|^2  &\le \|u_k -y\|^2  - 2\g_k \langle F(u_k), u_{k}-y\rangle \cr
&+ 2\g_k \langle e_k, u_{k}-y\rangle + 3 \beta_k^2 (\|e_k\|^2 + \|e_k^2\|^2 + 1). 
\end{align}

Plugging in $y = u^* \in U^*$, where $u^*$ is an arbitrary solution, and using $p$-quasi sharpness we get:
\begin{align}\label{eq-grad-3}
\|u_{k+1}-u^*\|^2 &\leq \|u_k -u^*\|^2 - 2 \mu \g_k \dist^p(u_k, U^*)  \cr
&+ 2\g_k \langle e_k, u_{k}-u^*\rangle  + 3 \beta_k^2 (\|e_k\|^2 + \|e_k^2\|^2 + 1).
\end{align}

Using stochastic properties of $\xi_k$ and $\xi_k^2$ imposed by Assumption~\ref{asum-samples}, and the conditional independence of $\xi_k$ and $\xi_k^2$, we have:
\[\mathbb{E}[\gamma_k \langle e_k, u_k - u^* \rangle| \mathcal{F}_{k-1}] = \mathbb{E}[\gamma_k \mid \mathcal{F}_{k-1}]  \langle \mathbb{E}[e_k \mid \mathcal{F}_{k-1}], u_k - u^* \rangle  =  0.\]
\[\mathbb{E} [\|e_k\|^2 \mid \mathcal{F}_{k-1}] \leq \sigma^2 , \quad  \mathbb{E}[\|e_k^2\|^2 \mid \mathcal{F}_{k-1}]\leq \sigma^2.\]
Thus, by taking the conditional expectation on  $\mathcal{F}_{k-1} \cup \xi_{k}^2 = \{\xi_0, \xi_0^2, \ldots, \xi_{k-1}, \xi_{k-1}^2, \xi_k^2\}$ in relation~\plaineqref{eq-grad-3} we obtain for all $u^* \in U^*$ and for all $k \geq 0$:

\begin{align}\label{eq-grad-3-1}
\mathbb{E}[\|u_{k+1}-u^*\|^2 \mid \mathcal{F}_{k-1} \cup \xi_{k}^2 ] &\leq \|u_k -u^*\|^2 + 3\beta_k^2(2 \sigma^2 + 1) - 2 \mu \beta_k \min \left\{1, \frac{1}{\|\Phi(u_k, \xi_k^2)\|} \right\} \dist^p(u_k, U^*) .
\end{align}

The equation~\plaineqref{eq-grad-3-1} satisfies the condition of Lemma~\ref{lemma-polyak11} with 
\begin{align} v_{k} = \|u_{k}  - u^*\|^2, \quad  a_k = 0, 
\quad z_k =  2  \mu  \gamma_k \mid \, \dist^p(u_{k}, U^*), \quad  b_k = 3\beta_k^2(2 \sigma^2 + 1) .
\end{align}

By Lemma~\ref{lemma-polyak11}, it follows that the sequence $\{v_k\}$ converges {\it a.s.}\ to a non-negative scalar for any $u^*\in U^*$, and  almost surely we have
\begin{equation}
\label{eq-projection-stoch-RS-lemma-result}
\sum_{k=0}^{\infty} \gamma_k  \, \dist^p(u_k, U^*) < \infty.
\end{equation}
Since the sequence $\{\|u_k - u^*\|^2\}$ 
converges {\it a.s.} for all $u^*\in U^*$, it follows that the sequence 
$\{\|u_k - u^*\|\}$ is bounded {\it a.s.} for all $u^* \in U^*$.
\end{proof}

\subsection{Proof of Theorem~\ref{thm-proj-as-converge}}\label{sec-thm32}

\begin{lemma}
\label{lemma-stepsizes-nonsummable-projection}
    Let $\gamma_k $ are given by \eqref{eq-stepsizes-projection} then the series of $\{ \gamma_k \}$ is non-summable almost surely,
    \begin{equation}
        \sum_{k=0}^{\infty} \gamma_k  = \infty \quad \rm{a.s.}
    \end{equation}
\end{lemma}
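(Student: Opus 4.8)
The plan is to lower bound each stepsize $\gamma_k = \beta_k \min\{1, 1/\|\Phi(u_k,\xi_k^2)\|\}$ by a quantity that is controllable, and then to split the resulting series into a part with a deterministic non-summable lower bound and a convergent martingale tail. First I would note that since $\sum_k \beta_k^2 < \infty$ and (by Lemma~\ref{lemma-proj-as-bound}) the sequence $\{\|u_k - u^*\|\}$ is bounded \emph{a.s.}\ for a fixed solution $u^*$, and since $F$ is $\alpha$-symmetric (Proposition~\ref{prop-a}), the operator norms $\|F(u_k)\|$ are bounded \emph{a.s.}: indeed
\[
\|F(u_k)\| \le \|F(u_k)-F(u^*)\| + \|F(u^*)\| \le \|u_k-u^*\|\big(K_0 + K_1\|F(u^*)\|^\alpha + K_2\|u_k-u^*\|^{\alpha/(1-\alpha)}\big) + \|F(u^*)\|,
\]
and the right-hand side is \emph{a.s.}\ bounded by some random variable $R$ (finite \emph{a.s.}). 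Hence $\min\{1, 1/\|\Phi(u_k,\xi_k^2)\|\} \ge \min\{1, 1/(\|F(u_k)\| + \|e_k^2\|)\}$, where $e_k^2 = \Phi(u_k,\xi_k^2)-F(u_k)$.

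The key step is then to write, using $\min\{1,1/(x+y)\} \ge \tfrac{1}{1+x}\min\{1, \tfrac{1}{1+y}\}$ or a similar elementary bound, $\gamma_k \ge \frac{\beta_k}{1+R}\,\psi_k$ where $\psi_k := \min\{1, 1/(1+\|e_k^2\|)\}$ and $R$ is the \emph{a.s.}\ bound on $\|F(u_k)\|$. Taking the conditional expectation, $\mathbb{E}[\psi_k \mid \mathcal{F}_{k-1}] \ge \delta$ for some deterministic $\delta > 0$ (using Assumption~\ref{asum-samples}, $\mathbb{E}[\|e_k^2\|^2]\le\sigma^2$, so $\|e_k^2\|$ is not too large with at least constant probability — e.g.\ by Chebyshev/Markov, $\mathbb{P}(\|e_k^2\| \le 2\sigma \mid \mathcal{F}_{k-1}) \ge 3/4$, giving $\mathbb{E}[\psi_k\mid\mathcal{F}_{k-1}] \ge \tfrac{3}{4(1+2\sigma)} =: \delta$). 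Now decompose
\[
\sum_{k=0}^{\infty} \beta_k \psi_k = \sum_{k=0}^{\infty} \beta_k\big(\psi_k - \mathbb{E}[\psi_k\mid\mathcal{F}_{k-1}]\big) + \sum_{k=0}^{\infty} \beta_k\,\mathbb{E}[\psi_k\mid\mathcal{F}_{k-1}].
\]
The second sum is at least $\delta \sum_k \beta_k = \infty$ deterministically. The first sum is a martingale (in $K$, the partial sums $S_K$) with increments bounded in $L^2$: since $|\psi_k - \mathbb{E}[\psi_k\mid\mathcal{F}_{k-1}]|\le 1$, the conditional variances sum to at most $\sum_k \beta_k^2 < \infty$, so by the martingale convergence theorem $S_K$ converges \emph{a.s.}\ to a finite limit. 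Therefore $\sum_k \beta_k\psi_k = \infty$ \emph{a.s.}, and since $\gamma_k \ge \frac{\beta_k\psi_k}{1+R}$ with $R<\infty$ \emph{a.s.}, we conclude $\sum_k \gamma_k = \infty$ \emph{a.s.}

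The main obstacle I anticipate is handling the coupling between the \emph{a.s.}\ (but not uniformly) bounded random variable $R$ governing $\|F(u_k)\|$ and the martingale argument: the bound $\gamma_k \ge \beta_k\psi_k/(1+R)$ involves a single random prefactor $1/(1+R)$ that is finite \emph{a.s.}, so once the (global) event $\{R<\infty\}\cap\{\sum_k\beta_k\psi_k=\infty\}$ has probability one, the conclusion follows — but one must be careful that $R$ can be taken as a fixed (though random) quantity not depending on $k$, which is exactly the content of the \emph{a.s.}\ boundedness of $\{\|u_k-u^*\|\}$ from Lemma~\ref{lemma-proj-as-bound}. A secondary technical point is verifying the elementary inequality relating $\min\{1,1/(x+y)\}$ to a product form so that the $\|F(u_k)\|$-part and the noise-part separate cleanly; choosing a slightly loose but clean bound (e.g.\ $\min\{1,\tfrac1{x+y}\}\ge \tfrac{1}{(1+x)(1+y)}$) avoids any delicate casework.
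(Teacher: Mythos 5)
Your proposal is correct and takes essentially the same route as the paper's proof: almost-sure boundedness of $\|F(u_k)\|$ via Lemma~\ref{lemma-proj-as-bound} and the $\alpha$-symmetric characterization of Proposition~\ref{prop-a}, a Markov/Chebyshev-type lower bound by a fixed constant on the conditional expectation of the noise factor, and a split of the series into a convergent $L^2$-bounded martingale plus a part bounded below by $\delta\sum_{k}\beta_k=\infty$. The only small items to tidy are that for $\alpha=1$ you must invoke the exponential characterization in Proposition~\ref{prop-a}(b) rather than your displayed bound with exponent $\alpha/(1-\alpha)$, and that your product bound $\min\{1,\tfrac{1}{x+y}\}\ge\tfrac{1}{(1+x)(1+y)}$, which factors the single random prefactor $1/(1+R)$ out of the whole series, is a clean variant of the paper's indicator-event device $A_k=\{\|e_k^2\|\le 2\sigma\}$.
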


\begin{proof}
We will show that $\sum_{k=0}^{\infty} \beta_k  \min \left\{1, \frac{1}{\|\Phi(u_k, \xi_k^2)\|} \right\} = \infty$ almost surely by the sequences of lower bound on this series. Consider the following event:
\[A_k = \{ \|e_k^2\| \leq 2 \sigma \},\]
where $e_k^2 = \Phi(u_k, \xi_k^2) - F(u_k)$ is a stochastic error from the sample for the clipping stepsize $\gamma_k$. Define $x_k = \min \left\{1, \frac{1}{\|\Phi(u_k, \xi_k^2)\|} \right\}$, then, 
\begin{align}\label{eq-lemma-steps-projection-1}
x_k  = x_k  \mathbb{I}(A_k) + x_k \mathbb{I}(\overline{A}_k) \geq x_k  \mathbb{I}(A_k),
\end{align}
where the random variable $\mathbb{I}(A_k) = $ is the indicator function of the event $A_k$ taking value 1 when the event occurs, and taking value 0 otherwise. 

By the definition of $x_k$, the triangle inequality and definition of $\mathbb{I}(A_k)$, we have
\begin{align}
\label{eq-lemma-steps-projection-2}
x_k \mathbb{I}(A_k)  &=  \min \left\{1, \frac{1}{\|\Phi(u_k, \xi_k^2)\|} \right\} \mathbb{I}(A_k)  \cr
&\geq  \min \left\{1, \frac{1}{\|F(u_k)\|+ \|e_k^2\|}\right\}  \mathbb{I}(A_k) \cr
&\geq  \min \left\{1, \frac{1}{\|F(u_k)\| + 2 \sigma}\right\} \mathbb{I}(A_k) .\cr
\end{align}
By combining the resulting relation with \eqref{eq-lemma-steps-projection-2}, using the definition of $\g_k$, and adding and subtracting $\mathbb{E}[\mathbb{I}(A_k) \mid \mathcal{F}_{k-1}]$,  we have the following lower bound
\begin{align}
\label{eq-lemma-steps-projection-3}
\sum_{k=0}^{\infty} \beta_k x_k  
&\geq \sum_{k=0}^{\infty} \beta_k  \min \left\{1, \frac{1}{\|F(u_k)\| + 2 \sigma}\right\} (\mathbb{I}(A_k) - \mathbb{E}[\mathbb{I}(A_k) \mid \mathcal{F}_{k-1}]) \cr
&+ \sum_{k=0}^{\infty} \beta_k  \min \left\{1, \frac{1}{\|F(u_k)\| + 2 \sigma}\right\} \mathbb{E}[\mathbb{I}(A_k) \mid \mathcal{F}_{k-1}]  .
\end{align}

To bound $p_k := \mathbb{E}[\mathbb{I}(A_k) | \mathcal{F}_{k-1}] = \mathbb{P}(A_k \mid \mathcal{F}_{k-1})$ we  provide an upperbound on $\mathbb{P}(\overline{A}_k \mid \mathcal{F}_{k-1})$ using Markov's inequality and Assumption~\ref{asum-samples}:
\begin{align}\label{eq-lemma-steps-as-3-0}
\mathbb{P}(\overline{A}_k \mid \mathcal{F}_{k-1}) = \mathbb{P}( \|e_k^1\| > 2 \mathbb{E}[\|e_k^1\| \mid \mathcal{F}_{k-1}] \}) \leq \frac{\mathbb{E}[\|e_k^1\| \mid \mathcal{F}_{k-1}]}{2 \mathbb{E}[\|e_k^1\| \mid \mathcal{F}_{k-1}])} = \frac{1}{2}.
\end{align}
This implies $\mathbb{E}[\mathbb{I}(A_k) | \mathcal{F}_{k-1}] \geq \frac{1}{2}$. Define $S_n = \sum_{k=0}^{n} \beta_k (\mathbb{I}(A_k) - \mathbb{E}[\mathbb{I}(A_k) | \mathcal{F}_{k-1}] )$, by construction, $S_n$ is a martingale:
\[ \mathbb{E}[S_{n+1} \mid S_0, \ldots, S_n] = S_n + \mathbb{E}[\beta_{n+1} (\mathbb{I}(A_{n+1}) - \mathbb{E}[\mathbb{I}(A_{n+1}) | \mathcal{F}_{n}] ) \mid S_0, \ldots, S_n] = S_n. \]
We want to show that $\lim_{n \rightarrow \infty} S_n \rightarrow S < \infty$ almost surely. We provide an upper bound for $\mathbb{E}[S_n^2]$:
\begin{align}
\mathbb{E}[S_n^2] = \sum_{k=0}^{n} \beta_k^2 \mathbb{E}[(\mathbb{I}(A_k) - p_k )^2] + 2\sum_{0 \leq k < i \leq n} \beta_k^2 \mathbb{E}[ (\mathbb{I}(A_k) - p_k ) (\mathbb{I}(A_i) - p_i )]
\end{align}
By the law of total expectation, and noting that $ \mathbb{E}[\mathbb{I}(A_k) - p_k \mid \mathcal{F}_{k-1}] = 0 $ for any $k$, we find that
for all $0 \leq k < i \leq n$,
\begin{align}
\mathbb{E}[ (\mathbb{I}(A_k) - p_k ) (\mathbb{I}(A_i) - p_i )] = \mathbb{E}[(\mathbb{I}(A_k) - p_k )  \mathbb{E}[(\mathbb{I}(A_i) - p_i )\mid \mathcal{F}_{i-1}]]  = 0 .
\end{align}
implying that, for all $n \geq 0$
\begin{align}
\mathbb{E}[S_n^2] = \sum_{k=0}^{n} \beta_k^2 \mathbb{E}[(\mathbb{I}(A_k) - p_k )^2] 
\end{align}

Since
$ \mathbb{E}[(\mathbb{I}(A_k) - p_k )^2 \mid \mathcal{F}_{k-1}]  = Var(I(A_k)  \mid \mathcal{F}_{k-1})$ and the random variable $\mathbb{I}(A_k) $ is a Bernoulli given $ \mathcal{F}_{k-1}$ with mean $p_k$, then 
\[ 
\mathbb{E}[(\mathbb{I}(A_k) - p_k)^2 \mid \mathcal{F}_{k-1}] = \rm{Var}(\mathbb{I}(A_k)  \mid \mathcal{F}_{k-1}) \leq \frac{1}{4}
\]
By taking the total expectation we get $\mathbb{E}[(\mathbb{I}(A_k) - p_k)^2 ] \leq \frac{1}{4}$, and combining the previous two relations, we obtain

\[\mathbb{E}[S_n^2] \leq \frac{1}{4} \sum_{k=0}^{n} \beta_k^2 \leq \infty  \quad \text{a.s.}
\]
From Theorem 4.4.6. in \cite{durrett2019probability} it follows that $S_n$ converges to $S < \infty$ almost surely (and in $L_2$).

To further lower bound $x_k \mathbb{I}(A_k)$ we show \emph{a.s.} boundedness of $\|F(u_k)\|$  for all $k \geq 0$, using property of $\a$-symmetric operators. To estimate $\|F(u_k)\|$, we add and subtract $F(v^*)$, where $v^* \in U^*$ is an arbitrary but fixed solution, and get 
\[\|F(u_k)\| = \|F(u_k) - F(v^*) + F(v^*)\| \leq \|F(u_k) - F(v^*)\| + \|F(v^*)\|.\]
Define the following event:
\[A = \{\omega \in \Omega: \; \exists \;  C(\omega) \in \mathbb{R} \text{ s.t.} \|u_k(\omega) - v^*\| < C(\omega) \; \forall \; k \geq 0 \}.\]
Based on Lemma~\ref{lemma-proj-as-bound}, the sequence $\{\|u_k - v^*\|\}$ is bounded {\it a.s.}, and thus $\mathbb{P}(A) = 1$. Let $\omega \in A$, now we can estimate $\|F(u_k(\omega))\|$ using the $\alpha$-symmetric assumption on the operator. \\
\textbf{Case $\alpha \in (0,1)$}.\\
\begin{equation}
\begin{aligned}
\label{eq-lemma-steps-projection-5}
\|F(u_k(\omega)) - F(v^*)\| \leq \|u_k(\omega) - v^*\| (K_0 + K_1 \|F(v^*)\|^{\alpha} + K_2 \|u_k(\omega) - v^* \|^{\a / (1 - \a)}).
\end{aligned}
\end{equation}
Since $\omega \in A$, it follows that 
$\|u_k(\omega) - v^*\|   \leq C(\omega)$ for all $k \geq 0$.
Using this fact and~\plaineqref{eq-lemma-steps-projection-5} we obtain that for all $k \ge 0$,
\begin{align}\label{eq-lemma-steps-projection-6}
\|F(u_k(\omega)) \| \leq C(\omega)(K_0 + K_1 \|F(v^*)\|^{\alpha} + K_2 C(\omega)^{\a / (1 - \a)}) + \|F(v^*)\|.
\end{align}
Therefore, the sequence \{$\|F(u_k(\omega))\|$\} is upper bounded by $C_1(\omega) = C(\omega) (K_0 + K_1 \|F(v^*)\|^{\alpha} + K_2 C(\omega)^{\a / (1 - \a)}) + \|F(v^*)\|$. \\
\textbf{Case $\alpha = 1$}. \\
For $\a=1$ by Proposition~\ref{prop-a} we have
\begin{align}\label{eq-lemma-steps-projection-7}
\|F(u_k(\omega)) -  F(v^*)\| &\leq \|u_k(\omega) - v^*\| (L_0+ L_1 \|F(v^*)\|) \exp (L_1 \|u_k(\omega) - v^*\|) .
\end{align}
Therefore, for all $k \ge 0$,
\begin{align}\label{eq-lemma-steps-projection-8}
\|F(u_k(\omega))\| &\leq \|F(u_k(\omega)) - F(v^*)\| + \|F(v^*)\| \cr
&\leq \|u_k(\omega) - v^*\| (L_0+ L_1 \|F(v^*)\|) \exp (L_1 \|u_k(\omega) - v^*\|)+  \|F(v^*)\| . 
\end{align}
Since $ \omega \in A$, we have
$\|u_k(\omega) - v^*\| \leq  C(\omega)$ for all $k \geq 0$, 
which when used in~\plaineqref{eq-lemma-steps-projection-8}, implies that for all $k \ge 0$,
\begin{align}\label{eq-lemma-steps-projection-9}
\|F(u_k(\omega))\| &\leq  \|u_k(\omega) - v^*\| (L_0+ L_1 \|F(v^*)\|) \exp (L_1 \|u_k(\omega) - v^*\|) + \|F(v^*)\| \cr
&\leq C(\omega) (L_0+ L_1 \|F(v^*)\|) \exp (L_1 C(\omega) ) +  \|F(v^*)\| .
\end{align}
Hence, the sequence
$\{\|F(u_k(\omega))\|\}$ is upper bounded by $\overline{C}_1(\omega)$, where $\overline{C}_1(\omega) = C(\omega) (L_0+ L_1 \|F(v^*)\|) \exp (L_1 C(\omega) ) + \|F(v^*)\| $. 
Now, for both cases $\a \in (0,1)$ and $\alpha=1$ in~\plaineqref{eq-lemma-steps-projection-6} and~\plaineqref{eq-lemma-steps-projection-9}, respectively, we have that  $\|F(u_k(\omega))\|$ is upper bounded by $ \max \{ C_1(\omega), \overline{C}_1(\omega)\}$. Thus 
\[\mathbb{P}(F(u_k) \text{ is bounded}) = 1.\]

Then almost surely we have (i) $F(u_k)$ is bounded (ii) $\sum^{\infty}_k \beta_k (\mathbb{I}(A_k) - \mathbb{E}[ \mathbb{I}(A_k) \mid \mathcal{F}_{k-1}])$ converges to $S < \infty$, (iii) $\mathbb{E}[\mathbb{I}(A_k) \mid \mathcal{F}_{k-1}] \geq \frac{1}{2}$. Consider $\omega \in \Omega$ such that (i), (ii), and (iii) hold, then in a view of~\eqref{eq-lemma-steps-projection-3} we have

\begin{align}
\label{eq-lemma-steps-projection-10}
\sum_{k=0}^{\infty} \beta_k x_k(\omega) &\geq \min \left\{1, \frac{1}{\overline{C}_1(\omega) + 2 \sigma}\right\} S(\omega) + \frac{1}{2} \min \left\{1, \frac{1}{\overline{C}_1(\omega) + 2 \sigma}\right\}  \sum_{k=0}^{\infty} \beta_k  = \infty
\end{align}
where the last equality comes from $\sum_{k=0}^{\infty} \beta_k = \infty$, which concludes the proof.

\end{proof}

\textbf{Proof of Theorem~\ref{thm-proj-as-converge}}.
\begin{proof}
By Lemma~\ref{lemma-proj-as-bound}, we have
\begin{equation}
\label{eq-thm-grad-as-13}
\sum_{k=0}^{\infty}  \gamma_k  \, \dist^p(u_k, U^*) < \infty \quad \rm{a.s.}
\end{equation}
Due to Lemma~\ref{lemma-stepsizes-nonsummable-projection}, the series $\sum_{k=0}^{\infty} \gamma_k = \infty$ almost surely,  then it follows that 
\begin{equation}\label{eq-thm-grad-as-14}
\liminf_{k\to\infty}\dist^p(u_k, U^*) =0 \qquad a.s.
\end{equation}
Since $\|u_k - u^*\|$ converges {\it a.s.} for any given $u^* \in U^*$, the sequence $\{u_k\}$ is bounded {\it a.s.} and has accumulation points {\it a.s.} Let $\{k_{i}\}$ be an index sequence, such that 
\begin{equation}\label{eq-thm-grad-as-15}
\lim_{i \rightarrow \infty}  \dist^p(u_{k_i}, U^*) = \liminf_{k\to\infty}\dist^p(u_k, U^*) =0 \quad \it{ a. s.}
\end{equation}
We assume that the sequence $\{u_{k_i}\}$ is convergent with a limit point $\bar{u}$; otherwise, we choose a convergent subsequence. Therefore,
\begin{equation}\label{eq-thm-grad-as-16}
\lim_{i \rightarrow \infty}  \|u_{k_i} - \bar u \| = 0 \quad \it{ a. s.}
\end{equation}
Then,  by~\plaineqref{eq-thm-grad-as-14},  $\dist (\bar{u}, U^*) = 0$, thus $\bar u \in U^*$ {\it a.s.} since $U^*$ is closed. Since the sequence $\{\|u_k - u^*\|\}$ converges {\it a. s. } for all $u^* \in U^*$, by~\plaineqref{eq-thm-grad-as-16} we have
\begin{equation}\label{eq-thm-grad-as-17}
\lim_{k \rightarrow \infty}  \|u_{k} - \bar u \| = 0 \quad \it{a. s. }
\end{equation}
\end{proof}

\subsection{Proof of Lemma~\ref{lemma-EF-bound}}\label{sec-lemma-EF}
\begin{proof}
By taking the total expectation in~\eqref{eq-grad-3-1} in Lemma~\ref{lemma-proj-as-bound} and using the definition of the stepsize $\g_k$, 
we obtain for any solution $u^*\in U^*$ and all $k\ge0$,
\begin{align}\label{eq-lemma-grad-expect-1}
\mathbb{E}[\|u_{k+1}-u^*\|^2] &\leq \mathbb{E}[\|u_k -u^*\|^2] - 2 \mu \mathbb{E}[\g_k \dist^p(u_k, U^*)]  + 3\beta_k^2(2 \sigma^2 + 1).
\end{align}

The equation~\plaineqref{eq-lemma-grad-expect-1} satisfies the conditions of Lemma~\ref{lemma-polyak11-det} with 
\begin{align} \bar v_{k} = \mathbb{E} [\|u_{k}  - u^*\|^2], \quad  \bar a_k = 0, \quad  \bar z_k =  2  \mu \mathbb{E}[ \gamma_k \, \dist^p(u_{k}, U^*)], \quad  \bar b_k = 3\beta_k^2(2 \sigma^2 + 1).
\end{align}

Thus, by Lemma~\ref{lemma-polyak11-det}, it follows that the sequence $\{\mathbb{E} [\|u_{k}  - u^*\|^2]\}$ converges to a non-negative scalar for any $u^*\in U^*$.
Therefore, the sequence $\{\mathbb{E} [\|u_{k}  - u^*\|^2]\}$ is bounded  for all $u^* \in U^*$.  Next, using the property of $\a$-symmetric operators, we show that $\{\mathbb{E}[\|F(u_k)\|]\}$ is bounded. Let $v^* \in U^*$ be an arbitrary, but fixed solution.  Then,  by the $\alpha$-symmetric property of $F$, we have that 
\begin{equation}
\begin{aligned}
\label{eq-lemma-grad-expect-2}
\|F(u_k)\| &\leq \|F(u_k) - F(v^*)\| + \|F(v^*)\| \cr
&\leq \|u_k - v^*\| (K_0 + K_1 \|F(v^*)\|^{\alpha} + K_2 \|u_k - v^* \|^{\a / (1 - \a)}) + \|F(v^*)\| .
\end{aligned}
\end{equation}
Taking expectation, we obtain
\begin{equation}
\begin{aligned}
\label{eq-lemma-grad-expect-3}
\mathbb{E}[\|F(u_k)\|] \leq (K_0 + K_1 \|F(v^*)\|^{\alpha}) \mathbb{E}[\|u_k - v^*\|]  + K_2  \mathbb{E}[\|u_k - v^* \|^{1 + \a / (1 - \a)}] + \|F(v^*)\| .
\end{aligned}
\end{equation}
Notice, that $\mathbb{E}[\|u_k - v^* \|^{1 + \a / (1 - \a)})] = \mathbb{E}[(\|u_k - v^* \|^2)^{1/2(1 - \a)})]$, and for $\alpha \leq 1/2$, the quantity $1/2(1 - \a) \leq 1$. Thus, we can apply Jensen inequality for concave function
\[\mathbb{E}[(\|u_k - v^* \|^2)^{1/2(1 - \a)})] \leq \mathbb{E}[\|u_k - v^* \|^2]^{1/2(1 - \a)}.\]
Therefore, using these results and Jensen inequality for the first term in equation~\plaineqref{eq-lemma-grad-expect-3}, we obtain
\begin{equation}
\begin{aligned}
\label{eq-lemma-grad-expect-4}
\mathbb{E}[\|F(u_k)\|] \leq (K_0 + K_1 \|F(v^*)\|^{\alpha}) \mathbb{E}[\|u_k - v^*\|^2]^{1/2}  + K_2  \mathbb{E}[\|u_k - v^* \|^2]^{1/2(1 - \a)} + \|F(v^*)\| .
\end{aligned}
\end{equation}

Since $\mathbb{E}[\|u_k - v^* \|^2]$ is bounded, $\mathbb{E}[\|F(u_k)\|]$ is bounded by some constant $C_F > 0$ for all $k \geq 0$.
\end{proof}

\subsection{Proof of Theorem~\ref{thm-proj-rates}}
\label{sec-thm-proj-rates}
\begin{proof}
Letting $y =  P_{U^*}(u_k)$ in equation~\plaineqref{eq-grad-2} in Lemma~\ref{lemma-proj-as-bound} and using $p$-quasi sharpness we obtain
\begin{align}\label{eq-thm-grad-rates-1}
\|u_{k+1}-P_{U^*}(u_k)\|^2 
&\le \|u_k -P_{U^*}(u_k)\|^2 
- 2 \mu \g_k \dist^p(u_k, U^*) \cr
& \quad + 2\g_k \langle e_k, u_{k}-P_{U^*}(u_k)\rangle + 3 \beta_k^2 (\|e_k\|^2 + \|e_k^2\|^2 + 1). 
\end{align}
By the definition of the distance function, we have
\[\dist^2(u_{k+1}, U^*) \leq \|u_{k+1}-P_{U^*}(u_k)\|^2.\]
Thus,
\begin{align}\label{eq-thm-grad-rates-2}
\dist^2(u_{k+1}, U^*) 
&\le \dist^2(u_k, U^*) - 2 \mu  \g_k \dist^p(u_k, U^*) \cr
& \quad + 2\g_k \langle e_k, u_{k}-P_{U^*}(u_k)\rangle + 3 \beta_k^2 (\|e_k\|^2 + \|e_k^2\|^2 + 1). 
\end{align}
By Assumption~\ref{asum-samples} and the law of total expectation, and independence of samples $\xi_k$ and $\xi_k^2$, it follows that 
\begin{align}
\mathbb{E}[\g_k \langle e_k, u_{k}-P_{U^*}(u_k)\rangle] &= \mathbb{E}[\mathbb{E}[\g_k \langle e_k, u_{k}-P_{U^*}(u_k)\rangle \mid \mathcal{F}_{k-1}]] \cr
&=\mathbb{E}[\mathbb{E}[\g_k \mid \mathcal{F}_{k-1}]  \langle \mathbb{E}[e_k \mid |\mathcal{F}_{k-1}], u_{k}-P_{U^*}(u_k)\rangle ] \cr
&=0.
\end{align}
Also, we have $\mathbb{E}[\mathbb{E}[\|e_k^1\| \mid \mathcal{F}_{k-1}] \leq \sigma^2$ and $\mathbb{E}[\mathbb{E}[\|e_k^2\| \mid \mathcal{F}_{k-1}] \leq \sigma^2$. Thus, by taking the total expectation in~\eqref{eq-thm-grad-rates-2}, we obtain
\begin{align}\label{eq-thm-grad-rates-3}
\mathbb{E}[\dist^2(u_{k+1}, U^*)] 
&\le \mathbb{E}[\dist^2(u_k, U^*)] - 2 \mu \mathbb{E}[\g_k \dist^p(u_k, U^*)] + 3\beta_k^2 (2\sigma^2  +1). 
\end{align}
We aim to upper bound $2\mu \mathbb{E}[\g_k \dist^p(u_k, U^*)]$. To do so consider an event $A_k$, defined as follows:
\[A_k = \{\|F(u_k)\| + \|e_k\| \leq 2 (\mathbb{E}[\|F(u_k)\|] + \mathbb{E}[\|e_k\|]) \}.\]
Then, by the law of total expectation, we obtain
\begin{align}\label{eq-thm-grad-rates-4}
\mathbb{E}[\g_k \dist^p(u_k, U^*)] = \mathbb{E}[\g_k \dist^p(u_k, U^*)| A_k] \mathbb{P}(A_k) + \mathbb{E}[\g_k \dist^p(u_k, U^*)| \overline{A}_k] \mathbb{P}(\overline{A}_k),
\end{align}
where $\overline{A}$ denotes the complement of an event $A$.
We want to provide a lower bound on $\mathbb{P}(A_k)$. To do so, we upperbound $\mathbb{P}(\overline{A}_k)$ using Markov's inequality, as follows:
\begin{align}\label{eq-thm-grad-rates-5}
\mathbb{P}(\overline{A}_k) &= \mathbb{P}\left(\{\|F(u_k)\| + \|e_k\| > 2 (\mathbb{E}[\|F(u_k)\|] + \mathbb{E}[\|e_k\|]) \}\right) \cr
&\leq \frac{\mathbb{E}[\|F(u_k)\|] + \mathbb{E}[\|e_k\|]}{2(\mathbb{E}[\|F(u_k)\|] + \mathbb{E}[\|e_k\|])} \cr&= \frac{1}{2}.
\end{align}
Thus,
\begin{align}\label{eq-thm-grad-rates-6}
\mathbb{E}[\g_k \dist^p(u_k, U^*)] &= \mathbb{E}[\g_k \dist^p(u_k, U^*)| A_k] (1 - \mathbb{P}(\overline{A}_k)) + \mathbb{E}[\g_k \dist^p(u_k, U^*)| \overline{A}_k] \mathbb{P}(\overline{A}_k) \cr 
&\geq \frac{1}{2} \mathbb{E}[\g_k \dist^p(u_k, U^*)| A_k]+ \mathbb{E}[\g_k \dist^p(u_k, U^*)| \overline{A}_k] \mathbb{P}(\overline{A}_k) \cr
&\geq \frac{1}{2} \mathbb{E}[\g_k \dist^p(u_k, U^*)| A_k].
\end{align}
By the definition of the event $A_k$, we have
\begin{align}
\label{eq-thm-grad-rates-7}
\mathbb{E}[\g_k \dist^p(u_k, U^*)| A_k]  &= \beta_k \mathbb{E}\left[\min \left\{1, \frac{1}{\|\Phi(u_k, \xi_k)\|}\right\} \dist^p(u_k, U^*)| A_k\right] \cr
&\geq \beta_k \mathbb{E}\left[\min \left\{1, \frac{1}{\|F(u_k)\|+ \|e_k\|}\right\} \dist^p(u_k, U^*)| A_k]\right] \cr
&\geq \beta_k \min \left\{1, \frac{1}{2 (\mathbb{E}[\|F(u_k)\|]+ \mathbb{E}[\|e_k\|])}\right\}  \mathbb{E}[\dist^p(u_k, U^*)| A_k].
\end{align}
By Lemma~\ref{lemma-EF-bound}, $\mathbb{E}[\|F(u_k)\|] \leq C_F$ for all $k \geq 0$, and by Assumption~\ref{asum-samples} and Jensen inequality, we have $\mathbb{E}[\|e_k\|]\leq \mathbb{E}[\|e_k\|^{2}]^{1/2} \leq \sigma$. 
Thus, it follows that 
\begin{align}
\label{eq-thm-grad-rates-70}
\mathbb{E}[\g_k \dist^p(u_k, U^*)] 
\geq \frac{1}{2}\beta_k \min \left\{1, \frac{1}{2 (C_F+\sigma)}\right\}  \mathbb{E}[\dist^p(u_k, U^*)].\end{align}
Combining equations~\plaineqref{eq-thm-grad-rates-3} and~\plaineqref{eq-thm-grad-rates-70}, and using $a=\mu \min \left\{1, \frac{1}{2 (C_F+ \sigma)}\right\}$, we obtain
\begin{align}\label{eq-thm-grad-rates-8}
\mathbb{E}[\dist^2(u_{k+1}, U^*)] 
&\le \mathbb{E}[\dist^2(u_k, U^*)]  - a \beta_k \mathbb{E}[\dist^p(u_k, U^*)] + 3\beta_k^2 (2\sigma^2  +1). 
\end{align}
Now let $D_k = \mathbb{E}[\dist^2(u_k, U^*)]$, and consider the following two cases: 

\textbf{Case $p=2$}.
When $p=2$, equation~\plaineqref{eq-thm-grad-rates-8} satisfies the assumptions of Lemma~\ref{Lemma7-stich} with 
\begin{align}
r_{k} = D_k, 
\quad \alpha_k = \beta_k, \quad s_k = 0, \quad d=a, \quad c =3(2 \sigma^2  +1).
\end{align}
Then, by Lemma~\ref{Lemma7-stich}, 
we get the following convergence rate for all $k \geq 1$,
\begin{align}
 D_{k+1} \leq  \dfrac{8  D_0}{k^2} + \frac{6 (2 \sigma^2 + 1)}{ a^2 k}.
\end{align}

\textbf{Case $p> 2$}.
When $p \geq 2$, by applying telescoping sum to inequality~\plaineqref{eq-thm-grad-rates-8} and rearranging the terms we obtain
\begin{align}\label{eq-thm-grad-rates-9-new}
\mathbb{E}[ a \sum_{t=0}^k   \beta_k \dist^p(u_k, U^*)] 
&\le D_0  - D_{k+1} + 3 (2\sigma^2  +1) \sum_{t=0}^k \beta_k^2.  
\end{align}
Since $p \geq 2$, the function $\dist^p(\cdot, U^*)$ is convex, thus by defining $\bar{u}_k = (\sum_{t=0}^k \beta_k)^{-1} \sum_{t=0}^k \beta_k u_t$ and applying Jensen inequality be obtain
\[ (\sum_{t=0}^k \beta_k ) \mathbb{E}[\dist^p (\bar{u}_k, U^*)] \leq \mathbb{E}[ \sum_{t=0}^k   \beta_k \dist^p(u_k, U^*)] .\]
Since $p \geq 2$, by applying Jensen inequality one more time, we obtain
\[ (\bar{D}_k)^{p/2} = \left(\mathbb{E}[\dist^2(\bar{u}_k, U^*)]\right)^{p/2} \leq  \mathbb{E}\left[\left(\dist^2(\bar{u}_k, U^*)\right)^{p/2}\right] = \mathbb{E}[\dist^p(\bar{u}_k, U^*)]
.\]

Applying these estimates, we get
\begin{align}\label{eq-thm-grad-rates-11-new}
(\bar{D}_{k})^{p/2} \sum_{t=0}^{k} \beta_t \leq \sum_{t=0}^{k} \beta_t   D_t^{p/2}
&\le \frac{1}{a}\,\left(D_0 - D_{k+1}  + 3(2\sigma^2  +1)\sum_{t=0}^k \beta_t^2\right).
\end{align}

Since $\beta_k = \frac{b}{(k+1)^q}$, with $b > 0, 1 >q > 1/2$,
then $\{\beta_k\}$ satisfies the conditions of Lemma~\ref{lemma-EF-bound}. Also, by Lemma~\ref{lemma-kq-series-rates} the following inequalities hold: for all $k\ge1,$
\begin{align}
\label{eq-thm-grad-rates-12-new}
\sum^{k}_{t = 0} \beta_t \geq \frac{b}{1 - q} ((k+1)^{1 - q} - 2^{1 - q}),\qquad\quad
\sum^{k}_{t = 0} \beta_t^2 \leq \frac{b^2}{2 q -1}.
\end{align}
Combining equations~\plaineqref{eq-thm-grad-rates-11-new} and ~\plaineqref{eq-thm-grad-rates-12-new}, and omitting $D_{k+1}$, we obtain
\begin{align}\label{eq-thm-grad-rates-13-new}
(\bar{D}_{k})^{p/2} 
&\le \dfrac{(1 -q) \left(D_0  + 3b^2 (2 \sigma^2 +1 ) / (2 q -1)\right)}{ a b \, \left((k+1)^{1 - q} - 2^{1 - q}\right)}.
\end{align}
Raising both sides of the preceding inequality in power $2/p$, we obtain
\begin{align}\label{eq-thm-grad-rates-14-new}
\bar{D}_{k}
&\le \dfrac{(1 - q)^{2/p} 
\left(D_0  +3 b^2 (2 \sigma^2 +1 ) / (2 q -1)\right)^{2/p}}{ (ab)^{2/p} \, \left((k+1)^{1 - q} - 2^{1 - q}\right)^{2/p}}.
\end{align}

\end{proof}

\section{Korpelevich Method analysis}
\label{Appendix-Korpelevich}
\begin{lemma} \label{Lemma-korpelevich-basic} 
Let $U$ be a closed convex set. Then, for the iterate sequences
$\{u_k\}$ and $\{h_k\}$ generated by the stochastic Korpelevich method~\plaineqref{eq-korpelevich-stoch} and $y \in U$ and $k \ge 0$,
\begin{equation*}
\begin{aligned}
 \|h_{k+1}  - y\|^2 &\leq \|h_k - y\|^2  - \|h_k - u_k\|^2 - 2 \g_k \langle F(u_k), u_k - y \rangle  - 2 \g_k \langle e_k^2, u_k - y \rangle \cr
 & \ \ + 3 \g_k^2 \|F(h_k) - F(u_k)\|^2 + 3 \g_k^2 (\|e_k^2\|^2 +  \|e_k^1\|^2),
\end{aligned}
\end{equation*}
where  $e_{k}^1 = \Phi(h_k, \xi_k^1) - F(h_k)$, $e_{k}^2 = \Phi(u_k, \xi_k^2) - F(u_k)$ for all $k \ge 0$.
\end{lemma}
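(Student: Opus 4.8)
The plan is to derive the inequality directly from the two projection steps in~\eqref{eq-korpelevich-stoch}, using the properties of $P_U(\cdot)$ from Lemma~\ref{lem-proj} together with the three-point identity
\[\|h_{k+1}-y\|^2=\|h_k-y\|^2-\|h_{k+1}-h_k\|^2+2\langle h_{k+1}-h_k,\,h_{k+1}-y\rangle .\]
First I would apply property~\eqref{eq-proj1} to $h_{k+1}=P_U(h_k-\g_k\Phi(u_k,\xi_k^2))$ with the test point $y\in U$, which gives $\langle h_k-h_{k+1},\,y-h_{k+1}\rangle\le\g_k\langle\Phi(u_k,\xi_k^2),\,y-h_{k+1}\rangle$, equivalently $2\langle h_{k+1}-h_k,\,h_{k+1}-y\rangle\le-2\g_k\langle\Phi(u_k,\xi_k^2),\,h_{k+1}-y\rangle$. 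Plugging this into the identity yields
\[\|h_{k+1}-y\|^2\le\|h_k-y\|^2-\|h_{k+1}-h_k\|^2-2\g_k\langle\Phi(u_k,\xi_k^2),\,h_{k+1}-y\rangle .\]

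Next I would split $h_{k+1}-y=(h_{k+1}-u_k)+(u_k-y)$ and $\Phi(u_k,\xi_k^2)=F(u_k)+e_k^2$; the $u_k-y$ piece produces exactly the two target terms $-2\g_k\langle F(u_k),u_k-y\rangle$ and $-2\g_k\langle e_k^2,u_k-y\rangle$. The remaining quantity $-\|h_{k+1}-h_k\|^2-2\g_k\langle\Phi(u_k,\xi_k^2),h_{k+1}-u_k\rangle$ is handled by invoking the first projection step: property~\eqref{eq-proj1} applied to $u_k=P_U(h_k-\g_k\Phi(h_k,\xi_k^1))$ with test point $h_{k+1}\in U$ gives $\langle h_k-u_k,h_{k+1}-u_k\rangle\le\g_k\langle\Phi(h_k,\xi_k^1),h_{k+1}-u_k\rangle$. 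Writing $\Phi(u_k,\xi_k^2)=\Phi(h_k,\xi_k^1)+\big(\Phi(u_k,\xi_k^2)-\Phi(h_k,\xi_k^1)\big)$ and using this bound replaces $-2\g_k\langle\Phi(h_k,\xi_k^1),h_{k+1}-u_k\rangle$ by $-2\langle h_k-u_k,h_{k+1}-u_k\rangle$; then the identity $\|h_{k+1}-h_k\|^2=\|h_{k+1}-u_k\|^2-2\langle h_{k+1}-u_k,h_k-u_k\rangle+\|h_k-u_k\|^2$ makes the cross terms cancel, so $-\|h_{k+1}-h_k\|^2-2\langle h_k-u_k,h_{k+1}-u_k\rangle=-\|h_{k+1}-u_k\|^2-\|h_k-u_k\|^2$, which is where the term $-\|h_k-u_k\|^2$ of the claim comes from.

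It then remains to bound the leftover cross term $-2\g_k\langle\Phi(u_k,\xi_k^2)-\Phi(h_k,\xi_k^1),\,h_{k+1}-u_k\rangle$, for which I would use Young's inequality $-2\langle a,b\rangle\le\|a\|^2+\|b\|^2$ with $a=\g_k\big(\Phi(u_k,\xi_k^2)-\Phi(h_k,\xi_k^1)\big)$ and $b=h_{k+1}-u_k$; the resulting $\|h_{k+1}-u_k\|^2$ cancels the $-\|h_{k+1}-u_k\|^2$ produced above, leaving $\g_k^2\|\Phi(u_k,\xi_k^2)-\Phi(h_k,\xi_k^1)\|^2$. Finally, decomposing $\Phi(u_k,\xi_k^2)-\Phi(h_k,\xi_k^1)=(F(u_k)-F(h_k))+e_k^2-e_k^1$ and applying Cauchy--Schwarz in the form $\|a+b+c\|^2\le3(\|a\|^2+\|b\|^2+\|c\|^2)$ turns this into $3\g_k^2\|F(h_k)-F(u_k)\|^2+3\g_k^2(\|e_k^2\|^2+\|e_k^1\|^2)$, which gives the stated inequality. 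I expect the main obstacle to be the bookkeeping rather than any individual estimate: one must apply \emph{both} projection inequalities with the correct test points ($y$ for the update to $h_{k+1}$, $h_{k+1}$ for the intermediate $u_k$) and then choose the algebraic identities so that the three squared norms $\|h_{k+1}-h_k\|^2$, $\|h_k-u_k\|^2$ and $\|h_{k+1}-u_k\|^2$ recombine without leaving any spurious positive term; the rest is routine expansion.
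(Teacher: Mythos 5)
Your proposal is correct and follows essentially the same route as the paper's proof: both apply the projection inequality for $h_{k+1}$ with test point $y$ (you derive it from \eqref{eq-proj1} plus the three-point identity, the paper uses \eqref{eq-proj3} directly, which yields the same intermediate bound), then invoke the projection inequality for $u_k$ with test point $h_{k+1}$, recombine the squared norms so that $-\|h_k-u_k\|^2$ survives, cancel $\|h_{k+1}-u_k\|^2$ via Young's inequality, and finish with the factor-$3$ bound on $\|\Phi(h_k,\xi_k^1)-\Phi(u_k,\xi_k^2)\|^2$. No gaps; the bookkeeping you describe matches the paper's computation step for step.
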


\begin{proof}
Let $k\ge 0$ be arbitrary but fixed. By the definition of $h_{k+1}$ in~\plaineqref{eq-korpelevich-stoch}, we have 
$\|h_{k+1} - y\| = \|P_{U} (h_{k} - \g_k \Phi(u_k, \xi^2_k)) - y\|$ for any $y \in U$. Using the projection inequality,
we obtain for any $y\in U$,
\begin{equation}
\begin{aligned}
\label{eq-lemma-korp-2}
 \|h_{k+1}  - y\|^2 & \leq \|h_{k} - \g_k \Phi(u_k, \xi^2_k) - y\|^2 - \|h_{k+1} -h_{k} + \g_k \Phi(u_k, \xi^2_k)\|^2\\
& = \|h_k - y\|^2 - \| h_{k+1} - h_k\|^2 + 2 \g_k \langle \Phi(u_k, \xi^2_k), y -  h_{k+1}\rangle.
\end{aligned}
\end{equation}
Next, we consider the term $\|h_{k+1} - h_k\|^2$, where we add and subtract $u_k$, thus
\begin{equation}
\begin{aligned}
\label{eq-lemma-korp-3}
 \|h_{k+1}  - h_k\|^2 & = \|h_{k+1} - u_k\|^2 + \|h_{k} - u_k\|^2 - 2 \langle h_{k+1} - u_k, h_k - u_k \rangle.
\end{aligned}
\end{equation}
Adding and subtracting $ 2 \g_k \langle \Phi(h_k, \xi^1_k), u_k-h_{k+1}\rangle$, and combining~\plaineqref{eq-lemma-korp-2} and~\plaineqref{eq-lemma-korp-3} we obtain
\begin{equation}
\begin{aligned}
\label{eq-lemma-korp-4}
 \|&h_{k+1}  - y\|^2  \leq \|h_k - y\|^2 - \| h_{k+1} - u_k \|^2 - \|h_k - u_k\|^2 +2 \langle h_{k+1} - u_k, h_k - u_k \rangle \cr
 & \ \ + 2 \g_k \langle \Phi(u_k, \xi^2_k), y - u_k + u_k-  h_{k+1}\rangle  + 2 \g_k \langle \Phi(h_k, \xi^1_k) - \Phi(h_k, \xi^1_k), u_k-  h_{k+1}\rangle\\
 &\leq \|h_k - y\|^2 - \| h_{k+1} - u_k \|^2 - \|h_k - u_k\|^2 +2 \langle h_{k+1} - u_k, h_k - \g_k \Phi(h_k, \xi^1_k) - u_k \rangle \cr
 & \ \ + 2 \g_k \langle \Phi(u_k, \xi^2_k), y - u_k \rangle + 2\g_k   \langle \Phi(h_k, \xi^1_k)- \Phi(u_k, \xi^2_k), h_{k+1} - u_k \rangle.
\end{aligned}
\end{equation}
Since $u_{k} =  P_{U}(h_k - \g_k \Phi(h_k, \xi_k^1))$ and $h_{k+1}\in U$, by the projection inequality in~\eqref{eq-proj1}, it follows that
\[2 \langle h_{k+1} - u_k, h_k - \g_k \Phi(h_k, \xi^1_k) - u_k \rangle \leq 0 .\]
Using Cauchy-Schwarz inequality and relation $2ab \leq a^2 + b^2$ for $a,b \in \mathbb{R}$, we obtain
\begin{equation*}
\begin{aligned}
2\g_k   \langle \Phi(h_k, \xi^1_k) - \Phi(u_k, \xi^2_k), h_{k+1} - u_k \rangle   
&\leq 2 \g_k \|\Phi(h_k, \xi^1_k) - \Phi(u_k, \xi^2_k) \| \|h_{k+1} - u_k\| \cr
&\leq \g_k^2 \|\Phi(h_k, \xi^1_k) - \Phi(u_k, \xi^2_k)\|^2 + \|h_{k+1} - u_k\|^2.
\end{aligned}
\end{equation*}
Using triangle inequality and relation $(\sum_{i=1}^m a_i)^2 \leq m \sum_{i=1}^m a_i^2$ we get
\begin{equation*}
\begin{aligned}
\|\Phi(h_k, \xi^1_k) - \Phi(u_k, \xi^2_k)\|^2 
& = \|\Phi(h_k, \xi^1_k) - F(h_k) + F(h_k) - F(u_k) + F(u_k) - \Phi(u_k, \xi^2_k)\|^2 \cr
&\leq 3 (\|e_k^1\|^2 + \|F(h_k) - F(u_k)\|^2 + \|e_k^2\|^2).
\end{aligned}
\end{equation*}
Combining the preceding three estimates with~\plaineqref{eq-lemma-korp-4}, we get the stated relation
\begin{equation*}
\begin{aligned}
 \|h_{k+1}  - y\|^2 &\leq \|h_k - y\|^2  - \|h_k - u_k\|^2 - 2 \g_k \langle F(u_k), u_k - y \rangle  - 2 \g_k \langle e_k^2, u_k - y \rangle \cr
 & \ \ + 3 \g_k^2 \|F(h_k) - F(u_k)\|^2 + 3 \g_k^2 (\|e_k^2\|^2 +  \|e_k^1\|^2).
\end{aligned}
\end{equation*}
\end{proof}

\subsection{Proof of Lemma~\ref{Lemma-Korpelevich}} \label{sec-lem41}
\begin{proof}
By Lemma~\ref{Lemma-korpelevich-basic} we have for all $k \geq 0$ and for all $y \in U$,
\begin{equation}
\begin{aligned}
\label{eq-lm2-korp-1}
 \|h_{k+1}  - y\|^2 &\leq \|h_k - y\|^2  - \|h_k - u_k\|^2 - 2 \g_k \langle F(u_k), u_k - y \rangle  - 2 \g_k \langle e_k^2, u_k - y \rangle \cr
 & \ \ + 3 \g_k^2 \|F(h_k) - F(u_k)\|^2 + 3 \g_k^2 (\|e_k^2\|^2 +  \|e_k^1\|^2),
\end{aligned}
\end{equation}
with  $e_{k}^1 = \Phi(h_k, \xi_k^1) -F(h_k)$ and $e_{k}^2 = \Phi(u_k, \xi_k^2) - F(u_k) $ for all $k\ge0$.
We want to estimate the term $\|F(h_k) -  F(u_{k}) \|^2$ on the RHS of the inequality using the fact that $F(\cdot)$ is an $\a$-symmetric operator for two cases\textbf{ (a) $\alpha \in (0,1)$} and \textbf{(b) $\alpha  = 1$}. 

\textbf{Case $\a \in (0,1)$}. 
Using the alternative characterization of $\a$-symmetric operators from Proposition~\ref{prop-a}(a) (as given in ~\plaineqref{alpha-property}), when $\a \in (0,1)$, the next inequality holds for any $k\ge0$,
\begin{equation}
\begin{aligned}
\label{eq-lm2-korp-2}
\|F(h_k) - F(u_k)\| \leq \|h_k - u_k\| (K_0 + K_1 \|F(h_k)\|^{\alpha} + K_2 \|h_k - u_k \|^{\a / (1 - \a)}).
\end{aligned}
\end{equation}
We want to separate $\|F(h_{k})\|$ into two parts: stochastic approximation of operator $\Phi(h_{k}, \xi_{k}^1)$ and error $e_{k}^1$. Recall that  $e_k^1 = F(h_{k}) - \Phi(h_{k}, \xi_{k}^1)$, then based on triangle inequality $\|F(h_{k})\| \leq \|\Phi(h_{k}, \xi_{k}^1)\| + \|e_{k}^1\|$, and since $\a \leq 1$ we obtain 
\begin{align}
\label{eq-lm2-korp-2-1}
\|F(h_{k})\|^{\a} \leq \|\Phi(h_{k}, \xi_{k}^1)\|^{\a} + \|e_{k}^1\|^{\a}.
\end{align}
Thus, combining this fact with~\plaineqref{eq-lm2-korp-2} we get the following estimation
\begin{equation}
\begin{aligned}
\label{eq-lm2-korp-3}
\|F(h_k) - F(u_k)\| \leq \|h_k - u_k\| (K_0 + K_1 \|\Phi(h_k, \xi_k^1)\|^{\alpha} + K_1\|e_k^1\|^{\alpha} + K_2 \|h_k - u_k \|^{\a / (1 - \a)}).
\end{aligned}
\end{equation}

By the projection property~\plaineqref{eq-proj3} and the stepsize choice~\plaineqref{stepsizes-korpelevich}, we have
\begin{align}
\label{eq-lm2-korp-4}
\| h_{k} - u_k\| \leq \g_k \| \Phi(h_{k}, \xi_k^1)\| = \beta_k \min\{1, \frac{1}{\|\Phi(h_k, \xi_k^1)\|}\} \|\Phi(h_k, \xi_k^1)\| \leq \beta_k \leq 1.
\end{align}
Then, $ K_2 \|h_k - u_k \|^{\a / (1 - \a)} \leq K_2$, and
\begin{equation}
\begin{aligned}
\label{eq-lm2-korp-5}
\gamma_k \|F(h_k) &- F(u_k)\| \leq \gamma_k (K_0 + K_1 \|\Phi(h_k, \xi_k^1)\|^{\alpha} + K_1\|e_k^1\|^{\alpha} + K_2 ) \|h_k - u_k\| \cr
&\leq \beta_k ( K_0 \min \{1, \frac{1}{\|\Phi(h_{k}, \xi_k^1)\|}\} +  K_1 \min \{1, \frac{1}{\|\Phi(h_{k}, \xi_k^1)\|}\} \|\Phi(h_k, \xi_k^1)\|^{\alpha} ) \|h_k - u_k\| \cr
& \ \ +\beta_k(K_1\|e_k^1\|^{\alpha} + K_2 \min \{1, \frac{1}{\|\Phi(h_{k}, \xi_k^1)\|}\} ) \|h_k - u_k\| \cr
&\leq \beta_k ( K_0 +  K_1  +  K_2  ) \|h_k - u_k\| + \beta_k K_1\|e_k^1\|^{\alpha} \|h_k - u_k\|.
\end{aligned}
\end{equation}
By inequality~\plaineqref{eq-lm2-korp-4}, we have $\|h_k - u_k\| \leq 1$, and using this estimate in equation~\plaineqref{eq-lm2-korp-5} we obtain
\begin{equation}
\begin{aligned}
\label{eq-lm2-korp-6}
\gamma_k \|F(h_k) - F(u_k)\|  &\leq \beta_k ( K_0 +  K_1  +  K_2  ) \|h_k - u_k\| + \beta_k K_1\|e_k^1\|^{\alpha}. \cr
\end{aligned}
\end{equation}

\textbf{Case $\a=1$}.
Based on the alternative characterization of $\a$-symmetric operators from Proposition~\ref{prop-a}(b) (as given in ~\plaineqref{stepsizes-korpelevich}), when $\a = 1$, the following inequality holds for any $k\ge0$,
\begin{equation}
\begin{aligned}
\label{eq-lm2-korp-7}
\|F(h_{k}) -  F(u_k)\| &\leq \|h_k - u_k\| (L_0+ L_1 \|F(h_k)\|) \exp (L_1 \|h_{k} - u_k\|).
\end{aligned}
\end{equation}
We upperbound $\|F(h_k)\|$ using equation~\plaineqref{eq-lm2-korp-2-1}  and get
\begin{equation}
\begin{aligned}
\label{eq-lm2-korp-8}
\|F(h_{k}) -  F(u_k)\| &\leq \|h_k - u_k\| (L_0+ L_1 \|\Phi(h_{k}, \xi_k^1)\| + L_1 \|e_k^1\|) \exp (L_1 \|h_{k} - u_k\|).
\end{aligned}
\end{equation}
Note that relation in~\eqref{eq-lm2-korp-4} holds irrespective of the value of $\a$. Thus, since $\|h_k-u_k\|\le 1$,
we have $\exp(L_1 \| h_{k} - u_k\|) \leq \exp(L_1 \beta_k)$, and  we obtain
\begin{equation}
\begin{aligned}
\label{eq-lm2-korp-9}
\gamma_k \|F(h_{k}) -  F(u_k)\| &\leq  \gamma_k (L_0+ L_1 \|\Phi(h_{k}, \xi_k^1)\| + L_1 \|e_k^1\|) \exp (L_1 \beta_k) \|h_k - u_k\| \cr
&=  \exp (L_1 \beta_k)  L_0  \beta_k \min\{1, \frac{1}{\|\Phi(h_{k}, \xi_k^1)\|} \}  \|h_k - u_k\| \cr
& \ \ + \exp (L_1 \beta_k) L_1  \beta_k\min\{1, \frac{1}{\|\Phi(h_{k}, \xi_k^1)\|} \} \| \Phi(h_{k}, \xi_k^1)\| \|h_k - u_k\| \cr
& \ \ +  \exp (L_1 \beta_k) L_1 \beta_k\min\{1, \frac{1}{\|\Phi(h_{k}, \xi_k^1)\|} \} \|e_k^1\| \|h_k - u_k\| \cr
&\leq \exp (L_1 \beta_k) \beta_k ( L_0 + L_1 + L_1 \|e_k^1\|) \|h_k - u_k\| .
\end{aligned}
\end{equation}
By inequality~\plaineqref{eq-lm2-korp-4}, we have $\|h_k - u_k\| \leq 1$. Using this estimate in~\plaineqref{eq-lm2-korp-9}, we further obtain
\begin{equation}
\begin{aligned}
\label{eq-lm2-korp-10}
\gamma_k \|F(h_k) - F(u_k)\|  &\leq \exp (L_1 \beta_k) \beta_k ( L_0 + L_1 ) \|h_k - u_k\| + \exp (L_1 \beta_k) \beta_k \|e_k^1\|. \cr
\end{aligned}
\end{equation}

Now, we are done with the cases of $\a$ values. Let 
\begin{align}
\label{eq-lm2-korp-c-a-def}
C_{a}(\beta_k, \alpha) = \begin{cases}
(K_0 + K_1 + K_2), 
\quad \text{when } \alpha \in (0,1), \\
\exp (L_1 \beta_k) ( L_0 + L_1),
\quad \text{when } \alpha = 1.
\end{cases}
\end{align}
Also, define 
\begin{align}
\label{eq-lm2-korp-c-e-def}
C_{e}(\beta_k, \alpha) = \begin{cases}
K_1, 
\quad \text{when } \alpha \in (0,1), \\
\exp (L_1 \beta_k),
\quad \text{when } \alpha = 1.
\end{cases}
\end{align}
Then, by inequality $(\sum_{i=1}^m a_i)^2\le m\sum_{i=1}^m a_i^2$, for both cases we have
\begin{align}
\label{eq-lm2-korp-11}
\g_k^2 \|F(h_{k}) -  F(u_k)\|^2 \leq 2 \beta_k^2 C_a(\beta_k, \alpha)^2 \|h_k - u_k\|^2 + 2 \beta_k^2 C_e(\beta_k, \alpha)^2 \|e_k^1\|^{2 \alpha}.
\end{align}
Combining preceding inequality with~\plaineqref{eq-lm2-korp-1} we obtain that for any $k \ge 0$,
\begin{equation}
\begin{aligned}
\label{eq-lm2-korp-12}
 \|h_{k+1}  - y\|^2 &\leq \|h_k - y\|^2  - (1 - 6 \beta_k^2 C_a(\beta_k, \alpha)^2) \|h_k - u_k\|^2 - 2 \g_k \langle F(u_k), u_k - y \rangle \cr
 & \ \ - 2 \g_k \langle e_k^2, u_k - y \rangle  + 6  \beta_k^2 C_e(\beta_k, \alpha)^2 \|e_k^1\|^{2 \alpha} + 3 \g_k^2 (\|e_k^2\|^2 +  \|e_k^1\|^2) .\cr
\end{aligned}
\end{equation}

Next, we plug $y=u^*$, where $u^* \in U^*$ is an arbitrary solution and use $p$-quasi sharpness of the operator $F$ to obtain
\begin{equation}
\begin{aligned}
\label{eq-lm2-korp-13}
 \|h_{k+1}  - u^*\|^2 
 &\leq \|h_k - u^*\|^2  - (1 - 6 \beta_k^2 C_a(\beta_k, \alpha)^2) \|h_k - u_k\|^2 - 2 \g_k \mu \dist^p(u_k, U^*) \cr
 & \ \ - 2 \g_k \langle e_k^2, u_k - u^* \rangle  + 6  \beta_k^2 C_e(\beta_k, \alpha)^2 \|e_k^1\|^{2 \alpha} + 3 \g_k^2 (\|e_k^2\|^2 +  \|e_k^1\|^2). \cr
\end{aligned}
\end{equation}
By the stepsize choice $\gamma_k \leq \beta_k$, thus
\begin{equation}
\begin{aligned}
\label{eq-lm2-korp-14}
 \|h_{k+1}  - u^*\|^2 &\leq \|h_k - u^*\|^2  - (1 - 6 \beta_k^2 C_a(\beta_k, \alpha)^2) \|h_k - u_k\|^2 - 2 \g_k \mu \dist^p(u_k, U^*) \cr
 & \ \ - 2 \g_k \langle e_k^2, u_k - u^* \rangle  + 6  \beta_k^2 C_e(\beta_k, \alpha)^2 \|e_k^1\|^{2 \alpha} + 3 \beta_k^2 (\|e_k^2\|^2 +  \|e_k^1\|^2).
\end{aligned}
\end{equation}

Since $\sum_{k=0}^{\infty} \beta_k^2 < \infty$, it follows that $\beta_k \to 0$. By definitions of $C_a(\beta_k, \alpha)$ and $C_e(\beta_k, \alpha)$ in~\plaineqref{eq-lm2-korp-c-a-def} and~\plaineqref{eq-lm2-korp-c-e-def}, respectively, there exists $N \ge 0$ such that the stepsizes satisfy $1 - 6 \beta_k^2 C_a(\beta_k, \alpha)^2 \ge \frac{1}{2} $ and $C_e(\beta_k, \alpha)^2 \leq \max \{K_1, \exp(L_1 \beta_k )\} \leq \max \{K_1, 2\} $. Thus, the following inequality holds for any $k \ge N$,
\begin{equation}
\begin{aligned}
\label{eq-lm2-korp-15}
 \|h_{k+1}  - u^*\|^2 &\leq \|h_k - u^*\|^2  - \frac{1}{2}\|h_k - u_k\|^2 - 2 \g_k \mu \dist^p(u_k, U^*) \cr
 & \ \ - 2 \g_k \langle e_k^2, u_k - u^* \rangle  + 3 \beta_k^2 (\|e_k^2\|^2 +  \|e_k^1\|^2 + 2 \max \{K_1, 2\}  \|e_k^1\|^{2 \alpha}). \cr
\end{aligned}
\end{equation}
By rearranging the terms, defining $v_k = \|h_k - u^*\|^2 + \frac{1}{2}\|h_{k-1} - u_{k-1}\|^2 + 2 \g_k \mu \dist^p(u_k, U^*) $ and adding, and substituting $\frac{1}{2}\|h_{k-1} - u_{k-1}\|^2 + 2 \g_{k-1} \mu \dist^p(u_{k-1}, U^*) $  on the RHS  we obtain
\begin{equation}
\begin{aligned}
\label{eq-lm2-korp-15-1}
v_{k+1} &\leq v_k  - \frac{1}{2}\|h_{k-1} - u_{k-1}\|^2 - 2 \g_{k-1} \mu \dist^p(u_{k-1}, U^*) \cr
 & \ \ - 2 \g_k \langle e_k^2, u_k - u^* \rangle  + 3 \beta_k^2 (\|e_k^2\|^2 +  \|e_k^1\|^2 + 2 \max \{K_1, 2\}  \|e_k^1\|^{2 \alpha}). \cr
\end{aligned}
\end{equation}

Recalling that $e_k^1=\Phi(h_k,\xi_k^1)-F(h_k), \; e_k^2=\Phi(u_k,\xi_k^2)-F(u_k)$ and  
using the stochastic properties of $\xi_k^1, \xi_k^2$ imposed by Assumption~\ref{asum-samples} and method's updates, we have 
\[\mathbb{E}[\gamma_k \langle e_k^2, u_k - u^* \rangle \mid \mathcal{F}_{k-1}] = \mathbb{E}[\gamma_k  \langle \mathbb{E}[e_k^2 \mid \mathcal{F}_{k-1} \cup \{\xi_k^1\}], u_k - u^* \rangle \mid \mathcal{F}_{k-1}] = 0
,\]
since stepsizes $\gamma_k$ is measurable in $\mathcal{F}_{k-1} \cup \{\xi_{k}^1\}$. Also, it holds that for all $k \geq 0$,
 \[\mathbb{E}[\mathbb{E}[ \|e_k^2\|^2 \mid \mathcal{F}_{k-1} \cup \{\xi_k^1\}]| \mathcal{F}_{k-1}] \leq \sigma^2, \quad \text{and} \quad \mathbb{E}[ \|e_k^1\|^2 \mid \mathcal{F}_{k-1}] \leq \sigma^2 .\]
Moreover, since $\a \leq 1$, the conditional expectation $\mathbb{E}[\|e_k^1\|^{2\a}|\mathcal{F}_{k-1}]$ is finite, and by Jensen inequality, it follows that for all $k \ge 0$,
\[\mathbb{E}[\|e_k^1\|^{2\a}|\mathcal{F}_{k-1}] \le \sigma^{2 \alpha}.\]

Therefore, by taking the conditional expectation on $\mathcal{F}_{k-1}$
in relation~\plaineqref{eq-lm2-korp-15-1}, we obtain 
for all $u^*\in U^*$ and for all $k\ge N$,
\begin{equation}
\begin{aligned}
\label{eq-lm2-korp-16}
 \mathbb{E}[ v_{k+1} \mid \mathcal{F}_{k-1}] &\leq v_k  - \frac{1}{2}\|h_k - u_k\|^2 - 2 \mu \g_{k-1}  \dist^p(u_k, U^*) \cr
 & \ \ + 6 \beta_k^2 (  \sigma^2 + \max \{K_1, 2\} \sigma^{2 \alpha}). 
\end{aligned}
\end{equation}
By Lemma~\ref{lemma-polyak11}, it follows that the sequence $\{\|h_{k}- u^*\|^2\}$ converges {\it a.s.}\ to a non-negative scalar for any $u^*\in U^*$, and  almost surely we have
\begin{equation}
\label{eq-lm2-korp-17}
\sum_{k=0}^{\infty}  \gamma_k \dist^p(u_k, U^*)  \,  < \infty, \quad \sum_{k=0}^{\infty} \|h_k - u_k\|^2 < \infty.
\end{equation}
Since the sequence $\{\|h_k - u^*\|^2\}$ 
converges {\it a.s.} for all $u^*\in U^*$, it follows that the sequence $\{\|h_k - u^*\|\}$ is bounded {\it a.s.} for all $u^* \in U^*$.

\end{proof}

\subsection{Proof of Theoreom~\ref{thm-korpelevich-as-converge}}\label{sec-Thm42}

\begin{lemma}
\label{lemma-stepsizes-nonsummable-korpelevich}
    The stepsizes $\gamma_k $ are given by \eqref{stepsizes-korpelevich} are nonsummable almost surely,
    \begin{equation}
        \sum_{k=0}^{\infty} \gamma_k = \infty \quad \rm{a.s.}
    \end{equation}
\end{lemma}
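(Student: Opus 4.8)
The plan is to transcribe the argument of Lemma~\ref{lemma-stepsizes-nonsummable-projection}, adapted to the Korpelevich stepsize $\gamma_k=\beta_k\min\{1,1/\|\Phi(h_k,\xi_k^1)\|\}$, using Lemma~\ref{Lemma-Korpelevich} in place of Lemma~\ref{lemma-proj-as-bound} as the source of almost sure boundedness. First I would set $x_k:=\min\{1,1/\|\Phi(h_k,\xi_k^1)\|\}$ and introduce the event $A_k=\{\|e_k^1\|\le 2\sigma\}$ with $e_k^1=\Phi(h_k,\xi_k^1)-F(h_k)$, so that $x_k\ge x_k\,\mathbb{I}(A_k)$. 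On $A_k$ the triangle inequality $\|\Phi(h_k,\xi_k^1)\|\le\|F(h_k)\|+\|e_k^1\|\le\|F(h_k)\|+2\sigma$ yields $x_k\,\mathbb{I}(A_k)\ge\min\{1,1/(\|F(h_k)\|+2\sigma)\}\,\mathbb{I}(A_k)$. Writing $p_k:=\mathbb{E}[\mathbb{I}(A_k)\mid\mathcal{F}_{k-1}]$ and adding and subtracting it gives the decomposition
\[
\sum_{k=0}^{\infty}\beta_k x_k\ \ge\ \sum_{k=0}^{\infty}\beta_k\min\Big\{1,\tfrac{1}{\|F(h_k)\|+2\sigma}\Big\}(\mathbb{I}(A_k)-p_k)\ +\ \sum_{k=0}^{\infty}\beta_k\min\Big\{1,\tfrac{1}{\|F(h_k)\|+2\sigma}\Big\}p_k .
\]

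Next I would control the three ingredients. For the conditional probability, since $h_k$ is $\mathcal{F}_{k-1}$-measurable, Assumption~\ref{asum-samples} and Jensen give $\mathbb{E}[\|e_k^1\|\mid\mathcal{F}_{k-1}]\le\sigma$, so $\overline{A}_k\subseteq\{\|e_k^1\|>2\,\mathbb{E}[\|e_k^1\|\mid\mathcal{F}_{k-1}]\}$ and conditional Markov yields $\mathbb{P}(\overline{A}_k\mid\mathcal{F}_{k-1})\le\tfrac12$, hence $p_k\ge\tfrac12$. For the martingale term, $S_n:=\sum_{k=0}^{n}\beta_k(\mathbb{I}(A_k)-p_k)$ is an $\{\mathcal{F}_k\}$-martingale with orthogonal increments, so $\mathbb{E}[S_n^2]=\sum_{k=0}^{n}\beta_k^2\,\mathbb{E}[(\mathbb{I}(A_k)-p_k)^2]\le\tfrac14\sum_{k=0}^{\infty}\beta_k^2<\infty$, and by the $L^2$ martingale convergence theorem (Theorem 4.4.6 in~\cite{durrett2019probability}) $S_n$ converges almost surely to a finite limit $S$. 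For the a.s.\ boundedness of $\|F(h_k)\|$, Lemma~\ref{Lemma-Korpelevich} gives that $\{\|h_k-v^*\|\}$ is bounded a.s.\ for a fixed $v^*\in U^*$; on that full-measure event, fixing a bound $C(\omega)$ and applying Proposition~\ref{prop-a} (case $\alpha\in(0,1)$ via~\eqref{alpha-property}, case $\alpha=1$ via~\eqref{alpha-property-1}) bounds $\|F(h_k)\|$ by a constant $\bar C(\omega)$ depending only on $C(\omega)$, $\|F(v^*)\|$, and the operator constants.

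Finally, on the intersection of the full-measure events where (i) $\|F(h_k)\|\le\bar C(\omega)$ for all $k$, (ii) $S_n\to S(\omega)$ finite, and (iii) $p_k\ge\tfrac12$ for all $k$, the displayed decomposition gives
\[
\sum_{k=0}^{\infty}\beta_k x_k\ \ge\ \min\Big\{1,\tfrac{1}{\bar C(\omega)+2\sigma}\Big\}\Big(S(\omega)+\tfrac12\sum_{k=0}^{\infty}\beta_k\Big)=\infty,
\]
since $\sum_{k=0}^{\infty}\beta_k=\infty$; as $\gamma_k=\beta_k x_k$, this proves $\sum_{k=0}^{\infty}\gamma_k=\infty$ a.s. The main point where the Korpelevich case differs from the projection case — and the only place any care is needed — is the conditioning: here $\gamma_k$ is measurable with respect to $\mathcal{F}_{k-1}\cup\{\xi_k^1\}$ and $A_k$ depends only on $e_k^1$, so the martingale-difference structure of $S_n$ and the bound $p_k\ge\tfrac12$ both go through directly with $\mathcal{F}_{k-1}$, without introducing the extra independent sample used in the projection method. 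Everything else is a routine transcription of Lemma~\ref{lemma-stepsizes-nonsummable-projection}.
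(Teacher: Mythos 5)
Your proposal is correct and follows essentially the same route as the paper's own proof: the event $A_k=\{\|e_k^1\|\le 2\sigma\}$, the martingale decomposition of $\sum_k\beta_k x_k$ with $p_k\ge\tfrac12$ via conditional Markov, $L^2$-bounded martingale convergence for $S_n$, and the almost sure bound on $\|F(h_k)\|$ obtained from Lemma~\ref{Lemma-Korpelevich} together with Proposition~\ref{prop-a}. Your remark about the conditioning (that $\gamma_k$ is $\mathcal{F}_{k-1}\cup\{\xi_k^1\}$-measurable so no additional independent sample is needed) matches the paper's treatment as well.
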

\begin{proof}
We will show that $\sum_{k=0}^{\infty} \beta_k  \min \left\{1, \frac{1}{\|\Phi(h_k, \xi_k^1)\|} \right\} = \infty$ almost surely by the sequences of lower bound on this series. Consider the following event:
\[A_k = \{ \|e_k^1\| \leq 2 \sigma \},\]
where $e_k^1 = \Phi(h_k, \xi_k^1) - F(h_k)$ is a stochastic error from the sample for the clipping stepsize $\gamma_k$. Define $x_k = \min \left\{1, \frac{1}{\|\Phi(h_k, \xi_k^1)\|} \right\}$, then, 
\begin{align}\label{eq-thm-korpelevich-as-3}
x_k  = x_k  \mathbb{I}(A_k) + x_k \mathbb{I}(\overline{A}_k) \geq x_k  \mathbb{I}(A_k),
\end{align}
where the random variable $\mathbb{I}(A_k)$ is the indicator function of the event $A_k$ taking value 1 when the event occurs, and taking value 0 otherwise.

By the definition of $x_k$, the triangle inequality and definition of $\mathbb{I}(A_k)$, we have
\begin{align}
\label{eq-thm-korpelevich-as-6}
x_k \mathbb{I}(A_k)  &=  \min \left\{1, \frac{1}{\|\Phi(h_k, \xi_k^1)\|} \right\} \mathbb{I}(A_k) \cr
&\geq  \min \left\{1, \frac{1}{\|F(h_k)\|+ \|e_k^1\|}\right\} \mathbb{I}(A_k) \cr
&\geq  \min \left\{1, \frac{1}{\|F(h_k)\| + 2 \sigma}\right\}  \mathbb{I}(A_k) .\cr
\end{align}
Adding and subtracting $\mathbb{E}[\mathbb{I}(A_k) | \mathcal{F}_{k-1}]$ and combining \eqref{eq-thm-korpelevich-as-3}, \eqref{eq-thm-korpelevich-as-6} we have the following lower bound
\begin{align}
\label{eq-thm-korpelevich-as-6-1}
\sum_{k=0}^{\infty} \beta_k x_k \geq &
 \sum_{k=0}^{\infty} \beta_k  \min \left\{1, \frac{1}{\|F(h_k)\| + 2 \sigma}\right\}  (\mathbb{I}(A_k) - \mathbb{E}[\mathbb{I}(A_k) | \mathcal{F}_{k-1}] )\cr
 & + \sum_{k=0}^{\infty} \beta_k  \min \left\{1, \frac{1}{\|F(h_k)\| + 2 \sigma}\right\} \mathbb{E}[\mathbb{I}(A_k) | \mathcal{F}_{k-1}]. 
\end{align}

To bound $p_k := \mathbb{E}[\mathbb{I}(A_k) | \mathcal{F}_{k-1}] = \mathbb{P}(A_k \mid \mathcal{F}_{k-1})$ we  provide an upperbound on $\mathbb{P}(\overline{A}_k \mid \mathcal{F}_{k-1})$ using Markov's inequality and Assumption~\ref{asum-samples}:
\begin{align}\label{eq-thm-korpelevich-as-4}
\mathbb{P}(\overline{A}_k \mid \mathcal{F}_{k-1}) = \mathbb{P}( \|e_k^1\| > 2 \mathbb{E}[\|e_k^1\| \mid \mathcal{F}_{k-1}] \}) \leq \frac{\mathbb{E}[\|e_k^1\| \mid \mathcal{F}_{k-1}]}{2 \mathbb{E}[\|e_k^1\| \mid \mathcal{F}_{k-1}])} = \frac{1}{2}.
\end{align}
This implies $\mathbb{E}[\mathbb{I}(A_k) | \mathcal{F}_{k-1}] \geq \frac{1}{2}$. Define $S_n = \sum_{k=0}^{n} \beta_k (\mathbb{I}(A_k) - \mathbb{E}[\mathbb{I}(A_k) | \mathcal{F}_{k-1}] )$, by construction, $\{S_n\}$ is a martingale: 
\[ \mathbb{E}[S_{n+1} \mid S_0, \ldots, S_n] = S_n + \mathbb{E}[\beta_{n+1} (\mathbb{I}(A_{n+1}) - \mathbb{E}[\mathbb{I}(A_{n+1}) | \mathcal{F}_{n}] ) \mid S_0, \ldots, S_n] = S_n. \]
We want to show that $\lim_{n \rightarrow \infty} S_n \rightarrow S < \infty$ almost surely. We provide an upper bound for $\mathbb{E}[S_n^2]$:
\begin{align}
\mathbb{E}[S_n^2] = \sum_{k=0}^{n} \beta_k^2 \mathbb{E}[(\mathbb{I}(A_k) - p_k )^2] + 2\sum_{0 \leq k < i \leq n} \beta_k^2 \mathbb{E}[ (\mathbb{I}(A_k) - p_k ) (\mathbb{I}(A_i) - p_i )]
\end{align}
By the law of total expectation,and noting that $ \mathbb{E}[\mathbb{I}(A_k) - p_k \mid \mathcal{F}_{k-1}] = 0 $ for any $k$, we find that
for all $0 \leq k < i \leq n$,
\begin{align}
\mathbb{E}[ (\mathbb{I}(A_k) - p_k ) (\mathbb{I}(A_i) - p_i )] = \mathbb{E}[(\mathbb{I}(A_k) - p_k )  \mathbb{E}[(\mathbb{I}(A_i) - p_i )\mid \mathcal{F}_{i-1}]]  = 0,
\end{align}
{implying that, for all $n\ge0$,}
\begin{align}
\mathbb{E}[S_n^2] = \sum_{k=0}^{n} \beta_k^2 \mathbb{E}[(\mathbb{I}(A_k) - p_k )^2] 
\end{align}
{Since $ \mathbb{E}[(\mathbb{I}(A_k) - p_k )^2 \mid \mathcal{F}_{k-1}]  = {\rm Var}(I(A_k)  \mid \mathcal{F}_{k-1})$ and the random variable} $\mathbb{I}(A_k) $ is a Bernoulli given $ \mathcal{F}_{k-1}$ with mean $p_k$, its variance cannot exceed 1/4, i.e., 
\[ 
\mathbb{E}[(\mathbb{I}(A_k) - p_k)^2 \mid \mathcal{F}_{k-1}] = \rm{Var}(\mathbb{I}(A_k)  \mid \mathcal{F}_{k-1}) \leq \frac{1}{4}.
\]
By taking the total expectation we get $\mathbb{E}[(\mathbb{I}(A_k) - p_k)^2 ] \leq \frac{1}{4}$, and combining the preceding two relations, we obtain
\[\mathbb{E}[S_n^2] \leq \frac{1}{4} \sum_{k=0}^{n} \beta_k^2 \leq \infty.
\]
From Theorem 4.4.6. in \cite{durrett2019probability} it follows that $S_n$ converges to $S < \infty$ almost surely.

To further lower bound $x_k \mathbb{I}(A_k)$ we show \emph{a.s.} boundedness of $\|F(h_k)\|$  for all $k \geq 0$, using property of $\a$-symmetric operators. To estimate $\|F(h_k)\|$, we add and subtract $F(v^*)$, where $v^* \in U^*$ is an arbitrary but fixed solution, and get 
\[\|F(h_{k})\| = \|F(h_{k}) - F(v^*) + F(v^*)\| \leq \|F(h_{k}) - F(v^*)\| + \|F(v^*)\|.\]
Define the following event:
\[A = \{\omega \in \Omega: \; \exists \;  C(\omega) \in \mathbb{R} \text{ s.t.} \|h_k(\omega) - v^*\| < C(\omega) \; \forall \; k \geq 0 \}.\]
Based on Lemma~\ref{Lemma-Korpelevich}, the sequence $\{\|h_k - v^*\|\}$ is bounded {\it a.s.}, and thus $\mathbb{P}(A) = 1$. Let $\omega \in A$, now we can estimate $\|F(h_k(\omega))\|$ using the $\alpha$-symmetric assumption on the operator. \\
\textbf{Case $\alpha \in (0,1)$}.\\
\begin{equation}
\begin{aligned}
\label{eq-thm-korpelevich-as-7}
\|F(h_k(\omega)) - F(u^*)\| \leq \|h_k(\omega) - v^*\| (K_0 + K_1 \|F(v^*)\|^{\alpha} + K_2 \|h_k(\omega) - v^* \|^{\a / (1 - \a)}).
\end{aligned}
\end{equation}
Since $\omega \in A$, it follows that 
$\|h_{k}(\omega) - v^*\|   \leq C(\omega)$ for all $k \geq 0$.
Using this fact and~\plaineqref{eq-thm-korpelevich-as-7} we obtain that for all $k \ge 0$,
\begin{align}\label{eq-thm-korpelevich-as-8}
\|F(h_k(\omega)) \| \leq C(\omega)(K_0 + K_1 \|F(v^*)\|^{\alpha} + K_2 C(\omega)^{\a / (1 - \a)}) + \|F(v^*)\|.
\end{align}
Therefore, the sequence \{$\|F(h_k(\omega))\|$\} is upper bounded by $C_1(\omega) = C(\omega) (K_0 + K_1 \|F(v^*)\|^{\alpha} + K_2 C(\omega)^{\a / (1 - \a)}) + \|F(v^*)\|$. \\
\textbf{Case $\alpha = 1$}. \\
For $\a=1$ by Proposition~\ref{prop-a} we have
\begin{align}\label{eq-thm-korpelevich-as-9}
\|F(h_{k}(\omega)) -  F(v^*)\| &\leq \|h_k(\omega) - v^*\| (L_0+ L_1 \|F(v^*)\|) \exp (L_1 \|h_{k}(\omega) - v^*\|) .
\end{align}
Therefore, for all $k \ge 0$,
\begin{align}\label{eq-thm-korpelevich-as-10}
\|F(h_{k}(\omega))\| &\leq \|F(h_{k}(\omega)) - F(v^*)\| + \|F(v^*)\| \cr
&\leq \|h_k(\omega) - v^*\| (L_0+ L_1 \|F(v^*)\|) \exp (L_1 \|h_{k}(\omega) - v^*\|)+  \|F(v^*)\| . 
\end{align}
Since $ \omega \in A$, we have
$\|h_{k}(\omega) - v^*\| \leq  C(\omega)$ for all $k \geq 0$, 
which when used in~\plaineqref{eq-thm-korpelevich-as-10}, implies that for all $k \ge 0$,
\begin{align}\label{eq-thm-korpelevich-as-11}
\|F(h_{k}(\omega))\| &\leq  \|h_k(\omega) - v^*\| (L_0+ L_1 \|F(v^*)\|) \exp (L_1 \|h_{k}(\omega) - v^*\|) + \|F(v^*)\| \cr
&\leq C(\omega) (L_0+ L_1 \|F(v^*)\|) \exp (L_1 C(\omega) ) +  \|F(v^*)\| .
\end{align}
Hence, the sequence
$\{\|F(h_k(\omega))\|\}$ is upper bounded by $\overline{C}_1(\omega)$, where $\overline{C}_1(\omega) = C(\omega) (L_0+ L_1 \|F(v^*)\|) \exp (L_1 C(\omega) ) + \|F(v^*)\| $. 
Now, for both cases $\a \in (0,1)$ and $\alpha=1$ in~\plaineqref{eq-thm-korpelevich-as-8} and~\plaineqref{eq-thm-korpelevich-as-11}, respectively, we have that  $\|F(h_k(\omega))\|$ is upper bounded by $ \max \{ C_1(\omega), \overline{C}_1(\omega)\}$. Thus 
\[\mathbb{P}( \{F(h_k)\}\text{ is bounded}) = 1.\]

Thus, almost surely we have (i)  $\{F(h_k)\}$ is bounded, (ii) $\sum^{n}_{k=0} \beta_k (\mathbb{I}(A_k) - \mathbb{E}[ \mathbb{I}(A_k) \mid \mathcal{F}_{k-1}])$ converges to $S < \infty$ as $n\to\infty$, and (iii) $\mathbb{E}[\mathbb{I}(A_k) | \mathcal{F}_{k-1}] \geq \frac{1}{2}$. Now, consider $\omega \in \Omega$ such that (i), (ii), and (iii) hold, then in a view of~\eqref{eq-thm-korpelevich-as-6-1} we have
\begin{align}
\label{eq-thm-korpelevich-as-6-2}
\sum_{k=0}^{\infty} \beta_k x_k(\omega) \geq 
\min \left\{1, \frac{1}{\overline{C}_1(\omega) + 2 \sigma}\right\} S(\omega) +
\frac{1}{2} \min \left\{1, \frac{1}{\overline{C}_1(\omega) + 2 \sigma}\right\}  \sum_{k=0}^{\infty} \beta_k  = \infty,
\end{align}
where the last equality comes from $\sum_{k=0}^{\infty} \beta_k = \infty$, which concludes the proof.

\end{proof}

\textbf{Proof of Theorem~\ref{thm-korpelevich-as-converge}}
\begin{proof}
By Lemma~\ref{Lemma-Korpelevich}, we almost surely have
\begin{equation}
\label{eq-thm-korpelevich-as-1}
\sum_{k=0}^{\infty}  \gamma_k \dist^p(u_k, U^*)  \,  < \infty.
\end{equation}
By Lemma~\ref{lemma-stepsizes-nonsummable-korpelevich}, we have $\sum_{k=0}^{\infty} \gamma_k  = \infty$ almost surely, than from~\eqref{eq-thm-korpelevich-as-1} it follows that
\begin{equation}
\label{eq-thm-korpelevich-as-14}
    \liminf_{k \rightarrow \infty} \dist^p(u_k, U^*) = 0 \quad \text{\it a.s.}
\end{equation}
By Lemma~\ref{Lemma-Korpelevich}, the sequence $\{\|h_k - u^*\|\}$ 
converges {\it a.s.}\ 
for any given $u^*\in U^*$.
Thus, the sequence $\{h_k\}$ is bounded {\it a.s.} and, consequently,
it has accumulation points {\it a.s.}
In view of relation~\plaineqref{eq-lemma-korpelevich-2} in Lemma~\ref{Lemma-Korpelevich}, it follows that
\begin{align}
\label{eq-sums-squares}
\lim_{k\to\infty}\|h_k-u_k\|=0\qquad\hbox{a.s.}
\end{align}
Therefore, 
the sequences $\{u_k\}$ and $\{h_k\}$ have the same accumulation points a.s.

Now, let $\{k_i\mid i\ge 1\}$ be a (random) index sequence such that 
\begin{equation}\label{eq-korpelevich-an2}
\lim_{i\to\infty} \dist^p(u_{k_i}, U^*)=\liminf_{k\to\infty}\dist^p(u_k, U^*)=0 \quad {\it a.s.}\end{equation}
Without loss of generality, we may assume that $\{h_{k_i}\}$ is a convergent sequence (for otherwise, we will select a convergent subsequence), and let $\bar u$ be its (random) limit point, i.e.,
\begin{equation}\label{eq-korpelevich-an3}
\lim_{i\to\infty} \|h_{k_i}-\bar u\|=0
\qquad{\it a.s.}\end{equation}
By relation~\plaineqref{eq-lemma-korpelevich-2}, it follows that
$\lim_{k\to\infty}\| h_k-u_k\|=0$ a.s., which in view of the preceding relation implies that
\[\lim_{i\to\infty} \|u_{k_i}-\bar u\|=0
\qquad{\it a.s.}\]
By continuity of the distance function $\dist(\cdot,U^*)$, from relation~\plaineqref{eq-korpelevich-an2} we conclude that $\dist(\bar u,U^*)=0$ {\it a.s.}, which implies that $\bar u\in U^*$ almost surely since the set $U^*$ is closed.
 Since the sequence $\{\|h_k - u^*\|^2\}$ 
converges {\it a.s.}\ 
for any $u^*\in U^*$, it follows that 
$\lim_{k\to\infty}\|h_k - \bar u\|=0$ {\it a.s.} 
By relation~\plaineqref{eq-sums-squares} we conclude that 
$\lim_{k\to\infty}\|u_k - \bar u\|=0$ a.s.
\end{proof}

\subsection{Proof of Lemma~\ref{lemma-korpelevich-EF-bound}}\label{sec-lem43}
\begin{proof}
The choice of parameters $\beta_k$, ensures that $1 - 6\beta_k^2 (K_0 + K_1 + K_3)^2 \geq 1/2$. Then, by taking the expectation in~\plaineqref{eq-lemma-korpelevich} of Lemma~\ref{Lemma-Korpelevich} and using Assumption~\ref{asum-samples}, and definition of $ C_{e}(\beta_k, \alpha) = K_1$ for $\a \in (0,1)$, we obtain 
   \begin{equation}
        \begin{aligned}
        \label{eq-korp-exp-conv-1}
         \mathbb{E}[\|h_{k+1}  - u^*\|^2] &\leq \mathbb{E}[\|h_k - u^*\|^2]  - \frac{1}{2}\mathbb{E}[\|h_k - u_k\|^2] - 2 \mathbb{E}[\g_k \mu \dist^p(u_k, U^*)] \cr
         &\quad + 6 \beta_k^2 (  \sigma^2 + K_1 \sigma^{2 \alpha}). \cr
        \end{aligned}
    \end{equation}

The equation~\plaineqref{eq-korp-exp-conv-1} satisfies the condition of Lemma~\ref{lemma-polyak11-det} with 
\begin{align} \bar v_{k} = \mathbb{E} [\|u_{k}  - u^*\|^2], \quad  \bar a_k = 0, \quad  \bar b_k = 6 \beta_k^2 (  \sigma^2 + K_1 \sigma^{2 \alpha}), \cr
\bar z_k =  2  \mu \mathbb{E}[ \gamma_k \, \dist^p(u_{k}, U^*)] + \frac{1}{2}\mathbb{E}[\|h_k - u_k\|^2].
\end{align}

By Lemma~\ref{lemma-polyak11}, it follows that the sequence $\mathbb{E} [\|h_{k}  - u^*\|^2]$ converges to a non-negative scalar for any $u^*\in U^*$.
Since the sequence $\{\mathbb{E} [\|h_{k}  - u^*\|^2]\}$ 
converges for all $u^*\in U^*$, it follows that the sequence $\{\mathbb{E} [\|h_{k}  - u^*\|^2]\}$ is bounded  for all $u^* \in U^*$. Next, using property of $\a$-symmetric operators, we show that $\mathbb{E}[\|F(h_k)\|]$ is bounded for all $k \geq 0$. Let $v^* \in U^*$ be an arbitrary but fixed solution. Since $\a \leq 1/2$, it holds that
\begin{equation}
\begin{aligned}
\label{eq-korp-exp-conv-2}
\|F(h_k)\| &\leq \|F(h_k) - F(v^*)\| + \|F(v^*)\| \cr
&\leq \|h_k - v^*\| (K_0 + K_1 \|F(v^*)\|^{\alpha} + K_2 \|h_k - v^* \|^{\a / (1 - \a)}) + \|F(v^*)\| .
\end{aligned}
\end{equation}
Taking the expectation, we obtain
\begin{equation}
\begin{aligned}
\label{eq-korp-exp-conv-3}
\mathbb{E}[\|F(h_k)\|] \leq (K_0 + K_1 \|F(v^*)\|^{\alpha}) \mathbb{E}[\|h_k - v^*\|]  + K_2  \mathbb{E}[\|h_k - v^* \|^{1 + \a / (1 - \a)})] + \|F(v^*)\| .
\end{aligned}
\end{equation}
Notice that $\mathbb{E}[\|h_k - v^* \|^{1 + \a / (1 - \a)})] = \mathbb{E}[(\|h_k - v^* \|^2)^{1/2(1 - \a)})]$ and, for $\alpha \leq 1/2$, the quantity $1/2(1 - \a) \leq 1$. Thus, we can apply Jensen inequality for concave function
\[\mathbb{E}[(\|h_k - v^* \|^2)^{1/2(1 - \a)})] \leq \mathbb{E}[\|h_k - v^* \|^2]^{1/2(1 - \a)}.\]
Therefore, using the preceding relation and Jensen inequality for the first term on the RHS of equation~\plaineqref{eq-korp-exp-conv-3}, we obtain
\begin{equation}
\begin{aligned}
\label{eq-korp-exp-conv-4}
\mathbb{E}[\|F(h_k)\|] \leq (K_0 + K_1 \|F(v^*)\|^{\alpha}) \mathbb{E}[\|h_k - v^*\|^2]^{1/2}  + K_2  \mathbb{E}[\|h_k - v^* \|^2]^{1/2(1 - \a)} + \|F(v^*)\| .
\end{aligned}
\end{equation}
Since $\mathbb{E}[\|h_k - v^* \|^2]$ is bounded, it follows that $\mathbb{E}[\|F(h_k)\|]$ is bounded by some constant $C_F > 0$ for all $k \geq 0$.
\end{proof}

\subsection{Proof of Theorem~\ref{thm-korpelevich-rates}}\label{sec-thm44}

\begin{proof}
The choice of the parameters $\beta_k$ ensures that $1 - 6 \beta_k^2 (K_0 + K_1 + K_2)^2 \ge \frac{1}{2}$, then  by letting $u^* = P_{U^*}(h_k)$ in~\plaineqref{eq-lm2-korp-15} in the proof of Lemma~\ref{Lemma-Korpelevich}, with  $C_{e}(\beta_k, \alpha) = K_1$, we get
\begin{equation}
\begin{aligned}
\label{eq-thm-rates-korp-1}
 \|h_{k+1}  - P_{U^*}(h_k)\|^2 &\leq \dist^2(h_k, U^*)  - \frac{1}{2}\|h_k - u_k\|^2 - 2 \g_k \mu \dist^p(u_k, U^*) \cr
 &\quad - 2 \g_k \langle e_k^2, u_k - u^* \rangle  + 3 \beta_k^2 (\|e_k^2\|^2 +  \|e_k^1\|^2 + 2 K_1 \|e_k^1\|^{2 \alpha}). \cr
\end{aligned}
\end{equation}
By the definition of the distance function, we have
\[\dist^2(h_{k+1}, U^*) \leq \|h_{k+1}-P_{U^*}(h_k)\|^2.\]
Thus,
\begin{equation}
\begin{aligned}
\label{eq-thm-rates-korp-2}
 \dist^2(h_{k+1}, U^*) &\leq \dist^2(h_k, U^*)  - \frac{1}{2}\|h_k - u_k\|^2 - 2 \g_k \mu \dist^p(u_k, U^*) \cr
 &\quad - 2 \g_k \langle e_k^2, u_k - u^* \rangle  + 3 \beta_k^2 (\|e_k^2\|^2 +  \|e_k^1\|^2 + 2 K_1 \|e_k^1\|^{2 \alpha}). \cr
\end{aligned}
\end{equation}

Next, we estimate the term $\dist^p(u_k,U^*)$ in~\plaineqref{eq-thm-rates-korp-2}. By the triangle inequality, we have
\[\|h_k - u^*\| \leq \|u_k - h_k\| + \|u_k - u^*\| \qquad \hbox{for all }u^*\in U^*,\]
and by taking the minimum over $u^*\in U^*$ on both sides of the preceding relation, we obtain
\begin{equation}\label{eq-thm-rates-korp-3}
\dist(h_k,U^*)\le \|u_k - h_k\| +\dist(u_k,U^*).
\end{equation}

Applying Lemma~\ref{inequality-hoed-2} with $p > 0$ in equation~\plaineqref{eq-thm-rates-korp-3} yields 
\begin{equation}
\begin{aligned}
\label{eq-thm-rates-korp-4}
\dist^p(h_k,U^*) &\le (\|u_k - h_k\| +\dist(u_k,U^*))^p \cr
&\le 2^{p-1} \|u_k - h_k\|^p + 2^{p-1}   \, \dist^p(u_k,U^*).
\end{aligned}
\end{equation}
Using projection inequality~\plaineqref{eq-proj3}, and stepsizes choice~\plaineqref{stepsizes-korpelevich}, we obtain
\[ \|u_k - h_k\| \leq \|\gamma_{k} \Phi(h_{k}, \xi_k^1)\| \leq 1 . \]
Combining this result with equation~\plaineqref{eq-thm-rates-korp-4}, with $p \geq 2$, we get
\begin{equation}
\begin{aligned}
\label{eq-thm-rates-korp-5}
\dist^p(h_k,U^*) &\le 2^{p-1} \|u_k - h_k\|^{2 +(p-2)} + 2^{p-1} \dist^p(u_k,U^*) \cr
&\le 2^{p-1} \|u_k - h_k\|^2 + 2^{p-1}   \, \dist^p(u_k,U^*).
\end{aligned}
\end{equation}
By dividing the relation in~\eqref{eq-thm-rates-korp-5} with $2^{p-1}$
and by rearranging the terms, we obtain the following relation
\begin{equation}
\begin{aligned}
\label{eq-thm-rates-korp-6}
- \dist^p(u_k,U^*) &\leq \|u_k - h_k\|^2 -  2^{1-p}  \, \dist^p(h_k,U^*) .
\end{aligned}
\end{equation}
Combining the preceding inequality with \plaineqref{eq-thm-rates-korp-2}, we find that for any $k \geq 0$,
\begin{equation}
    \begin{aligned}
    \label{eq-thm-rates-korp-7}
     \dist^2(h_{k+1}, U^*) &\leq \dist^2(h_k, U^*) - 2^{2-p} \mu \gamma_k \dist^p(h_k, U^*) - \frac{1}{2} \|u_k - h_{k}\|^2 + 2 \mu \gamma_k \, \|u_k -  h_{k}\|^2  \cr
     &\quad - 2 \g_k \langle e_k^2, u_k - u^* \rangle  + 3 \beta_k^2 (\|e_k^2\|^2 +  \|e_k^1\|^2 + 2 K_1 \|e_k^1\|^{2 \alpha}).
    \end{aligned}
\end{equation}
By the choice of $\beta_k$, we have 
$\beta_k= \dfrac{2}{a(\frac{2d}{a} + k)}$, where $a = \mu \min \left\{1, \frac{1}{2 (C_F+ \sigma)}\right\}$ and $d\ge 4\mu$. Thus, for all $k\ge 0$,
\[\beta_k\le \frac{1}{d}\le \frac{1}{4\mu}\qquad\implies\qquad 2\mu\beta_k\le \frac{1}{2}.\]
By the definition of the stepsize $\gamma_k$, we always have $\gamma_k\le\beta_k$. 
Therefore, $2 \mu\g_k \leq  2 \mu\beta_k \leq \frac{1}{2}$ for all $k\ge0$,  thus implying that
\begin{align}
\label{eq-thm-rates-korp-8}
- \frac{1}{2} \|u_k - h_{k}\|^2 + 2 \mu \gamma_k \, \|u_k -  h_{k}\|^2  \leq 0.
\end{align}

Using the stochastic properties of $\xi_k$ imposed by Assumption~\ref{asum-samples}, we have for all $k \geq 0$,
\begin{align}
&\mathbb{E}[ \mathbb{E}[\gamma_k \mathbb{E}[ \langle e_k^2, u_k - u^* \rangle \mid \mathcal{F}_{k-1} \cup \{\xi_k^1\}]  \mid \mathcal{F}_{k-1}]] = 0, \cr &\mathbb{E}[\mathbb{E}[ \|e_k^2\|^2 \mid \mathcal{F}_{k-1} \cup \{\xi_k^1\}]] \leq \sigma^2,\qquad 
\mathbb{E}[\mathbb{E}[ \|e_k^1\|^2 \mid  \mathcal{F}_{k-1}]] \leq \sigma^2.
\end{align}

Moreover, since $\a \leq 1$ then the conditional expectation $\mathbb{E}[\|e_k^1\|^{2\a}|\mathcal{F}_{k-1}]$ is defined, and by Jensen inequality $\mathbb{E}[\|e_k^1\|^{2\a}|\mathcal{F}_{k-1}] \le \sigma^{2 \alpha}$ for all $k \ge 0$. Thus, by taking the total expectation  in relation~\plaineqref{eq-thm-rates-korp-7} and using an estimate from~\plaineqref{eq-thm-rates-korp-8}, we obtain 
for all $u^*\in U^*$ and for all $k\ge 0$,
\begin{equation}
    \begin{aligned}
    \label{eq-thm-rates-korp-9}
     \mathbb{E}[\dist^2(h_{k+1}, U^*)] &\leq \mathbb{E}[\dist^2(h_k, U^*)] - 2^{2-p} \mu \mathbb{E}[\gamma_k \dist^p(h_k, U^*)] + 6 \beta_k^2 (\sigma^2 + K_1 \sigma^{2 \alpha}).
    \end{aligned}
\end{equation}
The equation ~\plaineqref{eq-thm-rates-korp-9} is similar to equation ~\plaineqref{eq-thm-grad-rates-3} in the proof of Theorem~\ref{thm-proj-rates}, with the same stepsize structure. Thus, by following the same arguments from equations~\plaineqref{eq-thm-grad-rates-3} to equation~\plaineqref{eq-thm-grad-rates-8} in the proof of Theorem~\ref{thm-proj-rates}, we arrive at 
\begin{equation}
    \begin{aligned}
    \label{eq-thm-rates-korp-10}
     \mathbb{E}[\dist^2(h_{k+1}, U^*)] &\leq \mathbb{E}[\dist^2(h_k, U^*)] - 2^{2-p} \mu \beta_k \min \left\{1, \frac{1}{2 (C_F + \sigma)}\right\}\mathbb{E}[\dist^p(h_k, U^*)]  \cr
     &\quad + 6 \beta_k^2 (\sigma^2 + K_1 \sigma^{2 \alpha}),
    \end{aligned}
\end{equation}
where $C_F$ is an upperbound on $\mathbb{E}[\|F(h_k)\|]$  from the statement of Lemma~\ref{lemma-korpelevich-EF-bound}. Now let $D_k = \mathbb{E}[\dist^2(h_k, U^*)]$, and consider two cases $p = 2$ and $p > 2$.

\textbf{Case $p=2$}.

We note that by the definition of $a = \mu \min \left\{1, \frac{1}{2 (C_F+ \sigma)}\right\}$  and $d$, we have that $d\ge 4\mu$ and $\mu\ge a$, implying that $d\ge a$. Hence, for $p=2$, relation~\plaineqref{eq-thm-rates-korp-10} satisfies the conditions of Lemma~\ref{Lemma7-stich} with the following identification
\begin{align}
r_{k} = D_k, \quad a =   \mu \min \left\{1, \frac{1}{2 (C_F+ \sigma)}\right\},  \quad \alpha_k = \beta_k, \quad s_k = 0, \quad c = 6 (\sigma^2 + K_1 \sigma^{2 \alpha}).
\end{align}
Therefore, for the choice $\beta_k= \dfrac{2}{a(\frac{2d}{a} + k)}$, we get the following convergence rate for all $k \geq 1$,
\begin{align}
 D_{k+1} \leq  \dfrac{8  d^2 D_0}{a^2 k^2} + \frac{12(\sigma^2 + K_1 \sigma^{2 \alpha})}{a^2 k}.
\end{align}

\textbf{Case $p \geq 2$}.

When $p \geq 2$, by applying telescoping sum to inequality~\plaineqref{eq-thm-rates-korp-10} and rearranging the terms we obtain
\begin{align}\label{eq-thm-korp-rates-9-new}
\mathbb{E}[  2^{2 -p } a \sum_{t=0}^k   \beta_k \dist^p(h_k, U^*)] 
&\le D_0  - D_{k+1} + 6  (\sigma^2 + K_1 \sigma^{2 \alpha}) \sum_{t=0}^k \beta_k^2.  
\end{align}
Since $p \geq 2$, the function $\dist^p(\cdot, U^*)$ is convex, thus by defining $\bar{u}_k = (\sum_{t=0}^k \beta_k)^{-1} \sum_{t=0}^k \beta_k h_t$ and applying Jensen inequality be obtain
\[ (\sum_{t=0}^k \beta_k ) \mathbb{E}[\dist^p (\bar{h}_k, U^*)] \leq \mathbb{E}[ \sum_{t=0}^k   \beta_k \dist^p(h_k, U^*)] .\]
Since $p \geq 2$, by applying Jensen inequality one more time, we obtain
\[ (\bar{D}_k)^{p/2} = \left(\mathbb{E}[\dist^2(\bar{h}_k, U^*)]\right)^{p/2} \leq  \mathbb{E}\left[\left(\dist^2(\bar{h}_k, U^*)\right)^{p/2}\right] = \mathbb{E}[\dist^p(\bar{h}_k, U^*)]
.\]
\begin{align}\label{eq-thm-rates-korp-13}
(\bar{D}_k)^{p/2}  \sum_{t=0}^{k} \beta_t \leq \sum_{t=0}^{k} \beta_t   (D_t)^{p/2}
&\le \dfrac{D_0 - D_{k+1}  +  6 (\sigma^2 + K_1 \sigma^{2 \alpha}) 
 \sum_{t=0}^k \beta_t^2}{ 2^{2-p}a}.
\end{align}

Now, we use the choice for $\beta_k$, i.e., $\beta_k = \frac{b}{(k+1)^q}$, where $ 0< b < \frac{1}{2\sqrt{3}(K_0 + K_1 + K_2)}$ and $1/2 < q <1$. 
Then, the sequence $\{\beta_k\}$ satisfies the conditions of Lemma~\ref{lemma-korpelevich-EF-bound}. Furthermore, by Lemma~\ref{lemma-kq-series-rates}, we have that  for all $k\ge1$,
\begin{align}
\label{eq-thm-rates-korp-14}
\sum^{k}_{t = 0} \beta_t \geq \frac{b}{1 - q} ((k+1)^{1 - q} - 2^{1 - q}),  \qquad\qquad
\sum^{k}_{t = 0} \beta_t^2 \leq \frac{b^2}{2 q -1}.
\end{align}
Combining equations~\plaineqref{eq-thm-rates-korp-13} and ~\plaineqref{eq-thm-rates-korp-14}, and omitting $D_{k+1}$, we obtain for all $k\ge 1$,
\begin{align}\label{eq-thm-rates-korp-15}
(\bar{D}_k)^{p/2}
&\le \dfrac{ 2^{p- 2}(1 -q) \left(D_0  + 6b^2 (\sigma^2 + K_1 \sigma^{2 \alpha})  (2 \sigma^2 +1 ) / (2 q -1)\right)}{  ab \left((k+1)^{1 - q} - 2^{1 - q}\right)}.
\end{align}
Raising both sides of the preceding inequality in power $2/p$, we have that for all $k\ge1$,
\begin{align}\label{eq-thm-rates-korp-16}
\bar{D}_k
&\le \dfrac{ 2^{2(p-2) / p}(1 - q)^{2/p} \left(D_0  + 6b^2 (\sigma^2 + K_1 \sigma^{2 \alpha})  (2 \sigma^2 +1 ) / (2 q -1)\right))^{2/p}}{  (ab)^{2/p}\, \left((k+1)^{1 - q} - 2^{1 - q}\right)^{2/p}}.
\end{align}
\end{proof}


\end{document}